\documentclass{article}
\usepackage[utf8]{inputenc}
\usepackage{amsmath,amsthm,amsfonts,amssymb,amstext}

\usepackage{caption}
\usepackage{subcaption}
\usepackage{graphicx}

\usepackage{enumerate}

\usepackage{url}

\title{The journey of the union-closed sets conjecture}
\author{Henning Bruhn and Oliver Schaudt}
\date{}

\newtheorem{theorem}{Theorem}
\newtheorem{lemma}[theorem]{Lemma}
\newtheorem{observation}[theorem]{Observation}
\newtheorem{corollary}[theorem]{Corollary}

\newtheorem{conjecture}[theorem]{Conjecture}
\newtheorem{question}[theorem]{Question}

\newcommand{\emtext}[1]{\text{\em #1}}

\newcommand{\fc}{Frankl's conjecture}

\newcommand{\ucc}{union-closed sets conjecture}
\newcommand{\ucf}{union-closed family}
\newcommand{\ucfs}{union-closed families}

\newcommand{\icc}{intersection-closed sets conjecture}
\newcommand{\icf}{intersection-closed family}

\newcommand{\A}{\mathcal A}
\newcommand{\B}{\mathcal B}

\newcommand{\D}{\mathcal D}

\newcommand{\upcl}[1]{[#1)}

\newcommand{\join}{\vee}
\newcommand{\meet}{\wedge}
\newcommand{\sm}{\setminus}

\newcommand{\hord}{<}
\newcommand{\hfam}{\mathcal H^{(n)}}
\newcommand{\pow}[1]{2^{#1}}

\newcommand{\nsub}[1]{_{\nsubseteq #1}}

\newcommand{\selfcite}[1]{$\!\!${\bf\cite{#1}}}
\renewcommand{\epsilon}{\varepsilon}

\usepackage{pgfplots}
\pgfplotsset{width=7cm}

\begin{document}

\maketitle

\begin{abstract}
We survey the state of the union-closed sets conjecture.
\end{abstract}

\section{Introduction}

One of the first mentions~\cite{AMG87a} of the union-closed sets conjecture
calls it  ``a much-travelled conjecture''. This is indeed so. 
Geographically it has 
spread from Europe to at least North America, Asia, Oceania and Australia. 
Mathematically it has ventured from its origins in extremal set theory to 
lattice and graph theory.
In this survey we strive to trace its journey.

The main  attraction of the conjecture is certainly its 
simple formulation.
A
family $\A$ of sets is \emph{union-closed} if for every two
member-sets $A,B\in\A$ also their union $A\cup B$ is contained in $\A$.

\newtheorem*{ucsc}{Union-closed sets conjecture}
\begin{ucsc}
Any finite union-closed family of sets $\A\neq\{\emptyset\}$ 
has an element
that is contained in at least half of the member-sets.
\end{ucsc}

An example of a union-closed family is given in Figure~\ref{fig:sub1}, 
where we have omitted commas and parentheses. There, one may count that 
the elements $1,2,3$ appear each in only $12$ of the $25$ member-sets, 
which is less than half of the sets. 
Each of the other elements $4,5,6$ however 
is contained in  $16$ sets, more than enough for the family to satisfy 
the conjecture. Power sets are other examples of union-closed families,
and there the conjecture is tight: every element appears in exactly 
half of the member-sets.

\begin{figure}
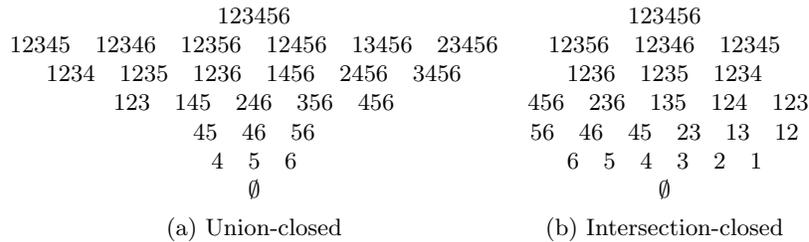

\centering
\begin{subfigure}{.6\textwidth}
  \centering\small
$123456$\\
$12345\quad 
12346\quad 12356\quad 12456\quad 
13456\quad 23456$\\
$1234\quad 1235\quad 1236\quad 
1456\quad 2456\quad 3456$\\ 
$123\quad 145\quad 
246\quad 356\quad 456$\\
$45\quad 46\quad 56$\\
$4\quad 5\quad 6$\\
$\emptyset$
  \caption{Union-closed}
  \label{fig:sub1}
\end{subfigure}%
\begin{subfigure}{.3\textwidth}
  \centering\small
$123456$\\
$12356\quad 12346\quad 12345$\\
$1236\quad 1235\quad 1234$\\
$456\quad 236\quad 135\quad 124\quad 123$\\
$56\quad 46\quad 45\quad 23\quad 13\quad 12$\\ 
$6\quad 5\quad 4\quad 3\quad 2\quad 1$\\
$\emptyset$
  \caption{Intersection-closed}
  \label{fig:sub2}
\end{subfigure}
\caption{A union-closed family and its complement}
\label{fig:firstex}
\end{figure}

Despite its apparent simplicity  
the union-closed sets conjecture remains wide open. 
This is certainly not for lack of interest -- there 
are about 50 articles dedicated to the conjecture,
as well as several websites~\cite{ucOPG,WestWeb,ucWikipedia}. 
Due to this extensive research activity, we now know that 
the conjecture is satisfied for various union-closed families $\A$. 
For instance:
\begin{itemize} \itemsep1pt \parskip0pt 
\item if $\A$ has at most $12$ elements or at most $50$ member-sets;
\item if the number $n$ of member-sets is large compared
to the number $m$ of elements, that is, when $n\geq \tfrac{2}{3}2^m$;
\item if $n$ is small compared to $m$: when $n\leq 2m$
(where we need to assume that $\A$ is \emph{separating}, 
that is, for any two elements there exists a member-set
containing exactly one of them);
\item if $\A$ contains one of a number of subconfigurations, 
such as a singleton-set;
\item or if $\A$ has a particular structure, 
for instance, if $\A$ may be represented by a lower semimodular 
lattice, or by a subcubic graph. 
\end{itemize}
We will discuss all these results, and give proper attributions,
in the course of the article.
All these partial results notwithstanding, we still seem to be far from a
proof of the conjecture, and this is even the case for 
the obvious relaxation in which we settle for 
an element that appears in only, say, $\geq1\%$ of the member-sets. 
 The best result in this respect is 
an observation by Knill (slightly improved by W\'ojcik) 
that yields always an element of 
frequency at least $\tfrac{n-1}{\log_2n}$.

\medskip
In an article~\cite{AMG87b} of 1987, Peter Winkler\footnote{
Winkler informed us that the article was never intended 
to be published. Rather, this is the case of 
an informal letter ending up in print without Winkler even 
knowing. 
} wrote 
``the `union-closed sets conjecture' is well known indeed, except for (1) its
origin and (2) its answer!''
While the answer remains elusive, we can shed some light on its origins. 

Most authors today attribute the conjecture to Peter Frankl, 
and following Frankl~\cite{Frankl} date it to 1979. 
The sole exception are  
Balla, Bollob\'as and Eccles~\cite{BBE13}, who 
call it a ``folklore conjecture'' 
that ``was well known by the mid-1970s''.
We cannot resolve this conflict of attribution, nor do we have the intention to do so.
However, there is no doubt that Frankl did discover the conjecture
(whether he was not the first is for others to decide)
and that he played an instrumental role in popularising it. 
Consequently, we will sometimes speak of \emph{Frankl's conjecture}.

In late 1979, Frankl~\cite{FraMail} was working on traces of finite sets, 
a work that culminated in his article~\cite{Fra83} of~1983. 
Motivated by the observation that it could be used to 
improve a number of bounds, Frankl formulated the conjecture 
when travelling from Paris to Montreal. 
On his way, Frankl told the conjecture to Ron Graham, who  disseminated
it widely. 
In about 1981, Dwight Duffus learnt about it, which then led to 
its first appearance in print:
the proceedings of a workshop held in 1984 in Banff, edited by Rival~\cite{Riv85},
contain a short report of Duffus on a ``problem of P.~Frankl''.  
The second mention is Stanley~\cite{Sta86}, which simply cites Rival. 

The next time the conjecture  appeared in print, it had apparently travelled
with Franz Salzborn from Europe to Australia.
An article of 1987 in the Australian Mathematical Society Gazette~\cite{AMG87a}
reports on the Annual Meeting of the society during which Jamie Simpson
publicised the conjecture. We may only speculate that this is how 
the conjecture arrived in Papua New Guinea, 
where Renaud and Sarvate went on to write the first 
published research articles
about it~\cite{SR89,SR90,Ren91} in 1989--1991. They were succeeded in 1992
by W\'ojcik~\cite{Woj92} in Poland
and, in the USA, by Poonen~\cite{Poo92}, who wrote his influential article when he was an 
undergraduate. Many others followed in subsequent years.

\medskip
In this survey,
we aim to give a complete review of the literature on the conjecture. 
While we tried to track down every  article with a substantial connection
to the conjecture, we were not entirely successful as we could not 
obtain an unpublished manuscript of 
Zagaglia Salvi~\cite{Salvi} that, as W\'ojcik~\cite{Woj92} writes, apparently 
contains reformulations of the conjecture. 

The focus of this survey is on the methods  employed 
to attack the conjecture. Our treatment of the literature is therefore somewhat 
uneven. Whenever we can identify a technique that, to our eyes, seems
interesting and potentially powerful we discuss it in greater detail.
\section{Elementary facts and definitions}\label{sec:elementary}

We quickly settle some notation and mention the most elementary facts. 
Let $\mathcal A$ be a  family of sets. We call 
the set $U(\mathcal A):=\bigcup_{A\in\A}A$ of all the elements
that appear in some member-set of $\A$ the \emph{universe of $\A$}.
If $\A$ is union-closed then taking the complements of 
all member-sets results in a family $\mathcal D=\{U(\mathcal A)\sm A:A\in\A\}$
that is \emph{intersection-closed}: if $C,D\in\D$ then also $C\cap D\in\D$.

The union-closed sets conjecture has the following equivalent
form for inter\-section-closed families.

\newtheorem*{icsc}{Intersection-closed sets conjecture}
\begin{icsc}
Any finite intersection-closed family of at least two sets  
has an element
that is contained in at most half of the member-sets.
\end{icsc}

Continuing with notation, we denote by 
\[
\A_x:=\{A\in\A:x\in A\}.
\]
the subfamily of member-sets containing any given element $x\in U(\A)$.
The cardinality $|\A_x|$ is the \emph{frequency of $x$} in $\A$. 
We also introduce notation for the complement of $\A_x$:
\[
\A_{\overline x}:=\A\sm\A_x=\{A\in\A:x\notin A\}.
\]
We point out that, if $\A$ is union-closed, 
both $\A_x$ and $\A_{\overline x}$ are union-closed as well.

With this terminology, 
the union-closed sets conjecture  states that in every (finite) union-closed family~$\A$
there is an $x\in U(\A)$ with 
\(
|\A_x|\geq\tfrac{1}{2}|\A|. 
\)
We will call such an element $x$  \emph{abundant}. 
When we consider an intersection-closed family $\mathcal D$, 
the intersection-closed sets conjecture asserts the existence of an element $y\in U(\D)$ 
with $|\D_y|\leq\tfrac{1}{2}|\D|$. Such a $y$ is \emph{rare} in $\D$.
(We realise that this leads to the slightly bizarre situation that 
an element with frequency $|\A_x|=\tfrac{1}{2}|\A|$ is at the same 
time abundant and rare.)

As Poonen~\cite{Poo92} observed,  the union-closed sets conjecture becomes 
false if the family is allowed to have infinitely many member-sets. 
Indeed, the union-closed family consisting of the sets
$\{i,i+1,i+2,\ldots\}$ for every positive integer $i$ has infinitely many
member-sets but no element has infinite frequency. 
As a consequence, we will tacitly presuppose that every union-closed family
considered in this survey has only finitely many member-sets. 

Additionally, we will 
always require the universe to be finite as well. 
This is no restriction. If, for a union-closed family $\A$, 
the universe has infinite cardinality there 
will be infinitely many pairs of elements $x$ and $y$ in the 
universe of  $\A$ that cannot be separated by $\A$, in the sense 
that $x\in A$ if and only if $y\in A$ for all $A\in\A$.
In that case, we may simply 
delete $y$ from all member-sets of $\A$. This results again in a union-closed
family that satisfies the union-closed sets conjecture if and only 
if $\A$ does. Consequently, it suffices to prove the conjecture for 
\emph{separating} families $\A$, those in which, for any two distinct  
elements $x,y\in U(\A)$, there is an $A\in\A$ that contains exactly one of $x,y$.
It is an easy observation that the universe of any (finite) 
separating family is finite.

We remark furthermore that, if necessary, we may always assume 
a union-closed family to include the empty set as a member. Adding $\emptyset$
will at most increase the number of sets, while obviously the frequency of any
element stays the same. In the case of an intersection-closed family $\D$, 
it is no restriction to suppose that $\emptyset,U(\D)\in\D$. 
Indeed, adding $U(\D)$ to $\D$ makes satisfying the intersection-closed sets conjecture
only harder, while $\emptyset$ is always a member-set of $\D$ unless there 
is an element $x$ appearing in every set of $\D$. In that case, 
deleting $x$ from every member results in an intersection-closed family
that satisfies the conjecture if and only if $\D$ does.

Given a family $\mathcal S$ of sets, the \emph{union-closure} 
of $\mathcal S$ is the \ucf~$\A$ defined by
\[
\A = \big\{\bigcup_{S\in \mathcal S'}S : \mathcal S' \subseteq \mathcal S\big\}.
\]
We may also say that $\A$ is \emph{generated} by $\mathcal S$.

Every \ucf~$\A$ has a unique subset $\B \subseteq \A$ such 
that (a) $\A$ is the union-closure of $\B$ and (b) 
$\B$ is inclusionwise minimal with this property. 
Observe that $\B$ is simply the subfamily of 
non-empty sets $B \in \A$ with the property that 
if $B = X \cup Y$ for some $X,Y \in \A$, then $X=B$ or $Y=B$.
The sets in $\B$ are the \emph{basis sets} of $\A$.
Observe that $\A\sm\{B\}$ is union-closed for $B\in\A$
if and only if $B$ is a basis set (or $B=\emptyset$).

Finally, for $i,n\in\mathbb N$ we use the notation $[n]$ to denote 
$\{1,\ldots, n\}$ and $[i,n]$ for the set $\{i,i+1,\ldots, n\}$. 
We write $\pow{X}$ for the power set of a set $X$. 
Any set of cardinality $k$ is a \emph{$k$-set}.
For a set $X$ and an element $x$, we often write $X+x$ for $X\cup\{x\}$
and $X-x$ for $X\sm\{x\}$.

\section{The many faces of the conjecture}

The union-closed sets conjecture has several equivalent reformulations
that each highlight a different aspect.
In this section we present three reformulations, one in 
terms of lattices, one in the language of graphs and the last 
again in terms of sets. 
That the same problem can be posed quite naturally in such different fields 
is a clear indication that Frankl's question is a very basic and fundamental one.
 
The reformulations also help us to gain confidence in the veracity of the conjecture. 
Indeed, each offers natural special cases such as semimodular lattices or subcubic graphs
that would appear quite artificial in the other formulations. Proving the conjecture
for such special cases then clearly adds evidence in support of the conjecture. 
Finally, each reformulation opens up new tools and techniques to attack the conjecture.

\subsection{The lattice formulation}

Already in its earliest mention~\cite{Riv85}
it is recognised that the union-closed sets conjecture,
or rather its twin, the intersection-closed sets conjecture, has an equivalent
formulation in terms of lattices. In fact, any intersection-closed\footnote{
Or union-closed family, for that matter. However, it seems customary
in the lattice context to consider intersection-closed families. 
} 
family together with inclusion forms a lattice.

We recall a minimum of lattice terminology.
A \emph{finite lattice} is a finite poset $(L,\leq)$ in which every pair $a,b\in L$ of elements
has a unique greatest lower bound, denoted by $a\meet b$ (the \emph{meet}), and a unique
smallest upper bound, denoted by $a\join b$ (the \emph{join}). 
All the lattices considered in this survey will be finite. 
The unique minimal element is denoted by~$0$, the unique 
maximal element is~$1$.
A non-zero element $a\in L$ 
is \emph{join-irreducible} if $a=b\join c$ implies $a=b$ or $a=c$.
We write $\upcl{a}:=\{x\in L: x\geq a\}$. For more 
on lattices see, for instance, Gr\"atzer~\cite{Graetzer03}.

\medskip
Let us first see that an intersection-closed family $\A$ 
defines a lattice
in a quite direct way.
This is illustrated in Figure~\ref{firstlattice},
which shows the lattice corresponding 
to the family of Figure~\ref{fig:sub2}. 
As pointed out in the previous section, we may assume
that $\A$ contains its universe $U(\A)$.
Then $(\A,\subseteq)$ is a lattice. 
Indeed, the unique greatest lower bound of any $A,B\in\A$
is $A\meet B=A\cap B\in\A$, while $U(\A)\in\A$ guarantees that $A$ and $B$
always have a minimal upper bound. Such a minimal upper bound is unique:
If $R$ and $S$ are two upper bounds then also $R\cap S\in\A$ is an upper bound.
Let us note that while  $A\join B$ 
always contains $A\cup B$, it is usually larger. 

\begin{figure}[ht]
\centering
\includegraphics[scale=1]{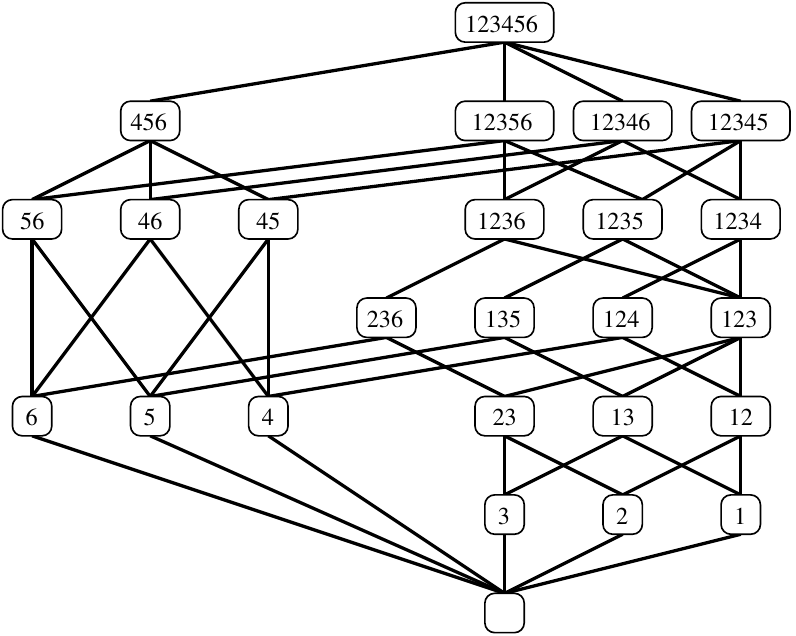}
\caption{The lattice of the set system in Figure~\ref{fig:firstex}. The
join-irreducible elements are 
precisely $\{1\},\{2\},\{3\},\{4\},\{5\},\{6\}$.}\label{firstlattice}
\end{figure}

We now state the lattice formulation of Frankl's conjecture:
\begin{conjecture}\label{latticeconjup}
Let $L$ be a finite lattice with at least two elements. Then there
is a join-irreducible element $a$ with $|\upcl a|\leq \tfrac{1}{2}|L|$.
\end{conjecture}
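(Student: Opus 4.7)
The plan is to proceed by induction on $|L|$. The base case $|L|=2$ is immediate: the unique non-zero element is join-irreducible, and its up-set has exactly one element, which equals $\tfrac{1}{2}|L|$.

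For the inductive step, a first natural attempt is to pass to a smaller sublattice. Let $c$ be a coatom of $L$ (an element covered by $1$). The principal ideal $\{x \in L : x \leq c\}$ is itself a lattice of size strictly less than $|L|$, and any element that is join-irreducible inside this ideal remains join-irreducible in $L$, since a decomposition $a = b \join d$ in $L$ with $a \leq c$ forces $b,d \leq c$. Apply induction to the ideal to obtain a join-irreducible $a \leq c$ whose up-set inside the ideal has size at most half of the ideal. It then remains to bound $|\upcl{a}|$ in the full lattice $L$ by $\tfrac{1}{2}|L|$, which requires controlling the elements of $\upcl{a}$ lying outside the ideal (those incomparable with $c$, together with $1$).

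An alternative approach is to average over the set $J(L)$ of join-irreducible elements. Double-counting pairs $(a,x)$ with $a \in J(L)$ and $a \leq x$ yields
\[
\sum_{a \in J(L)} |\upcl{a}| \;=\; \sum_{x \in L} |J(L) \cap \downcl{x}|.
\]
If the right-hand side is at most $\tfrac{1}{2}|J(L)| \cdot |L|$, then some join-irreducible satisfies the desired bound. Concretely this would follow from showing that, on average over $x \in L$, at most half of the join-irreducibles lie below $x$.

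I expect the main obstacle to be that both attempts fail in full generality. In the induction, the elements of $\upcl{a}$ lying outside the principal ideal below $c$ are not controlled by the hypothesis, and no uniform bound on their number is evident from the lattice axioms alone. The averaging strategy is also known to fail: there exist finite lattices in which the average of $|\upcl{a}|$ over $a \in J(L)$ strictly exceeds $\tfrac{1}{2}|L|$, so no argument based purely on averaging can work. Consequently, a successful proof would require a substantially more delicate structural argument --- presumably exploiting specific lattice features such as semimodularity, relative complementation, or forbidden sublattices --- and I would realistically expect such an approach to yield only partial results for restricted classes rather than the full conjecture.
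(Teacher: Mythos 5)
The statement you were asked to prove is a \emph{conjecture} --- it is the lattice reformulation of Frankl's union-closed sets conjecture, which remains open. The paper does not (and cannot) prove it; it only proves the statement is \emph{equivalent} to the union-closed sets conjecture, and then verifies it for restricted lattice classes (distributive, modular, lower semimodular, geometric, complemented). So there is no ``paper's own proof'' to measure your attempt against, and your closing diagnosis --- that a full proof is out of reach and only partial results for restricted classes should be expected --- is exactly right.

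Your exploration of why the naive approaches fail also matches what is known. Your averaging identity
\[
\sum_{a \in J(L)} |\upcl{a}| \;=\; \sum_{x \in L} |J(L) \cap \downcl{x}|
\]
is correct, and you are right that it cannot close the argument: the paper (Section on limits of averaging) recalls the construction of Cz\'edli, Mar\'oti and Schmidt of a union-closed family on each universe $[m]$ with $\lfloor\tfrac{2}{3}2^m\rfloor$ member-sets whose average frequency is strictly below the Frankl threshold; on the lattice side this produces lattices where the mean of $|\upcl{a}|$ over join-irreducibles exceeds $\tfrac{1}{2}|L|$, so no argument resting purely on averaging can work. Your inductive attempt via a coatom $c$ is also the natural first try, and your identification of the obstacle --- the elements of $\upcl{a}$ incomparable with $c$ are not controlled by the hypothesis --- is accurate; there is no uniform bound coming from the lattice axioms alone. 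Interestingly, the one general-purpose positive result the paper presents, Reinhold's theorem for lower semimodular lattices, is exactly a refined version of your idea: one fixes a coatom $b$ and a join-irreducible $a\nleq b$ and shows $x\mapsto x\meet b$ is an injection $\upcl{a}\to L\sm\upcl{a}$, where lower semimodularity is precisely what makes the injectivity argument go through. So the ``substantially more delicate structural argument'' you anticipated is indeed what the known partial results supply, and no gap is being papered over in your reasoning --- the assigned statement simply has no proof to give.
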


Let us see why Conjecture~\ref{latticeconjup} is equivalent to the intersection-closed
sets conjecture.
Let $\A$ be an intersection-closed family containing its universe and
consider the lattice $(\A,\subseteq)$.
Assume Conjecture~\ref{latticeconjup} to hold, that is, there is a join-irreducible
$J\in\A$ with $|\upcl J|\leq\tfrac{1}{2}|\A|$. 
Suppose that every element of $J$ appears in some proper subset of 
$J$ that is in $\A$:
$\bigcup_{A\subset J}A=J$. Then, $\bigvee_{A\subset J}A\supseteq \bigcup_{A\subset J}A=J$,
from which follows that $\bigvee_{A\subset J}A=J$, which is impossible as $J$ 
is join-irreducible. Thus
there is an $x\in J$ that does not lie in any proper subset of~$J$.

Next, consider an $A\in\A$ containing $x$. Then $J\cap A$ is a subset 
of $J$ containing $x$ and therefore equal to $J$.
In particular, $J\subseteq A$ and thus $A\in\upcl J$. 
Since    $|\upcl J|\leq\tfrac{1}{2}|\A|$, it follows 
that $x$ appears in at most half of the member-sets of~$\A$.

For the other direction, consider a lattice $L$ and 
associate to every $x\in L$ the set $S(x)$ of join-irreducible 
elements $z$ with $z\leq x$. Then, for $x,y\in L$ we obtain 
that $S(x\meet y)=S(x)\cap S(y)$, and thus the family $\A=\{S(x):x\in L\}$
is intersection-closed. Moreover, $|\A|=|L|$.

Supposing that the intersection-closed sets conjecture holds, we
obtain a join-irreducible $x\in L$ that is contained in at most half
of the member-sets of $\A$. Then for any $y\geq x$, it follows that 
$x\in S(y)$ and thus $|\upcl x|$ is bounded by the number of member-sets
of $\A$ containing $x$, which gives $|\upcl x|\leq\tfrac{1}{2}|L|$.

\begin{theorem}
Conjecture~\ref{latticeconjup} is equivalent to the union-closed 
sets conjecture.
\end{theorem}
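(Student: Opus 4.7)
The plan is to leverage the already-established equivalence between the union-closed and intersection-closed sets conjectures, so that it suffices to prove Conjecture~\ref{latticeconjup} equivalent to its intersection-closed counterpart. Both directions follow the set/lattice correspondence informally sketched above, and my proposal is simply to turn that sketch into a clean proof.

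Forward direction (lattice $\Rightarrow$ intersection-closed). Let $\A$ be an intersection-closed family; by the reductions in Section~\ref{sec:elementary} we may assume $U(\A)\in\A$, so $(\A,\subseteq)$ is a finite lattice with meet given by intersection. Apply Conjecture~\ref{latticeconjup} to obtain a join-irreducible $J\in\A$ with $|\upcl J|\leq\tfrac12|\A|$. Two short steps remain. First, locate $x\in J$ that lies in no proper $\A$-subset of $J$: if every element of $J$ were covered by some proper $A\in\A$ with $A\subsetneq J$, then $J$ would equal $\bigcup_{A\subsetneq J,\,A\in\A}A$, and hence also $\bigvee_{A\subsetneq J,\,A\in\A}A$, contradicting join-irreducibility. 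Second, for any $A\in\A$ with $x\in A$, the set $J\cap A$ lies in $\A$, contains $x$, and is contained in $J$; by choice of $x$ this forces $J\cap A=J$, i.e.\ $J\subseteq A$. Hence $\A_x\subseteq\upcl J$ and $x$ is rare.

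Backward direction (intersection-closed $\Rightarrow$ lattice). Given a finite lattice $L$, assign to each $x\in L$ the set $S(x)$ of join-irreducibles $z\leq x$, and put $\A=\{S(x):x\in L\}$. From $z\leq x\meet y\iff z\leq x\text{ and }z\leq y$ one reads off $S(x\meet y)=S(x)\cap S(y)$, so $\A$ is intersection-closed. The delicate point is the injectivity claim $|\A|=|L|$: it relies on the standard fact that in a finite lattice every element is the join of the join-irreducibles below it, so $S(x)=S(y)$ forces $x=\bigvee S(x)=\bigvee S(y)=y$. Applying the intersection-closed sets conjecture to $\A$ yields a join-irreducible $a\in L$ with $|\A_a|\leq\tfrac12|\A|$. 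Since $a\in S(y)\iff a\leq y\iff y\in\upcl a$, we conclude $|\upcl a|=|\A_a|\leq\tfrac12|L|$.

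The main obstacle, such as it is, lies in the backward direction: one must explicitly invoke the lattice-theoretic fact that $x=\bigvee S(x)$ to secure $|\A|=|L|$, without which the translation between frequencies in $\A$ and sizes of up-sets in $L$ collapses. The forward direction has a parallel subtlety in extracting the distinguished element $x\in J$ from join-irreducibility, but once that is in hand the rest is routine bookkeeping.
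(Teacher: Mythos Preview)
Your proposal is correct and follows essentially the same argument as the paper's own proof: the same lattice $(\A,\subseteq)$ in the forward direction with the same extraction of $x\in J$ from join-irreducibility, and the same map $x\mapsto S(x)$ in the backward direction. You are in fact slightly more careful than the paper in explicitly justifying $|\A|=|L|$ via $x=\bigvee S(x)$, which the paper states without proof.
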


In view of this equivalence we will say that a lattice 
\emph{satisfies Frankl's conjecture} if Conjecture~\ref{latticeconjup}
holds for it. To include the trivial case, we will extend
this to any lattice on less than two elements.

What are the advantages of the lattice formulation? In some sense, 
Frankl's conjecture is stripped down to its bare essential parts: the 
elements have vanished and all that counts is the inclusion relation 
between the sets. Moreover, in comparison with the 
set formulation new special cases become natural -- and
attackable. We will review them next.

\subsection{Lattice results}\label{sec:latticeresults}

The formulation of the lattice version resulted in a series
of verified special cases of Frankl's conjecture. Already in Rival~\cite{Riv85}
it is mentioned, without proof, that the conjecture holds for 
distributive and geometric lattices. This was explicitly proved by 
Poonen~\cite{Poo92}, who also extended the latter case to complemented 
lattices. 

Abe and Nakano~\cite{AN98} showed 
the conjecture for modular lattices, a case that includes
distributive lattices. This, in turn, was generalised by Reinhold~\cite{Rei00} to 
lower semimodular lattices.
We present the proof here, as it seems
to be the strongest result concerning lattice classes, and also because
the proof is nice and succinct.

Let $x<y$ be two elements of a lattice. Then $x$ is a \emph{lower cover} of $y$ 
if $x\leq z\leq y$ implies $x=z$ or $y=z$ for all elements $z$.
A lattice $L$ is \emph{lower semimodular} if 
$a\meet b$ is a lower cover of $a\in L$, 
whenever 
$b\in L$ is a lower cover
of $a\join b$.

\begin{theorem}[Reinhold~\cite{Rei00}]\label{thm:Reinhold}
Lower semimodular lattices satisfy Frankl's conjecture.
\end{theorem}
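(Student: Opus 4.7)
My plan is to produce a join-irreducible $a \in L$ together with a coatom $c$ satisfying $c \not\geq a$, and then to show that the map $\phi : \upcl{a} \to L$ defined by $\phi(x) := x \meet c$ is an injection whose image lies in $L \setminus \upcl{a}$. Establishing this will give $|\upcl{a}| \leq \tfrac{1}{2}|L|$ at once.

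To find the pair $(a,c)$, I would first dispose of the trivial case: if $1$ is join-irreducible, set $a = 1$ and we are done with $|\upcl{a}| = 1$. Otherwise $1$ has at least two coatoms; writing $1$ as the join of the join-irreducibles of $L$ rules out every join-irreducible lying below every coatom (otherwise $1$ would sit below the meet of the coatoms, which is strictly less than $1$), so some join-irreducible $a$ fails to lie below some coatom $c$. With $c \not\geq a$ secured, $\phi(x) \leq c$ automatically prevents $\phi(x) \geq a$, so $\phi$ does land in $L \setminus \upcl{a}$.

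The substantive step is injectivity, and this is where lower semimodularity enters. Given $y, y' \in \upcl{a}$ with $\phi(y) = \phi(y') = w$, one first observes that neither $y$ nor $y'$ can lie below $c$, since $a \leq y \leq c$ would contradict $c \not\geq a$. Hence $y \join c > c$, and because $c$ is a coatom this forces $y \join c = 1$. Lower semimodularity now applies: $c$ is a lower cover of $y \join c = 1$, so $w = y \meet c$ is a lower cover of $y$, and symmetrically of $y'$. Since $a \not\leq w$ we have $w < w \join a \leq y$, so the covering $w \prec y$ forces $w \join a = y$; the same argument for $y'$ gives $w \join a = y'$, whence $y = y'$.

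The main technical hinge is the reduction to a \emph{single} application of lower semimodularity, made possible by choosing $c$ to be a coatom so that $y \join c$ is automatically at the top of the lattice. The awkward direction of the axiom — covers propagate upward from $a \meet b$ to $a \join b$, not the other way — would make iterated application along a chain much more delicate, so the short existence lemma in the first step (producing a coatom not above $a$) is, in my view, the indispensable ingredient that unlocks the one-step argument.
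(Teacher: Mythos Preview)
Your proof is correct and follows essentially the same route as the paper's: pick a coatom $c$ (the paper calls it $b$) and a join-irreducible $a\nleq c$, then show that $x\mapsto x\meet c$ injects $\upcl{a}$ into $L\setminus\upcl{a}$, with the key step being that lower semimodularity makes $x\meet c$ a lower cover of $x$ whenever $x\geq a$. The only cosmetic differences are that the paper's existence argument is shorter (just pick any coatom $b$; since $1$ is the join of all join-irreducibles and $b<1$, some join-irreducible $a$ must satisfy $a\nleq b$ --- no need to discuss the number of coatoms), and the paper's injectivity argument concludes via $a\leq x\meet y=x\meet b\leq b$ rather than your $w\join a=y=y'$, but these are equivalent endgames from the same covering relation.
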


\begin{proof}
Let $L$ be a lower semimodular lattice with $|L|\geq 2$. 
If the unique largest element $1\in L$
is join-irreducible then Frankl's conjecture is trivially satisfied. 
If not, we may pick a lower cover $b\in L$ of $1$, and a join-irreducible $a\in L$
with $a\nleq b$. Then $1=a\join b$.

We claim that the function $\upcl{a}\to L\sm\upcl{a}$,
$x\mapsto x\meet b$ is an injection, which then finishes the proof. So, suppose
that there are two distinct $x,y\in\upcl{a}$ with $x\meet b=y\meet b$. 
As either $x\meet y < x$ or $x\meet y< y$, we may assume the former. 
This implies 
\begin{equation}\label{covers}
x\meet b = x\meet y\meet b\leq x\meet y < x.
\end{equation}
Now, as $L$ is lower semimodular, and as $b$ is a lower cover of $1=x\join b$, we
obtain that $x\meet b$ is a lower cover of $x$. Thus, $x\meet b=x\meet y$
by~\eqref{covers} and therefore
\[
a\leq x\meet y = x\meet b \leq b,
\] 
which contradicts our choice of $a\nleq b$.
\end{proof}

Theorem~\ref{thm:Reinhold} was also independently proved by Herrmann and Langsdorf~\cite{HL99}
and by Abe and Nakano~\cite{AN00}. In the latter article, the conjecture
is also verified for a superclass, lower quasi-semimodular lattices.

If there are lower semimodular lattices there are clearly \emph{upper 
semimodular} ones as well. However, this class seems to be much harder 
with respect to Frankl's conjecture. 
Already in Rival~\cite{Riv85} it is mentioned, without proof, that \emph{geometric lattices}
satisfy the conjecture. A proper proof was later given by Poonen~\cite{Poo92}. 
A lattice is geometric, and then 
upper semimodular, if it may be represented as the lattice of flats of a matroid.
Abe~\cite{Abe00} 
treats another subclass, the so called \emph{strong} upper semimodular
lattices. Cz\'edli and Schmidt~\cite{CS08} show the conjecture for upper 
semimodular lattices $L$ that are large, in the sense that 
$|L|>\tfrac{5}{8}2^m$ where $m$ is the number of join-irreducible 
elements; they also consider planar upper semimodular lattices.

Let us mention that it is an easy consequence of the lattice formulation
that, for any lattice $L$, Frankl's conjecture holds for $L$ or for its 
dual $L^*$, or both. (The dual lattice is obtained by reversing the order.) 
Duffus and Sands~\cite{DS99} and Abe~\cite{Abe02} derive stronger assertions for
special classes of lattices.

\medskip
We close this section with a wonderful application of Reinhold's theorem 
that was indicated to 
us by one of the anonymous referees.
The application concerns \emph{graph-generated}  
intersection-closed families. 
Let $G$ be a fixed graph. For every set $X\subseteq V(G)$ we write 
$E_X$ for the set of edges of $G$ that have both their endvertices in $X$. 
Then $\{E_X:X\subseteq V(G)\}$ is intersection-closed.

\begin{theorem}[Knill~\cite{Kni94}]
Given a graph $G=(V,E)$ with at least one edge, 
the \icf~$\{E_X:X\subseteq V\}$ satisfies the \icc.
\end{theorem}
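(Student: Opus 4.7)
The plan is to recast the statement via the lattice equivalence of Section~3.1. The intersection-closed family $\mathcal D = \{E_X : X \subseteq V\}$ is a lattice under inclusion (with meet $E_X \wedge E_Y = E_{X\cap Y}$), and every $F \in \mathcal D$ has the canonical presentation $F = E_{V(F)}$, where $V(F)$ is the set of vertices incident to some edge of $F$. When $|F|\ge 2$ one has $|V(F)|\ge 3$ and $F = \bigvee_{v \in V(F)} E_{V(F)\setminus\{v\}}$ is a join of strictly smaller elements of $\mathcal D$, so the join-irreducibles of $\mathcal D$ are precisely the singleton edge-sets $\{e\}$, $e\in E$. By the lattice reformulation (Conjecture~\ref{latticeconjup}) what must be shown is that some edge $e=uv$ satisfies $|\{X\in\mathcal S : u,v\in X\}|\le\tfrac12|\mathcal S|$, where $\mathcal S = \{X \subseteq V : G[X]\text{ has no isolated vertex}\}$ is in bijection with $\mathcal D$ via $X \mapsto E_X$.

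I would first ask whether $\mathcal D(G)$ is itself lower semimodular, so that Theorem~\ref{thm:Reinhold} applies directly. A look at $C_4$ is discouraging: for $a=\{12,23\}$ and $b=\{34,14\}$ one checks that $b$ is a lower cover of $a\vee b = E$, yet $a\wedge b=\emptyset$ is strictly below both $\{12\}$ and $\{23\}$ and hence not a lower cover of $a$. So any use of Reinhold has to go via an auxiliary lattice built from $G$, not through $\mathcal D(G)$ itself.

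As a fallback I would set up an induction on $|V(G)|$, assuming $G$ has no isolated vertex. The easy case is when $G$ has a leaf $v$ with unique neighbour $u$: then $v\in X\in\mathcal S$ forces $u\in X$, so the frequency of the edge $uv$ equals $|\mathcal S_v|$, and I would build an injection $\phi\colon\mathcal S_v\to\mathcal S_{\bar v}$ by
\[
\phi(X) = \begin{cases} X\setminus\{v\} & \text{if $u$ has a neighbour in $X$ other than $v$,}\\ X\setminus\{u,v\} & \text{otherwise.}\end{cases}
\]
In the ``otherwise'' case no vertex of $X\setminus\{u,v\}$ is adjacent to $u$ (since $u$'s only neighbour in $X$ is $v$), and the leaf property rules out adjacency to $v$, so $X\setminus\{u,v\}\in\mathcal S$. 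The two cases target the disjoint subsets of $\mathcal S_{\bar v}$ distinguished by whether $u$ is present, and each is manifestly invertible, so $\phi$ is injective and $|\mathcal S_v|\le|\mathcal S_{\bar v}|$, exhibiting $uv$ as rare.

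The main obstacle will be the leafless case, where $G$ has minimum degree at least $2$: removing any single vertex can isolate several others simultaneously, so the injection above breaks down, and $\mathcal D(G)$ is not generally lower semimodular. Here I would either search for an auxiliary lower semimodular lattice built from $G$ on which Theorem~\ref{thm:Reinhold} can be brought to bear --- presumably the route the paper alludes to when it speaks of a ``wonderful application'' of Reinhold --- or try to lift a rare edge of $\mathcal D(G-v)$, furnished by induction on a suitably chosen $v$, back to $\mathcal D(G)$ by bounding the frequency inside the subfamily $\mathcal S_v^*(G) = \mathcal S(G)\setminus\mathcal S(G-v)$. Identifying the right auxiliary structure and making either strategy go through is the step I expect to require the most thought.
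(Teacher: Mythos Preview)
Your $C_4$ counterexample is correct, and it actually refutes the paper's own argument. The paper does \emph{not} pass to an auxiliary lattice: it asserts directly that $\mathcal D(G)=\{E_X:X\subseteq V\}$ is lower semimodular and then invokes Theorem~\ref{thm:Reinhold}. The lower-cover characterisation it proves---``$E_X$ is a lower cover of $E_Y$ iff $Y=X+y_1$ or $E_Y=E_X+y_1y_2$''---is established only for \emph{proper} $X,Y$ (those with $G[X]$ free of isolated vertices), but is then applied in the semimodularity check with $X=A\cap B$, which need not be proper even when $A$ and $B$ are. Your example lands precisely on this gap: with $A=\{1,2,3\}$ and $B=\{1,3,4\}$ in $C_4$ one has $A\cup B=B+2$, yet $A\cap B=\{1,3\}$ is independent, so $E_{A\cap B}=\emptyset$ is not a lower cover of $E_A=\{12,23\}$. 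Thus $\mathcal D(C_4)$ is genuinely not lower semimodular, and the paper's route to Knill's theorem is broken. The theorem itself stands---Knill's original argument and the reproof of Llano et~al.\ do not go via semimodularity---but the ``wonderful application'' of Reinhold does not go through as written.

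That said, your own proposal is incomplete. The leaf case and its injection are fine, but the leafless case, where every vertex has degree at least~$2$, is the entire difficulty and you leave it open. Neither of your two suggested strategies is fleshed out: there is no candidate auxiliary lower semimodular lattice (and, per the above, the obvious one fails), and the inductive lifting of a rare edge from $G-v$ faces exactly the obstacle you name, that deleting $v$ may isolate several vertices at once and leave $|\mathcal S(G)\setminus\mathcal S(G-v)|$ uncontrolled relative to any single edge. A complete proof needs a new idea here---for instance a direct injection on $\mathcal S$ keyed to a carefully chosen edge, as in Knill's original treatment---rather than a reduction to Reinhold.
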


This result is also part of Knill's PhD thesis~\cite{Kni91}.
The theorem was later restated as a conjecture by El-Zahar~\cite{EZ97}, and, as a 
response to El-Zahar's paper, 
reproved by Llano, Montellano-Ballesteros, Rivera-Campo 
and Strausz~\cite{LMRS08}.

As $L=\{E_X:X\subseteq V(G)\}$ is intersection-closed, it is a lattice 
with respect to $\subseteq$. We show that $L$ is lower semimodular.
Thus, Knill's theorem becomes a consequence of  Theorem~\ref{thm:Reinhold}. 

We call $X\subseteq V(G)$ \emph{proper} if $E_X\neq E_{X'}$
for any $X'\subsetneq X$. 
Note that $L=\{E_X:X\subseteq V(G) \mbox{ and $X$ is proper}\}$, and so we may restrict our attention to proper vertex sets.
Let $X,Y\subseteq V(G)$ be proper. 
First we note that 
\[
E_X\meet E_Y = E_X\cap E_Y = E_{X\cap Y}
\emtext{ and } E_X\join E_Y = E_{X\cup Y}.
\]
Next we observe that $E_X$ is a lower cover of $E_Y$ if and only if
\[
\emtext{
$Y=X+y_1$  or $E_Y=E_X+y_1y_2$ for some $y_1,y_2\in Y\sm X$.
}
\]
Indeed, let $E_X$ be a lower cover of $E_Y$ and consider an edge $y_1y_2\in E_Y\sm E_X$.
 Then, $E_X\subsetneq E_{X\cup\{y_1,y_2\}}
\subseteq E_Y$ and thus $Y=X\cup\{y_1,y_2\}$. Now, if one of $y_1,y_2$, $y_2$ say, is contained
in $X$ we have $Y=X+y_1$ and we are in the first case. If $y_1,y_2\notin X$ then 
neither of $y_1,y_2$ may have a neighbour in $X$ as otherwise $E_X$ would be 
a proper subset of $E_{X+y_1}$ or of $E_{X+y_2}$. The other direction is obvious.

So, assume that for proper $A,B\subseteq V(G)$, the set $E_B$ is a lower cover of $E_A\join E_B$.
Then there are $a_1,a_2\in A\sm B$ so that either $A\cup B=B+a_1$ or $E_{A\cup B}=E_B+a_1a_2$.
If $A\cup B=B+a_1$ then $A=(A\cap B)+a_1$, and $E_{A\cap B}$ is a lower cover of $E_A$.
In the other case, when $E_{A\cup B}=E_B+a_1a_2$ we get
\[
E_A=E_A\cap E_{A\cup B} = (E_A\cap E_B)+a_1a_2 = E_{A\cap B}+a_1a_2,
\]
and again $E_{A\cap B}$ is a lower cover of $E_A$. Thus, $L$ is lower semimodular,
and Knill's theorem is proved.

\medskip
El-Zahar~\cite{EZ97} observed that, 
when Knill's theorem is generalised to hypergraphs, 
it becomes yet another reformulation of the union-closed sets conjecture. 

\subsection{The graph formulation}

A more recent  reformulation of the union-closed sets conjecture
is stated in terms of maximal stable sets of bipartite graphs.
A \emph{stable set} of a graph $G$ is a vertex subset 
so that no two of its vertices are adjacent.
A stable set is called \emph{maximal} if no further vertex of $G$ can be added 
without violating the stable set condition. We refer to Diestel~\cite{diestelBook05}
for general terminology and notions on graphs.

The graph formulation of the union-closed sets conjecture 
is as follows:

\begin{conjecture}\label{GraphFranklConj}
Any bipartite graph with at least one edge contains in each of its 
bipartition classes a vertex that lies in at most half of the maximal
stable sets.
\end{conjecture}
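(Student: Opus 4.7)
The plan is to prove the conjecture by establishing its equivalence with the lattice formulation, Conjecture~\ref{latticeconjup}. The bridge is the classical construction of the concept lattice (or Galois lattice) of a bipartite graph, together with the fact that every finite lattice arises from this construction.

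First I would argue Conjecture~\ref{GraphFranklConj} implies Conjecture~\ref{latticeconjup}. Given a finite lattice $L$ with $|L|\geq 2$, form the bipartite graph $G$ with parts $J(L)$ (the join-irreducibles) and $M(L)$ (the meet-irreducibles), inserting an edge between $j\in J(L)$ and $m\in M(L)$ whenever $j\not\leq m$. The key claim is that
\[
x\mapsto S_x:=\{j\in J(L):j\leq x\}\cup\{m\in M(L):m\geq x\}
\]
is a bijection from $L$ onto the maximal stable sets of $G$: stability is immediate, and maximality relies on the standard fact that every element of a finite lattice is the join of the join-irreducibles below it and the meet of the meet-irreducibles above it. Under this bijection a join-irreducible $j$ lies in $S_x$ precisely when $j\leq x$, so the number of maximal stable sets containing $j$ equals $|\upcl{j}|$. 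Applying the graph conjecture to $G$ then produces the join-irreducible demanded by the lattice conjecture, after handling the trivial case where $G$ has no edges (which corresponds to $|L|\leq 2$).

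Conversely, given a bipartite graph $G=(X\cup Y,E)$ with at least one edge, I would form the Galois connection $A\mapsto A^*:=Y\setminus N(A)$ for $A\subseteq X$ and its mirror for $B\subseteq Y$; the closed sets $A=A^{**}$ on the $X$-side, ordered by inclusion, form a lattice $L_G$, and $A\mapsto A\cup A^*$ is a bijection onto the maximal stable sets of $G$. For $v\in X$, the number of maximal stable sets containing $v$ equals $|\upcl{a_v}|$ in $L_G$, where $a_v:=\{v\}^{**}$. Applying Conjecture~\ref{latticeconjup} to $L_G$ supplies a join-irreducible $a$ with $|\upcl{a}|\leq\tfrac12|L_G|$, and the crucial remaining point is that every non-zero join-irreducible of $L_G$ has the form $a_v$ for some $v\in X$: this follows since $A=\bigvee_{v\in A}\{v\}^{**}$ for every non-bottom closed set $A$, so join-irreducibility collapses the join to a single term. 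To find a vertex in the other bipartition class $Y$, I repeat the argument with the Galois closure on the $Y$-side, whose associated lattice is the dual $L_G^*$.

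The main obstacle I anticipate is the clean identification of join-irreducibles of $L_G$ with vertices of $X$: one has to address vertices in $X$ that are isolated (they lie in every maximal stable set) or share a neighborhood with another vertex (they have identical membership profiles), and also the bottom element $\emptyset^{**}$ of $L_G$, which is not join-irreducible. The standard remedy is a short preprocessing step that deletes isolated vertices and merges twins without altering the relevant frequencies, reducing to the case of a \emph{reduced} bipartite graph in which the correspondence between vertices of $X$ and the non-bottom join-irreducibles of $L_G$ becomes a bijection.
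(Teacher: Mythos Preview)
Your opening sentence contains a conceptual slip: establishing equivalence with Conjecture~\ref{latticeconjup} does not \emph{prove} Conjecture~\ref{GraphFranklConj}, because the lattice conjecture is itself open---it is equivalent to the union-closed sets conjecture. What you are actually doing, and what the paper does in the theorem immediately following this conjecture, is proving that the graph formulation is \emph{equivalent} to the other formulations. With that caveat, your technical argument for the equivalence is sound.

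Your route, however, differs from the paper's. The paper establishes equivalence with the \emph{intersection-closed sets conjecture} directly and briefly: for one direction it observes that the traces $\{A\cap X:A\text{ a maximal stable set}\}$ on a bipartition class $X$ form an intersection-closed family; for the converse it takes the incidence bipartite graph of an intersection-closed family $\A$ and checks that the maximal stable sets, intersected with the universe, recover $\A$. No lattice machinery is invoked.

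You instead pass through the concept (Galois) lattice of the bipartite graph, identifying maximal stable sets with lattice elements and vertices of $X$ with join-irreducibles via $v\mapsto\{v\}^{**}$. This is a genuinely different and more structural argument. It buys you a pleasant symmetry: the $X$-side and $Y$-side closures yield dual lattices, so the two rare vertices fall out in parallel, whereas the paper simply repeats its short argument once per side. The cost is the bookkeeping you flag---isolated vertices collapse to the bottom element and twins share a closure---though in fact no preprocessing is strictly necessary: a non-bottom join-irreducible $a$ of $L_G$ equals $a_v$ for some non-isolated $v$, and that $v$ is automatically rare. One small correction: when $|L|=2$ the graph on $J(L)\times M(L)$ \emph{does} have an edge (the unique join-irreducible $1$ is not below the unique meet-irreducible $0$), so the edgeless case corresponds only to $|L|=1$.
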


The conjecture was proposed by Bruhn, Charbit, Schaudt and Telle~\cite{BCT12},
who also proved the equivalence to Frankl's conjecture.
In analogy to the \icc, let us call a vertex \emph{rare} if it is contained in at most half of the maximal stable sets.
Note that for every edge $uv$ of a bipartite graph, always one of $u$ and $v$ is rare.
Indeed, this follows directly from the fact that no stable set
may contain both $u$ and $v$.
Hence, in a hypothetical counterexample to Conjecture~\ref{GraphFranklConj}, one 
bipartition class
of the graph contains only rare vertices, 
while no vertex in the other class is rare.

We sketch why Conjecture~\ref{GraphFranklConj} and the \icc\ are equivalent.
\begin{theorem}\selfcite{BCT12}
Conjecture~\ref{GraphFranklConj} holds if and only if the union-closed sets conjecture
is true.
\end{theorem}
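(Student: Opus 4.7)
The plan is to transit between the two settings via a pair of natural constructions. For the direction UCC (equivalently \icc) implies the graph conjecture, I would take a bipartite graph $G=(A\cup B, E)$ with at least one edge and consider the trace family $\mathcal F_A := \{M\cap A : M \in \mathcal M(G)\}$, where $\mathcal M(G)$ denotes the family of maximal stable sets of $G$. The map $M\mapsto M\cap A$ is a bijection onto $\mathcal F_A$, because maximality forces $M\cap B = B\sm N(M\cap A)$ and $M\cap A = A\sm N(M\cap B)$. The second identity exhibits $M\cap A$ as a fixed point of the closure operator $X\mapsto A\sm N(B\sm N(X))$ coming from the Galois connection defined by the edge relation, and fixed points of any closure operator form an intersection-closed family. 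Applying the \icc\ to $\mathcal F_A$ (the existence of an edge forces $|\mathcal F_A|\geq 2$, by picking $u\in A$ of positive degree with a neighbor $v\in B$ and extending $\{u\}$ and $\{v\}$ greedily to maximal stable sets with differing traces on $A$) yields a rare vertex in $A$, and the symmetric argument on $B$ produces a rare vertex in $B$.

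For the converse, given an intersection-closed family $\D$ with $|\D|\geq 2$, I would assume $\emptyset, U(\D)\in\D$ by the reductions recorded in Section~\ref{sec:elementary} and form the bipartite graph $G$ with parts $A=U(\D)$ and $B=\D$, joining $x\in A$ to $D\in B$ exactly when $x\notin D$. Since $\emptyset\in\D$ is adjacent to every $x\in U(\D)$, and $U(\D)\neq\emptyset$ (because $|\D|\geq 2$ together with $\emptyset\in\D$ forces a nonempty member-set), $G$ has at least one edge. A routine check shows that the maximal stable sets $S$ of $G$ biject with $\D$ via $S\mapsto S\cap A$: given $F\in\D$, the set $S_F:=F\cup\{D\in\D : F\subseteq D\}$ is stable (the non-edges are exactly the pairs $(x,D)$ with $x\in D$) and maximal (any $x\in A\sm F$ is adjacent to $F\in S_F$ since $F\in B$ lies in $S_F$; any $D\in B\sm S_F$ satisfies $F\not\subseteq D$ and hence has a neighbor in $F\subseteq S_F$), and every maximal stable set arises this way. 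Under this bijection, $x\in A$ lies in $S_F$ if and only if $x\in F$, so the graph conjecture applied to $G$ supplies an $x\in U(\D)$ with $|\D_x|\leq\tfrac12|\D|$, a rare element of $\D$.

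The core content in both directions is the bijection between $\mathcal M(G)$ and the relevant set family, and the main thing to get right is the normalisation $\emptyset,U(\D)\in\D$: this is exactly what guarantees that arbitrary intersections of members of $\D$ remain in $\D$, aligning the closure operator on the graph side with the intersection structure on the family side. Everything else reduces to a routine verification of the bijections and a careful check that the edge hypothesis rules out the degenerate case where the associated trace family could have fewer than two members.
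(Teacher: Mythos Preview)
Your argument is correct and follows the same two-step scheme as the paper: for the forward direction you use that the traces of maximal stable sets on one bipartition class form an intersection-closed family, and for the converse you build a bipartite graph from the given family and read off a rare element from a rare vertex. The one cosmetic difference is that your converse uses the \emph{non}-incidence graph ($x\sim D$ iff $x\notin D$) in place of the paper's incidence graph; this complementary choice makes the identification $S_F\cap U(\D)=F$ hold on the nose, but the underlying idea is identical.
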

\begin{proof}
To prove equivalence to the intersection-closed sets conjecture, 
let us first consider a bipartite graph $G$ 
with bipartition classes $X,Y$. By symmetry it is enough to find a rare vertex 
in $X$. 
Let $\A$ be the set of maximal stable sets of $G$.
It is straightforward to check that the traces of maximal stable sets in $X$, 
the set $\{A \cap X : A \in \A\}$, is intersection-closed.  
Thus, if the intersection-closed sets conjecture is true, 
there must be a rare element $x$ of $\{A \cap X : A \in \A\}$,
which then is a rare vertex of $G$.

For the converse direction, let an intersection-closed family $\A$ 
be given. We may assume that $\A$ contains its universe $U$.
We define a bipartite graph $G=(V,E)$ on $V=\A\cup U$
with edge set $E = \{ Sx : S \in \A,\, x \in U,\, x \in S \}$.
That is, $G$ is the incidence graph of $\A$. See Figure~\ref{firstgraph}
for an illustration.

Then, if $\B$ denotes the set of maximal stable sets of $G$,
it follows that $\A = \{B \cap U : B \in \B\}$.
Thus, if $x$ is a rare vertex of $G$ in $U$, then $x$ is a rare element of $\A$.
This completes the proof.
\end{proof}

\begin{figure}[ht]
\centering
\includegraphics[scale=0.8]{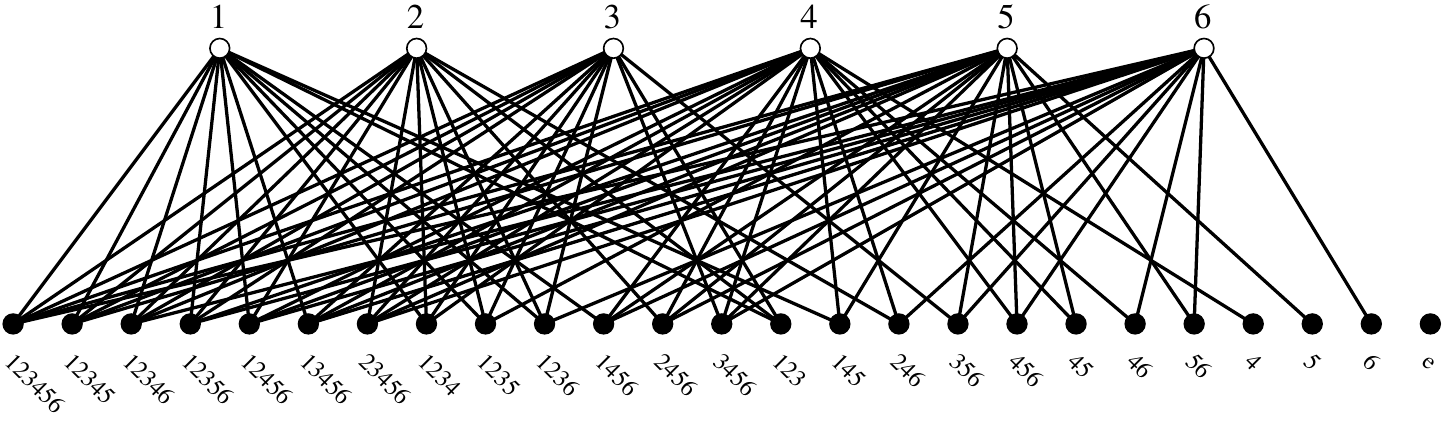}
\caption{The incidence graph of the \icf~shown in Figure~\ref{fig:firstex}}\label{firstgraph}
\end{figure}

As for the lattice 
fromulation, we will say that a bipartite graph \emph{satisfies Frankl's conjecture}
if the graph is not a counterexample to Conjecture~\ref{GraphFranklConj},
or if it is edgeless.

Figure~\ref{firstgraph} shows the graph representation of intersection-closed
family in Figure~\ref{fig:firstex}. 
We have to admit that it does not appear very appealing, as
listing the family seems much simpler.
Nonetheless, the graph formulation allows for a very compact representation 
of Frankl's conjecture. This is exemplified by the graph in Figure~\ref{secondgraph}
that encodes the same family as the graph in Figure~\ref{firstgraph}.
We arrive at this graph by iteratively deleting any vertex $v$
whose neighbourhood is equal to the union of neighbourhoods 
of some other vertices. It is easy to check
that the resulting  graph with $v$ deleted satisfies the conjecture only if
the original graph does, see also \cite{BCT12}.

\begin{figure}[ht]
\centering
\includegraphics[scale=0.8]{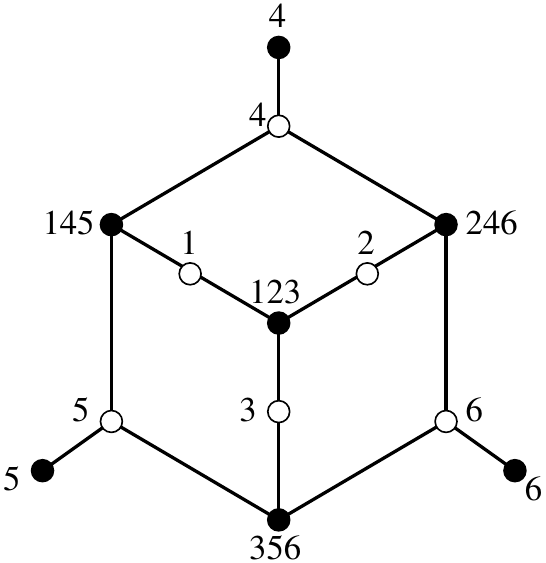}
\caption{A more succinct representation}\label{secondgraph}
\end{figure}

\subsection{Graph results}\label{sec:GraphResults}

The literature on graphs provides a rich selection of 
natural graph classes, even bipartite ones, that may now serve as test cases
for Frankl's conjecture. 
So far, the conjecture has been verified for 
chordal bipartite, subcubic, series-parallel~\cite{BCT12} and, in an approximate version, random bipartite graphs~\cite{BS12}. 
We present some of these results here.

\medskip
A bipartite graph is said to be \emph{chordal bipartite} if 
deleting vertices from the graph can never result in a chordless cycle
of length $\geq 6$.

\begin{theorem}\selfcite{BCT12}\label{thm:BCT}
Chordal bipartite graphs satisfy Frankl's Conjecture.\sloppy
\end{theorem}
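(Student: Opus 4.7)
The plan is to induct on $|V(G)|$, with the base case being graphs with at most one edge — in a single-edge graph each endpoint lies in exactly one of the two maximal stable sets and is therefore rare. The key structural input is the classical theorem of Golumbic: every chordal bipartite graph with at least one edge admits a \emph{bisimplicial} edge $xy$, namely one for which $N(x)\cup N(y)$ induces a complete bipartite subgraph. Equivalently, $N(x)\subseteq N(u)$ for every $u\in N(y)$, and symmetrically for $y$. Since induced subgraphs of chordal bipartite graphs are themselves chordal bipartite, the induction will run smoothly as soon as we know how to shrink the graph. By symmetry it suffices to produce a rare vertex in the bipartition class containing $x$.

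I would first dispose of the easy reductions. If two distinct vertices $u,u'$ on the same side satisfy $N(u)=N(u')$, they are \emph{twins} and lie in precisely the same maximal stable sets; deleting one and applying induction yields a rare vertex which remains rare in $G$. More generally, the reduction described after Figure~\ref{secondgraph} — removing any vertex whose neighbourhood is the union of the neighbourhoods of other vertices — preserves chordal bipartiteness and reduces the problem. After these simplifications we may assume that any two distinct vertices on the same side have distinct neighbourhoods and that no vertex's neighbourhood is a union of others'.

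Now choose a bisimplicial edge $xy$. The no-twin assumption forces $N(x)\subsetneq N(u)$ for every $u\in N(y)\setminus\{x\}$, so $x$ has the inclusion-minimum neighbourhood among vertices of $N(y)$. To certify that $x$ is rare I would exhibit an injection $\phi$ from the maximal stable sets containing $x$ into those not containing $x$: given a maximal stable set $A\ni x$, remove from $A$ all of $N(y)\cap A$, add $y$, and extend the result canonically (for instance, in the lexicographically smallest way with respect to a fixed vertex ordering) to a maximal stable set. The bisimplicial property together with the chordal bipartite structure should make $\phi$ well-defined and injective.

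The main obstacle is precisely the injectivity of $\phi$: two distinct preimages might collapse to the same image if the canonical extension washes out their differences. Here one has to exploit the absence of induced cycles of length $\geq 6$ to control the interaction between $N(x)$, $N(y)$ and the rest of $G$ so that the swap-and-extend operation is reversible. If the direct route proves too fragile, an alternative is to iterate the bisimplicial reduction on the subgraph $G-(N[x]\cup N[y])$ and patch rare vertices across components, using the inductive hypothesis on each piece.
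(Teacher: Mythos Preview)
Your core claim---that the endpoint $x$ of a bisimplicial edge is rare---is false, and this is a genuine gap rather than just a loose end. Take the path $x\!-\!y\!-\!x'\!-\!y'$ (certainly chordal bipartite). The edge $xy$ is bisimplicial, since $N(x)=\{y\}$ and $N(y)=\{x,x'\}$ induce a complete bipartite graph. The maximal stable sets are $\{x,x'\}$, $\{x,y'\}$ and $\{y,y'\}$, so $x$ lies in two of the three and is \emph{not} rare. Your proposed injection also visibly collapses here: both $\{x,x'\}$ and $\{x,y'\}$ are sent to $\{y,y'\}$ after removing $N(y)\cap A$, adding $y$, and extending. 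No amount of chordal-bipartite structure can repair this, because there simply is no injection from $\A_x$ into $\A_{\bar x}$.

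The conceptual error is that the bisimplicial condition singles out vertices whose neighbourhoods are \emph{inclusion-minimal} (you note this yourself: $N(x)\subsetneq N(u)$ for all $u\in N(y)\setminus\{x\}$), and such vertices tend to lie in \emph{many} maximal stable sets. The paper's argument goes in the opposite direction: it identifies a vertex $y$ whose neighbourhood is \emph{large}, namely one with $N^2(x)\subseteq N(y)$ for some neighbour $x$ (Lemma~\ref{lem:NeighbourhoodRare}). For such a pair, every $A\in\A_y$ satisfies $N^2(x)\cap A=\emptyset$ and hence $N(x)\subseteq A$; the map $A\mapsto (A\setminus N(x))$ followed by filling with vertices of $N^2(x)$ is then injective because $A$ is recovered as $(B\setminus N^2(x))\cup N(x)$---the injectivity obstacle you worry about never arises. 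The structural input is not a bisimplicial edge but a \emph{weakly simplicial} vertex $x$ (one whose neighbours have nested neighbourhoods); choosing $y\in N(x)$ with $N(y)$ maximal then gives $N^2(x)=N(y)$, and chordal bipartite graphs are known to have such a vertex in each bipartition class.
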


The proof rests on the local structure of chordal
bipartite graphs. 
This is a general strategy that we will discuss 
in more detail in Section~\ref{localconfigs}.
The main tool here is the following lemma, where we denote by $N^2(x)$ the neighbours of the neighbours of a vertex $x$ (including $x$).

\begin{lemma}\selfcite{BCT12}\label{lem:NeighbourhoodRare}
Let $x,y$ be two adjacent vertices of a bipartite graph with $N^2(x) \subseteq N(y)$.
Then $y$ is rare.
\end{lemma}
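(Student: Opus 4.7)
My plan is to construct an injection from the collection of maximal stable sets containing $y$ into the collection of maximal stable sets avoiding $y$. The naive idea is the swap $S\mapsto (S\setminus\{y\})\cup\{x\}$; this is stable exactly when $N(x)\cap S=\{y\}$, but in general $S$ may contain further neighbours of $x$. To repair this, I would remove all of $N(x)\cap S$ and then restore maximality by re-admitting the suitable neighbours of $y$, namely
\[
\phi(S) \;:=\; (S\setminus N(x))\cup W(S), \qquad W(S):=\{w\in N(y)\setminus S : N(w)\cap S\subseteq N(x)\}.
\]
Note that $x\in W(S)$ trivially: $x\in N(y)$, $x\notin S$ (because $y\in S$ is a neighbour of $x$), and $N(x)\cap S\subseteq N(x)$ is vacuous.

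The easier step is to verify that $\phi(S)$ is indeed a maximal stable set avoiding $y$. Stability is routine: $W(S)$ lies in the bipartition class of $x$ and so is internally stable, while the defining condition on $W(S)$ forbids edges from its members into $S\setminus N(x)$. For maximality, let $v\notin\phi(S)$. If $v\in N(x)\cap S$, then $x\in\phi(S)$ is a neighbour of $v$. Otherwise $v\notin S$, so by maximality of $S$ some $v'\in N(v)\cap S$ exists. If $v'\notin N(x)$, then $v'\in\phi(S)$ and we are done. Otherwise every $S$-neighbour of $v$ lies in $N(x)$: if $v\in N(x)$, then $x\in\phi(S)$ works; and if $v\notin N(x)$, then $v$ has a neighbour in $N(x)$, whence $v\in N^2(x)\subseteq N(y)$ by hypothesis, so $v\in W(S)\subseteq\phi(S)$, contradicting $v\notin\phi(S)$. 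Finally, $y\notin\phi(S)$ because $y$ was removed with $N(x)$ and $W(S)$ sits in the opposite bipartition class from $y$.

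The main hurdle is injectivity of $\phi$, which I plan to establish by explicitly reconstructing $S$ from $T:=\phi(S)$. Writing $V_x$ for the bipartition class containing $x$, the intersection $S\cap V_x$ is disjoint from $N(y)$ (because $y\in S$ and $S$ is stable), whereas $W(S)\cup\{x\}\subseteq N(y)$; consequently $S\cap V_x=(T\cap V_x)\setminus N(y)$. Next, since every vertex of $N(x)$ has all its neighbours in $V_x$, the maximality of $S$ forces $v\in N(x)\cap S$ if and only if $v\in N(x)$ and $N(v)\cap(S\cap V_x)=\emptyset$; this recovers $N(x)\cap S$ from the part of $S$ already reconstructed. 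Letting $V_y$ denote the other bipartition class, we then have $S\cap V_y=(T\cap V_y)\cup(N(x)\cap S)$. Thus $S$ is fully determined by $T$, which gives injectivity and proves that $y$ is rare.
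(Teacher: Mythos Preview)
Your proof is correct and, in fact, your map $\phi$ coincides with the paper's injection (your set $W(S)$ is contained in $N^2(x)$, since any $w\in W(S)$ has $y\in N(w)\cap S\subseteq N(x)$). You work harder than necessary, though: the paper first observes that $N^2(x)\subseteq N(y)$ forces $N^2(x)\cap S=\emptyset$, so every $v\in N(x)$ has $N(v)\cap S=\emptyset$ and hence $N(x)\subseteq S$ by maximality; since every $S\in\A_y$ therefore agrees on $N(x)\cup N^2(x)$ and $\phi$ modifies only those vertices, injectivity is immediate without your reconstruction argument.
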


\begin{proof}
Let $\A$ denote the maximal stable sets of the chordal bipartite graph $G$,
and consider $A\in\A_y$, that is, a maximal stable set containing $y$. 
Since $y\in A$, no neighbour of $y$ may be in $A$ and hence $N^2(x)\cap A=\emptyset$
as $N^2(x) \subseteq N(y)$. Therefore, no vertex in $N(x)$ is adjacent with a
vertex in $A$, which implies $N(x)\subseteq A$.

We  now construct an injective mapping $\A_y\to\A_x$:
given a set $A \in \A_{y}$, first remove all members of $N(x)$ from $A$ 
and then fill up the resulting set to a maximal stable set with vertices from $N^2(x)$.
Finally, since $x$ is adjacent to~$y$, we have $\A_x \subseteq \A_{\overline{y}}$.
Altogether, there is an injection $\A_y \to \A_{\overline{y}}$, which means that $y$ is rare.
\end{proof}

To finish the proof of Theorem~\ref{thm:BCT} it now suffices to observe that 
a type of vertex known as a \emph{weakly simplicical} vertex satisfies the conditions
of the lemma. That such a vertex always exists 
in each bipartition class is known from the literature on chordal bipartite graphs. 
For details see~\cite{BCT12}.

\medskip
Using results of Vaughan on $3$-sets and 
Knill's graph generated families (discussed in Sections~\ref{localconfigs} and~\ref{sec:latticeresults} respectively), we obtain Frankl's conjecture for 
another natural  graph class. Recall that a graph is \emph{subcubic} 
if every vertex has degree at most three. 
\begin{theorem}\selfcite{BCT12}
Every subcubic bipartite graph satisfies Frankl's conjecture. 
\end{theorem}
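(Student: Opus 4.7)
The plan is to use the graph formulation (Conjecture~\ref{GraphFranklConj}) and reduce the problem, on one side of the bipartition, to a union-closed family generated by sets of size at most three, at which point Vaughan and Knill take over.

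First I would translate the problem. Let $G$ be a subcubic bipartite graph with bipartition $X\cup Y$. By symmetry it suffices to locate a rare vertex in $X$. Since $G$ is bipartite, every maximal stable set $A$ is determined by its $X$-trace $S:=A\cap X$ via $A\cap Y=Y\sm N(S)$, subject to the maximality constraint that every $x\in X\sm S$ must have a neighbour $y\in Y$ whose full neighbourhood in $X$ lies in $X\sm S$. A short unpacking shows that the union-closed family $\bar{\A}:=\{X\sm S : A \text{ a maximal stable set},\ S=A\cap X\}$ is precisely the union-closure of the neighbourhood collection $\{N(y):y\in Y\}$. Because $G$ is subcubic, each $N(y)$ has at most three elements, so $\bar\A$ is a union-closed family generated by sets of size at most three. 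Finding a rare vertex in $X$ for $G$ is equivalent to finding an abundant element in $\bar\A$.

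I would then argue by a dichotomy based on the sizes of the generating sets $N(y)$. If every generator has size at most $2$, the union-closure of $\{N(y):y\in Y\}$ can be encoded, after the clean-up of redundant vertices described around Figure~\ref{secondgraph}, by an auxiliary graph $H$ on vertex set $X$ in which the generators $N(y)$ play the role of edges; this recasts $\bar\A$ as a Knill graph-generated family (via the complementation $\D=\{U(\A)\sm A:A\in\A\}$ that swaps union-closed and intersection-closed), and Knill's theorem from Section~\ref{sec:latticeresults} delivers the abundant element. Otherwise some $N(y)$ has size exactly three, and I would invoke Vaughan's results on 3-sets from Section~\ref{localconfigs} — precisely designed to produce an abundant element in a union-closed family that contains a basis set of size three — to conclude.

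The main obstacle I foresee is the reduction in the all-generators-$\leq 2$ case: one must verify that the union-closure of a collection of at most $2$-element sets, after removing dominated elements in the manner of Figure~\ref{secondgraph}, actually matches a Knill family $\{E_X:X\subseteq V(H)\}$ for a suitable auxiliary graph $H$, and that the abundant element produced there lifts back to a vertex of the original $G$. The remainder is bookkeeping inside frameworks already laid down in earlier sections, together with ensuring Vaughan's hypotheses are satisfied as soon as a degree-three vertex appears in $Y$.
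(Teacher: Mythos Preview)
Your overall strategy --- translate to the union-closed family $\bar\A$ generated by the neighbourhoods $\{N(y):y\in Y\}$ and then invoke Vaughan's $3$-set result together with Knill's theorem --- is exactly the route the paper indicates, and your identification of $\bar\A$ with the union-closure of the neighbourhoods is correct and cleanly argued.

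The gap is in your dichotomy. Vaughan's result (as stated in Section~\ref{localconfigs}) does \emph{not} fire as soon as a single $3$-generator $N(y)$ is present: it requires \emph{three distinct} $3$-sets with a common element. Indeed, Section~\ref{sec:hardness} explicitly exhibits Poonen's family in which the unique smallest member-set is a $3$-set none of whose elements is abundant, so ``contains a basis set of size three'' is nowhere near sufficient. What you actually need is some $x\in X$ of degree~$3$ all three of whose neighbours $y_1,y_2,y_3$ have degree~$3$ and pairwise distinct neighbourhoods $N(y_i)$; arranging that such an $x$ exists, or else reducing the instance until every remaining generator has size at most~$2$, is where the genuine case analysis in~\cite{BCT12} lives, and it is not the bookkeeping you suggest. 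You have located the difficulty on the wrong side of your dichotomy: the Knill reduction you worry about is real (Knill's universe is the \emph{edge} set of the auxiliary graph, not its vertex set, so the match-up needs care), but the harder step is bridging the distance between ``some $3$-generator exists'' and ``three $3$-generators meet in a point''.
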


The third class of graphs we treat are random bipartite graphs, 
where we can only prove a slight weakening of Frankl's conjecture.
A \emph{random bipartite graph} is a graph on bipartition classes of cardinalities $m$ and $n$, 
where any two vertices from different classes are independently joined by an 
edge with probability $p$.

For $\delta>0$,
let us say that a bipartite graph \emph{satisfies \fc~up to~$\delta$}
if 
each of its two bipartition classes has a vertex
for which the number of maximal stable sets containing it is at most
$\tfrac{1}{2}+\delta$ times the total number 
of maximal stable sets. 
We say that \textit{almost every} random bipartite graph 
has property $P$ if 
for every $\varepsilon >0$ there is an $N$ such that, whenever $m+n \ge N$, 
the probability that a random bipartite graph on $m+n$ vertices 
has $P$ is at least $1-\varepsilon$.

\begin{theorem}\selfcite{BS12}
Let $p\in (0,1)$ be a  fixed edge-probability.
For every $\delta>0$, almost every random bipartite graph satisfies \fc~up to~$\delta$.
\end{theorem}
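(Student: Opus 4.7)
The plan is to show that, with probability tending to $1$ as $m+n\to\infty$, the maximal stable sets of the random bipartite graph $G$ admit a structural dichotomy from which the existence of a vertex of frequency at most $(\tfrac{1}{2}+\delta)|\A|$ in each bipartition class follows by averaging.

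The first step is the structural dichotomy. For any fixed $\epsilon>0$, I would show that with high probability every maximal stable set $(A,B)$ of $G$ (with $A\subseteq X$, $B\subseteq Y$) satisfies $|A|\leq\epsilon m$ or $|B|\leq\epsilon n$. This is a straightforward first-moment calculation: the expected number of maximal stable sets with both $|A|\geq\epsilon m$ and $|B|\geq\epsilon n$ is bounded above by
\[
\sum_{a\geq\epsilon m,\;b\geq\epsilon n}\binom{m}{a}\binom{n}{b}(1-p)^{ab},
\]
and entropy bounds show this sum tends to $0$ whenever $m,n$ both tend to infinity, since the exponent $ab\log(1/(1-p))$ dominates $mH(a/m)+nH(b/n)$ when $a/m,b/n\geq\epsilon$ and $mn\gg m+n$.

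Splitting $\A=\A^X\cup\A^Y$ according to the smaller side---$\A^Y$ being the maximal stable sets with $|A|\leq\epsilon m$ and $\A^X$ those with $|B|\leq\epsilon n$---I would then bound
\[
\sum_{y\in Y}|\A_y|\;=\;\sum_{S\in\A}|S\cap Y|.
\]
The $\A^X$ contribution is at most $\epsilon n\,|\A^X|$. The $\A^Y$ contribution requires a refined estimate: for a maximal stable set $(A,B)$ in $\A^Y$ with $|A|=a$, the expected size of $B=Y\setminus N(A)$ is $n(1-p)^a$. Summing $\binom{m}{a}\,n(1-p)^a$ over the range $a\leq a^{\ast}\approx\log n/\log(1/(1-p))$, beyond which $A$ is unlikely to yield a maximal stable set, shows that the average of $|B|$ over $\A^Y$ is in fact $o(n)$. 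Combining the two contributions and averaging over $y\in Y$, some vertex $y$ has $|\A_y|\leq(\tfrac{1}{2}+\delta)|\A|$ for $\epsilon<\delta$. The argument with $X$ and $Y$ interchanged produces a rare vertex in $X$.

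The main obstacle is the concentration step: $|\A|$ and the $|\A_y|$ are sums of highly dependent indicator variables, since maximal stability of a candidate $A$ depends on many edges of $G$. Transferring the expectation estimates above to the random graph itself therefore requires a martingale (Azuma) or Talagrand-type concentration argument, built on the observation that a single edge of $G$ has bounded influence on the set of maximal stable sets in the structural regime identified above.
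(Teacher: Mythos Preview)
Your approach via averaging is indeed the tool the paper cites, and the dichotomy together with the moment computation you outline are the right ingredients when both $m$ and $n$ tend to infinity. There is, however, a regime you have not covered and in which the argument genuinely breaks. The paper's definition of ``almost every'' only requires $m+n\geq N$, so one side may stay bounded while the other grows. Take $m=2$ and $p<2-\sqrt{2}$. With high probability all four subsets $A\subseteq X$ yield maximal stable sets, and the average of $|B|/n$ over them equals
\[
\tfrac{1}{4}\bigl(1+2(1-p)+(1-p)^2\bigr)=\tfrac{(2-p)^2}{4}>\tfrac{1}{2}.
\]
Hence no choice of $\epsilon$ can make $\sum_{y}|\A_y|\leq(\tfrac{1}{2}+\delta)\,n\,|\A|$ hold, and averaging simply cannot locate a rare vertex in $Y$. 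Such a vertex does exist---any $y$ adjacent to every vertex of $X$ satisfies $|\A_y|=1$---but this is a separate argument your proposal omits. More generally, whenever $m$ is below a threshold depending only on $p$, the average frequency on the $Y$-side exceeds $\tfrac{1}{2}$, and the unbalanced case must be handled directly.

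A smaller point concerns your estimate of the $\A^Y$ contribution. The sum $\sum_{a\leq a^\ast}\binom{m}{a}\,n(1-p)^a$ is (in expectation) an upper bound on $\sum_{S\in\A^Y}|S\cap Y|$, not on the \emph{average} $|B|$. To conclude that the average is $o(n)$ you also need a matching lower bound on $|\A^Y|$, i.e.\ you must show that the number of maximal stable sets with small $A$ is of the same order as the numerator. In the balanced regime this is true (both are dominated by $a\approx a^\ast$), but it is an additional step, and it is exactly this step that fails when $m$ is small.
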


The main tool in the proof is the averaging approach detailed in Section~\ref{sec:averaging}.

\subsection{The Salzborn formulation}

Returning to the sets point of view, let us present 
a surprising reformulation of the conjecture that W\'ojcik~\cite{Woj99}
attributes to Salzborn~\cite{Sal89}. Recall that a union-closed family $\A$
is separating if for any two elements of its universe there is
a member-set that contains exactly one of the two. 
It is easy to check that $\A$ needs to have at least $|U(\A)|$ non-empty sets 
to separate all elements of its universe. Thus, if $\emptyset\in\A$
then $\A$ will have at least $|U(\A)|+1$ member-sets. It turns out that the families 
with this minimum number of member-sets have a surprisingly rich structure. 

Let us call a union-closed family $\mathcal N$ \emph{normalised} 
if it holds that $\emptyset\in\mathcal N$, $\mathcal N$ is separating 
and $|U(\mathcal N)|=|\mathcal N|-1$.
The following conjecture may be found in W\'ojcik~\cite{Woj99}, 
or, with less details, in Salzborn~\cite{Sal89}.

\begin{conjecture}[Salzborn~\cite{Sal89}]\label{salzconj}
Any normalised family $\mathcal N\neq\{\emptyset\}$ contains a basis set $B$
of size $|B|\geq\tfrac{1}{2}|\mathcal N|$.
\end{conjecture}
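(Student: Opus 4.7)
The plan is to proceed by strong induction on $n:=|U(\mathcal N)|$, aiming at each step to exhibit a basis set $B$ of size at least $\tfrac{1}{2}|\mathcal N|=\tfrac{n+1}{2}$. The base case $n=1$ forces $\mathcal N=\{\emptyset,\{1\}\}$, whose sole basis set $\{1\}$ meets the bound. For the inductive step I would first dispose of the easy configuration: since $\mathcal N$ is union-closed we have $U(\mathcal N)\in\mathcal N$, so if $U(\mathcal N)$ happens to be a basis set the proof is immediate, because $|U(\mathcal N)|=n\geq\tfrac{n+1}{2}$ whenever $n\geq 1$. The remaining case is that $U(\mathcal N)=X\cup Y$ for some proper $X,Y\in\mathcal N$.

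The natural reduction here is element deletion. For $x\in U(\mathcal N)$ let $\mathcal N_{-x}:=\{A\sm\{x\}:A\in\mathcal N\}$ (as a set, so duplicates disappear), and set $c(x):=|\{A\in\mathcal N_{\overline x}:A\cup\{x\}\in\mathcal N\}|$. Then $|\mathcal N_{-x}|=|\mathcal N|-c(x)$ and $|U(\mathcal N_{-x})|=n-1$, so $\mathcal N_{-x}$ retains the normalisation identity $|\mathcal N_{-x}|=|U(\mathcal N_{-x})|+1$ exactly when $c(x)=1$. To invoke induction I would therefore search for an $x$ with $c(x)=1$ for which $\mathcal N_{-x}$ remains separating. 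Induction would then supply a basis set $B'$ of $\mathcal N_{-x}$ with $|B'|\geq\tfrac{n}{2}$, and a final lifting step would try to show that either $B'$ or $B'\cup\{x\}$ is a basis set of $\mathcal N$ of size at least $\tfrac{n+1}{2}$.

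The decisive obstacle is that Salzborn's conjecture is equivalent to Frankl's conjecture, so any general execution of the scheme above would resolve the full union-closed sets conjecture. What looks like a routine combinatorial step -- finding $x$ with $c(x)=1$ preserving separation, and lifting the induced basis set without loss -- is in fact not routine at all: there exist normalised families in which every candidate $x$ either forces $c(x)=0$ (so the universe shrinks faster than the family and normalisation is lost), collapses two or more pairs, or breaks separation. A fallback averaging route -- trying to show that the mean size of a basis set in any normalised family is at least $\tfrac{n+1}{2}$ -- runs into the same wall, since any such quantitative bound appears at least as hard as Frankl's conjecture itself. The induction outline above therefore localises precisely where a new idea must enter, in the choice of the deletion element and in the control of the lift, but closing either step in full generality would amount to proving the main conjecture.
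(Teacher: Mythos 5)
The statement is a \emph{conjecture} (Salzborn's reformulation), and the paper gives no proof of it. On the contrary, the paper proves that Conjecture~\ref{salzconj} is \emph{equivalent} to the union-closed sets conjecture, so a proof would settle the entire problem. You recognise this, and your write-up is best read as a correct assessment rather than a proof. The bookkeeping in your inductive scheme is sound: with $n=|U(\mathcal N)|$ we have $|\mathcal N|=n+1$; after deleting $x$ one gets $|\mathcal N_{-x}|=|\mathcal N|-c(x)$ and $|U(\mathcal N_{-x})|=n-1$, so normalisation is preserved exactly when $c(x)=1$; and the base case $n=1$ with $\mathcal N=\{\emptyset,\{1\}\}$ checks out. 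You are also right that the two genuine obstructions are (a) existence of a deletion element $x$ with $c(x)=1$ that keeps the reduced family separating, and (b) lifting a basis set of $\mathcal N_{-x}$ of size at least $\tfrac{n}{2}$ to a basis set of $\mathcal N$ of size at least $\tfrac{n+1}{2}$. Neither is routine, and closing either in general would amount to proving Frankl's conjecture, exactly as you say.

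One caveat worth noting: you assert, without exhibiting an example, that ``there exist normalised families in which every candidate $x$ either forces $c(x)=0$, collapses two or more pairs, or breaks separation.'' This is plausible -- indeed something of that kind must be true if the scheme cannot be carried through -- but as stated it is an unsubstantiated claim, not something you have shown. The safer formulation is simply that you have no argument that a good $x$ always exists, nor that the lift always works. With that adjustment, your conclusion -- that you have localised where a new idea must enter rather than produced a proof -- is precisely the right thing to say about a statement that the paper itself treats as open and equivalent to the main conjecture.
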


Following W\'ojcik~\cite{Woj99}, we outline
 why Salzborn's conjecture implies the \ucc.
Consider a union-closed family $\A$ that we may assume to contain $\emptyset$
as a member-set. 
We define 
\[
\A\nsub{X}:=\{A\in\A:A\nsubseteq X\}\text{ and }\A^*:=\{\A\nsub{X}:X\in\A\}.
\]

It is easy to check that $\A^*$ is  union-closed and  separating.
We note that 
$X\subseteq Y$ if and only if
$\A\nsub{X}\supseteq\A\nsub{Y}$ for any $X,Y\in\A$.
This has several consequences. Firstly,  $\A\nsub{X}\neq\A\nsub{Y}$
if $X\neq Y$, which implies that $|\A|=|\A^*|$. 
Secondly, $U(\A^*)=\A\nsub{\emptyset}=\A\sm\{\emptyset\}$.
Finally, we remark that $\A^*$ has the dual lattice structure of $\A$.

To summarise, $\A^*$ is normalised and has the same number of members as~$\A$.
Next, we consider the basis sets of $\A^*$. 
\begin{equation}\label{salzbasis}
\text{\em
Every basis set of $\A^*$ is of the form $\A_x$ for some $x\in U(\A)$.
}
\end{equation}
Indeed, consider a basis set $\A\nsub{X}$ of $\A^*$, and observe that 
$\A\nsub{X}=\bigcup_{y\in U(\A)\sm X}\A_y$.
Pick a smallest set $S\subseteq  U(\A)\sm X$ so that still 
$\A\nsub{X}=\bigcup_{y\in S}\A_y$ and consider a bipartition $S_1\cup S_2=S$.
Since $\A_y=\A\nsub{U(\A_{\overline y})}$, 
both $\bigcup_{y\in S_1}\A_y$ and $\bigcup_{y\in S_2}\A_y$
are members of $\A^*$.
Since $\A\nsub{X}=\bigcup_{y\in S_1}\A_y\cup \bigcup_{y\in S_2}\A_y$,
$\A\nsub{X}$ is the union of two member-sets of $\A^*$. 
As $\A\nsub{X}$ is a basis set that implies that already 
$\A\nsub{X}=\bigcup_{y\in S_i}\A_y$ for $i=1$ or $i=2$,
which by the minimality of $S$ forces $S=S_i$. Therefore, 
$S$ has to contain a unique element $x$, that is, 
$\A\nsub{X}=\A_x$. 

\medskip
Assume now Conjecture~\ref{salzconj} to hold. Then the normalised
family $\A^*$ contains a basis set $B^*$ with 
\[
|B^*|\geq\tfrac{1}{2}|\A^*|=\tfrac{1}{2}|\A|.
\]
As $B^*=\A_x$ for some $x\in U(\A)$ by~\eqref{salzbasis} we 
deduce that $\A$ satisfies the \ucc. 
We therefore have proved one direction of:
\begin{theorem}[Salzborn~\cite{Sal89}]
Conjecture~\ref{salzconj} is equivalent to the \ucc.
\end{theorem}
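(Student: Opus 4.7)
The excerpt already establishes the direction ``Salzborn's conjecture $\Rightarrow$ UCC'' in the paragraphs immediately before the theorem. My plan therefore focuses on the reverse implication ``UCC $\Rightarrow$ Salzborn's conjecture'', which completes the equivalence. The strategy is to route through the lattice reformulation (Conjecture~\ref{latticeconjup}), which is already proved equivalent to UCC earlier in the paper.

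The crux of the argument is the identity
\[
|M| + |\upcl{M}| = |\mathcal N| \qquad \text{for every } M \in \mathcal N,
\]
valid whenever $\mathcal N$ is normalised. I would prove it by exhibiting the map $\psi : U(\mathcal N) \to \mathcal N$ defined by $\psi(u) := U(\mathcal N_{\overline u})$, the union of all member-sets that avoid $u$; this lies in $\mathcal N$ because $\mathcal N_{\overline u}$ is union-closed and $\emptyset \in \mathcal N$ handles the degenerate case. Since $u \notin \psi(u)$ but $u \in U(\mathcal N)$, the image misses $U(\mathcal N)$. The separating property makes $\psi$ injective (distinct $u,u'$ admit a member $X$ containing exactly one of them, which forces $u' \in \psi(u)$ but $u' \notin \psi(u')$), and the numerical constraint $|U(\mathcal N)| = |\mathcal N| - 1$ promotes injectivity to a bijection $\psi : U(\mathcal N) \to \mathcal N \setminus \{U(\mathcal N)\}$. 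The defining property $M \subseteq \psi(u) \iff u \notin M$ then yields $|\{X \in \mathcal N \setminus \{U(\mathcal N)\} : X \supseteq M\}| = |U(\mathcal N)| - |M|$, and adding back $U(\mathcal N) \in \upcl{M}$ gives the identity.

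Now view $\mathcal N$ as a finite lattice under inclusion; since its join is union, the join-irreducibles are exactly its basis sets. Since $\mathcal N \neq \{\emptyset\}$ ensures $|\mathcal N| \geq 2$, the equivalence UCC $\iff$ Conjecture~\ref{latticeconjup} produces a basis set $B$ with $|\upcl{B}| \leq |\mathcal N|/2$. The identity then gives
\[
|B| = |\mathcal N| - |\upcl{B}| \geq \tfrac{1}{2}|\mathcal N|,
\]
which is precisely Salzborn's conclusion for $\mathcal N$.

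The main obstacle is securing the identity $|M| + |\upcl{M}| = |\mathcal N|$. All three defining ingredients of a normalised family are essential: union-closure together with $\emptyset \in \mathcal N$ ensure $\psi$ lands in $\mathcal N \setminus \{U(\mathcal N)\}$; the separating property delivers injectivity; and the cardinality equality $|U(\mathcal N)| = |\mathcal N| - 1$ upgrades injectivity to bijectivity. Once the identity is in hand, translating ``small upper cone'' into ``large basis set'' via the lattice formulation is immediate.
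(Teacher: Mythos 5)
Your argument is correct, and it fills a gap the paper deliberately leaves open: the paper proves only the direction ``Salzborn $\Rightarrow$ UCC'' and, for the converse, simply cites W\'ojcik without giving a proof. Since the paper provides no proof to compare against, I assess the argument on its own merits.

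The central identity $|M| + |\upcl{M}| = |\mathcal N|$ for every $M$ in a normalised family $\mathcal N$ is proved cleanly. All ingredients check out: $\psi(u) = U(\mathcal N_{\overline u})$ lands in $\mathcal N$ since $\mathcal N_{\overline u} \ni \emptyset$ is a non-empty subfamily whose union is a member of the union-closed $\mathcal N$; the image avoids $U(\mathcal N)$ because $u \notin \psi(u)$; the separating property yields injectivity; and $U(\mathcal N) \in \mathcal N$ (as the union of all members) combined with $|U(\mathcal N)| = |\mathcal N| - 1$ upgrades $\psi$ to a bijection onto $\mathcal N \setminus \{U(\mathcal N)\}$. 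The equivalence $M \subseteq \psi(u) \iff u \notin M$, valid for $M \in \mathcal N$, then counts $\upcl{M} \setminus \{U(\mathcal N)\}$ as $|U(\mathcal N)| - |M|$, giving the identity. The reduction to the lattice formulation is also sound: $(\mathcal N, \subseteq)$ is a finite lattice (joins are unions by union-closure; meets exist because $\emptyset \in \mathcal N$ is a common lower bound), its join-irreducibles are precisely the basis sets of $\mathcal N$ because the join is union and the zero is $\emptyset$, and $\mathcal N \neq \{\emptyset\}$ with $\emptyset \in \mathcal N$ gives $|\mathcal N| \geq 2$, so Conjecture~\ref{latticeconjup} applies. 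Combining its conclusion with the identity delivers $|B| \geq \tfrac{1}{2}|\mathcal N|$. This is a tidy route that leverages machinery already established in the paper (the lattice equivalence) rather than reconstructing W\'ojcik's argument from scratch; the identity $|M| + |\upcl{M}| = |\mathcal N|$ is exactly the structural feature of normalised families that makes the Salzborn reformulation work, and it is worth stating explicitly.
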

We omit the proof of the other direction, which may be found 
in W\'ojcik~\cite{Woj99}.

\medskip
Why do we find the Salzborn reformulation surprising? 
At first glance, normalised families seem to be very restricted
and in some sense this is true. For instance, the statement of the \ucc\
is almost trivial for them, see Theorem~\ref{falgasbound2}.
From a lattice point of view, however, normalised families 
turn out to be as general as union-closed families. We have already remarked
that $\A^*$ has the dual lattice structure of $\A$, which directly 
implies that every lattice type of a union-closed family is realisable
as a normalised family. 

We know only one application of the Salzborn formulation: 
W\'ojcik~\cite{Woj99} uses it to obtain a non-trivial lower 
bound on the maximum frequency of an element in a union-closed 
family; see the next section.

The family $\A^*$ also appears in Johnson and Vaughan~\cite{JV99},
although defined in a slightly different way.
In order to 
obtain a duality result,
 Johnson and Vaughan associate to any union-closed family $\A$ the 
dual family $\A^*$ and then observe that the 
\ucc\ is satisfied for at least one of $\A$ and $\A^*$. 
We note that the analogous results in the lattice formulation and in the 
graph formulation are almost trivial: for lattices this amounts
to considering the dual lattice, and for graphs it reduces
to the observation that no stable set may contain both endvertices of an edge.

\medskip
The majority of the results on the union-closed sets conjecture are with respect 
to the original set formulation. In the remainder of this article we stick to this 
formulation as well. However, a good part of the discussed techniques has a more
or less direct analogue in the other formulations.

\section{Obstacles to a proof}\label{sec:hardness}

There are many results on special cases of the conjecture. Amazingly, if we consider
an arbitrary union-closed family, without any special structure or information 
on the number of elements, 
(almost) the best result we have seems to be a simple observation due
to Knill:
\begin{theorem}[Knill~\cite{Kni94}]
Any union-closed family $\A$ on $n$ member-sets
has an element of frequency at least $\tfrac{n-1}{\log_2(n)}$.
\end{theorem}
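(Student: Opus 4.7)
I would argue by induction on $n=|\A|$, with base cases $n\le 2$ immediate as the unique non-empty member-set (if any) provides an element of frequency at least $1\ge (n-1)/\log_2 n$. For the induction step, choose $x\in U(\A)$ realising the maximum frequency $f$. Then $\A_{\overline x}$ is union-closed of size $n-f$, and since $x$ is globally maximal no element has frequency more than $f$ in $\A_{\overline x}$. When $|\A_{\overline x}|\le 1$ the conclusion is trivial (so that $f\in\{n-1,n\}$); otherwise, the inductive hypothesis supplies a $y$ with $|\A_{\overline x}\cap\A_y|\ge (n-f-1)/\log_2(n-f)$. Since $f\ge |\A_y|\ge |\A_{\overline x}\cap\A_y|$, rearranging yields
\[
n-1\;\le\; f\log_2\bigl(2(n-f)\bigr).
\]

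This already settles the theorem when $f\ge n/2$, as then $2(n-f)\le n$. The hard regime is $f<n/2$, in which the displayed inequality is strictly weaker than the target $n-1\le f\log_2 n$. To close the gap I would iterate the peeling step: set $\A^{(0)}=\A$ and define $\A^{(i)}:=(\A^{(i-1)})_{\overline{x_i}}$ with $x_i$ a maximum-frequency element of $\A^{(i-1)}$. The $x_i$ are distinct elements of $U(\A)$, every residual maximum frequency remains at most $f$, and the jumps telescope to $\sum_i|(\A^{(i-1)})_{x_i}|=n-1$; the crux is to bound the number of iterations by $\log_2 n$, for instance via an entropy estimate on a uniformly chosen member-set or by exploiting the shrinking lattice structure. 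A variant would apply the inductive hypothesis dually to the shifted family $\{A\setminus\{x\}:A\in\A_x\}$ of size $f$ and combine the lower bound it furnishes with the one from $\A_{\overline x}$. Either way, the main obstacle is the regime $f<n/2$---precisely where Frankl-type arguments fail---which is likely the crux of Knill's original observation.
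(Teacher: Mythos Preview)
Your inductive scheme is sound through the inequality $n-1\le f\log_2(2(n-f))$, and you correctly diagnose that this is useless when $f<n/2$. The gap is precisely the step you flag as ``the crux'': bounding the number $k$ of peeling iterations by $\log_2 n$. Your telescoping observation shows that $\{x_1,\ldots,x_k\}$ is a transversal for the non-empty sets of $\A$ and that $n-1\le kf$, so the theorem follows \emph{if} $k\le\log_2 n$. But nothing in your peeling procedure forces this: you are choosing the $x_i$ greedily by frequency, and a greedy transversal need not be small. (Indeed, if greedy peeling always halved the family you would have proved Frankl's conjecture outright.) The appeals to an ``entropy estimate'' or ``shrinking lattice structure'' are too vague to carry weight here, and the dual variant on $\{A\setminus\{x\}:A\in\A_x\}$ runs into the same regime problem.

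The paper's proof bypasses induction entirely with the one idea you are missing. Take a \emph{minimal} transversal $S$, i.e.\ an inclusion-minimal set meeting every non-empty member of $\A$. Minimality means that for each $x\in S$ some $A_x\in\A$ has $A_x\cap S=\{x\}$; union-closedness then lets you realise \emph{every} subset of $S$ as a trace $A\cap S$ (take unions of the appropriate $A_x$), so $2^{|S|}\le n$, i.e.\ $|S|\le\log_2 n$. Pigeonhole on $S$ finishes. Your peeling produces \emph{a} transversal; the missing step is to switch to a minimal one and exploit union-closedness to bound its size.
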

\begin{proof}
We may assume that $\emptyset\in\A$.
Let us choose $S\subseteq U(\A)$ minimal such that every non-empty set of $\A$ 
  intersects $S$.
Then for every $x\in S$ there is a $A\in\A$ with $A\cap S=\{x\}$; otherwise
$S-x$ would still meet every non-empty $A\in\A$, which contradicts
 the minimality of $S$.
As $\A$ is union-closed it follows that $\{A \cap S : A \in \A\} = 2^{S}$.
Hence $n \ge 2^{|S|}$ and so $|S| \le \log_2(n)$.
As every of the $n-1$ non-empty member-sets of $\A$ intersects $S$,
there is an element in $S$ that belongs to at least $(n-1) / \log_2(n)$ many member-sets of $\A$.
\end{proof}
W\'ojcik~\cite{Woj99} improved the bound to 
$\frac{2.4n}{\log_2{n}}$ for large $n$. His proof is not 
trivial, but the result is still far from Frankl's conjecture.

\medskip
Here are two observations that could be interpreted as signs
that the conjecture is, after all, perhaps not as hard as thought:
normally the most frequent element appears more often than needed, 
and there are several abundant elements.
Indeed, the powerful averaging technique discussed in Section~\ref{sec:averaging} builds solely on these facts.

These observations are due to Poonen, who also found 
exceptions to them.
Power sets are 
an obvious example for  families in which 
the maximum frequency is exactly half the size of the family. Poonen 
conjectured that, among separating families, these are the only ones.
\begin{conjecture}[Poonen~\cite{Poo92}]\label{powconj}
Let $\A$ be a separating union-closed family. Unless $\A$ is a power set,
it contains an element that appears in strictly more than half of the member-sets
of $\A$. 
\end{conjecture}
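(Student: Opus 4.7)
The plan is to argue by contradiction. Suppose $\A$ is a separating union-closed family which is not a power set on $U(\A)$, yet $|\A_x| \leq \tfrac{1}{2}|\A|$ for every $x \in U(\A)$. Using the reductions recorded in Section~\ref{sec:elementary}, I would first assume $\emptyset, U(\A) \in \A$. The crucial structural observation is that Poonen's conjecture is strictly stronger than Frankl's: power sets already saturate Frankl's bound, so any proof has to deliver both an existence statement (an abundant element) and a rigidity statement (pinning down the equality case to power sets).

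My first attempt would exploit an element $x$ realising the maximum frequency and examine the two union-closed subfamilies $\A_x$ and $\A_{\overline x}$. The assumption forces $|\A_x| = |\A_{\overline x}| = \tfrac{1}{2}|\A|$, so one seeks an explicit bijection between them; the most natural candidate is $A \mapsto A \setminus \{x\}$, which would simultaneously show that $\A$ is closed under deletion of $x$. Once this closure is established, projecting on $U(\A) - x$ yields a separating union-closed family on one fewer element to which one can induct, ultimately forcing $\A = 2^{U(\A)}$.

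An alternative route is via the Salzborn reformulation in Conjecture~\ref{salzconj}. The normalised dual $\A^*$ constructed around \eqref{salzbasis} has the dual lattice structure of $\A$, and the power set corresponds to a very symmetric combinatorial object (the Boolean lattice is self-dual and is precisely where Salzborn's bound $|B| \geq \tfrac{1}{2}|\mathcal N|$ is attained with equality by \emph{every} basis set). A strengthened Salzborn conjecture, asserting that this bound is attained only for $\A^*$ arising from power sets, would translate directly into the rigidity part of Poonen's conjecture.

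The main obstacle is exactly what makes Frankl's conjecture itself so resistant. There is no known mechanism to produce, from the union-closed axiom alone, a frequency-preserving injection $\A_x \to \A_{\overline x}$ in the tight case; and here the situation is worse because tightness is \emph{two-sided}, forcing both halves to be of equal size $\tfrac{1}{2}|\A|$. Controlling this symmetry simultaneously appears to demand structural information about union-closed families that goes beyond anything in the current toolbox — in particular, any proof of this conjecture would as a byproduct resolve Frankl's conjecture, so one should not expect a short or elementary argument.
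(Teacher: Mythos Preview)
The statement you are attempting to address is presented in the paper as an \emph{open conjecture}, not as a theorem; there is no proof in the paper to compare against. Your proposal is accordingly not a proof but an exploration of possible approaches, and your concluding assessment is accurate: Poonen's conjecture strictly implies Frankl's conjecture (for separating families, hence in general by the reduction in Section~\ref{sec:elementary}), so a complete proof would settle the \ucc\ itself.

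One remark on the first approach you outline. You write that the contradiction hypothesis ``forces $|\A_x| = |\A_{\overline x}| = \tfrac{1}{2}|\A|$'' for the most frequent element $x$, but this step already presupposes Frankl's conjecture: without it the maximum frequency could a priori lie strictly below $\tfrac{1}{2}|\A|$, and no equality case arises. So even the rigidity analysis is conditional on what you ultimately want to prove. You are right that the candidate map $A\mapsto A\sm\{x\}$ is the natural one, but as you also note, there is no known mechanism to show it lands in $\A$; union-closed families are not closed under element deletion, and turning this into a bijection is exactly the missing structural ingredient rather than an available tool.
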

A similar conjecture was offered by Renaud~\cite{Ren91}.
Moreover,
Poonen described families with a unique abundant 
element and again conjectured that these are the only ones:

\begin{conjecture}[Poonen~\cite{Poo92}]\label{manyconj}
Let $\A$ be a separating union-closed family on universe $U$.
If $\A$ contains a unique abundant element $a$ then 
\[
\A = \{\emptyset\}\cup \{B+a:B\subseteq U-a\}.
\]
\end{conjecture}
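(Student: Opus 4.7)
My plan is to characterize $\A$ by establishing, in order, the following three facts:
(i) $\emptyset \in \A$;
(ii) every non-empty $A \in \A$ contains $a$; and
(iii) for every $B \subseteq U-a$ we have $B+a \in \A$.
Together these yield exactly $\A = \{\emptyset\} \cup \{B+a : B\subseteq U-a\}$. I would first observe some easy a priori constraints that the hypotheses impose: since $\A$ is separating, the intersection $\bigcap_{A\in\A}A$ has at most one element, and any element lying in every member is automatically abundant, hence equals $a$ if non-empty. In particular, for every $x \in U-a$ there is some $S \in \A$ with $x \in S$ and some $S' \in \A$ with $x \notin S'$. Since $a$ is uniquely abundant, $|\A_a| \geq |\A|/2$ and $|\A_x| < |\A|/2$ for all $x \neq a$.

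For step (ii), suppose for contradiction that there is a non-empty $T_0 \in \A$ with $a \notin T_0$. The subfamilies $\A_a$ and $\A_{\overline a}$ are both union-closed, and $|\A_a| > |\A_{\overline a}|$. I would try to build an explicit injection $\A_a \hookrightarrow \A_{\overline a}$ to derive a contradiction; a natural candidate is $S \mapsto S \setminus \{a\}$, but this rarely lies in $\A$. A more promising variant is to restrict attention to a minimal $T_0$ and consider $S \mapsto (S \setminus \{a\}) \cup T_0$: by union-closure this lies in $\A$, it never contains $a$, and with a careful choice of $T_0$ one may hope to make it injective on $\A_a$. For step (i), if $\emptyset \notin \A$ then by the above $a$ lies in every member of $\A$, so step (ii) is vacuous and we pass directly to step (iii) on the quotient family $\A' = \{S-a : S\in\A\}$; we then need to rule out $\A' = \{\emptyset\}$ being missing, i.e.\ to show that $\{a\} \in \A$, which follows once we know $\A_a' = 2^{U-a}$ from step (iii).

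For step (iii), consider the union-closed family $\A' = \{S - a : S \in \A_a\}$ on $U-a$. It is separating (inherited from $\A$), its size is $|\A_a|$, and for every $x \in U-a$ the frequency $|\A_x'| = |\A_x|$ is strictly less than $|\A|/2$; combined with $|\A_a| > |\A|/2$ this gives $|\A_x'| < |\A'|$, but not that $x$ is non-abundant in $\A'$. I would argue that if $\A' \neq 2^{U-a}$ then some non-empty proper subset $V \subsetneq U-a$ is missing from $\A'$, and try to amplify this missing set into an element of $U-a$ that achieves abundance in $\A$, contradicting uniqueness of $a$. The main obstacle is that both steps (ii) and (iii) naturally reduce to questions about union-closed families in which no element is abundant — precisely the scenario that the union-closed sets conjecture forbids and where essentially nothing is known. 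The hard part will therefore be to extract from the hypothesis \emph{unique} abundance enough extra structural information to bypass the UCC-type bottleneck; without such leverage, a proof of Poonen's conjecture would entail a proof of the union-closed sets conjecture itself, and so must use the uniqueness of $a$ in an essential way.
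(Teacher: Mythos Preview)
The statement you are trying to prove is presented in the paper as an \emph{open conjecture} of Poonen, not as a theorem; the paper gives no proof, only the statement and some surrounding discussion. There is therefore nothing to compare your proposal against.

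Your own analysis already pinpoints why: steps~(ii) and~(iii) of your plan each reduce to showing that a certain auxiliary union-closed family must have an abundant element (or else be degenerate), and you correctly observe that this is essentially the union-closed sets conjecture itself. The uniqueness hypothesis gives you the inequality $|\A_x| < \tfrac{1}{2}|\A|$ for $x\neq a$, but when you pass to the subfamily $\A_{\overline a}$ (or to $\A' = \{S-a : S\in\A_a\}$) the relevant threshold changes to $\tfrac{1}{2}|\A_{\overline a}|$ (respectively $\tfrac{1}{2}|\A'|$), and the original inequality no longer controls this. Your proposed injection $S\mapsto (S\setminus\{a\})\cup T_0$ is not injective in general: distinct $S_1,S_2\in\A_a$ with $S_1\setminus\{a\}$ and $S_2\setminus\{a\}$ differing only inside $T_0$ map to the same set. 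So neither the injection idea for~(ii) nor the ``amplify a missing set'' idea for~(iii) goes through as stated, and you have not indicated how the uniqueness of $a$ supplies the missing leverage.

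In short, the proposal is an honest outline that correctly identifies its own gap; that gap is the whole difficulty, and the conjecture remains open.
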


If these conjectures are to be believed, then there is a bit 
of a margin when attacking the union-closed sets conjecture. 
So, why then has the conjecture withstood more than twenty years
of proof attempts? 

\medskip

The obvious first approach is to try an induction, for instance 
on the number of member-sets. If, given a union-closed family, 
we could delete one (or two) basis sets so that the maximum frequency
drops then, by induction, the original family would satisfy 
the conjecture, too. Unfortunately, this is not always possible:
in a power set of sufficient size, 
deleting one or two basis sets will never reduce
the maximum frequency. 

So, naive induction will not succeed. Often, induction can only
be made to work if the hypothesis is strengthened, usually 
by exploiting some structural insight. However, we feel that we 
are lacking in just that. We do not know
what the extremal 
families look like, those that have minimal maximum frequency
among all union-closed families of a given size. 
So far, there are not even any good candidates. We will continue this
discussion in Section~\ref{exConway}. 

A second reason why the conjecture has resisted so long
lies in the weakness of the techniques at our disposal. 
Let us briefly review the main techniques
used to prove that a given family satisfies the conjecture: 
\emph{injections}, \emph{local configurations} and \emph{averaging}. 
In averaging we try to show that the average frequency is large 
enough so that some element must be abundant. Averaging is very 
powerful but has the drawback that there are families for which the 
average is simply too low for the method to work. We discuss averaging and its
limits in Section~\ref{sec:averaging}. For the local configurations method
one strives to identify small families so that any large union-closed
family containing the small one will automatically satisfy the conjecture. 
Unfortunately, given what we know at the moment it seems doubtful 
that we will be able to show that any union-closed
family always contains such a local configuration. We will 
have a closer look at local configurations in the next section.

That leaves injections, the simplest of the three techniques. 
For an almost trivial example, consider the case when a union-closed
family $\A$ contains a singleton, that is, there is an element
$x$ so that $\{x\}\in\A$. Then 
\[
\A_{\overline x}\to\A_x,\,A\mapsto A+x
\]
defines an injection, which clearly implies that 
$2|\A_{x}|\geq |\A_x|+|\A_{\overline x}|=|\A|$.
Consequently, $x$ is abundant. In fact, we have used this method
already twice: once for lower semimodular lattices and 
then for chordal bipartite graphs. The main problem with the 
injection method is that we need to first identify an element that is 
likely to be abundant. 

Sarvate and Renaud~\cite{SR89} were probably the first to observe (in print) that 
a singleton is always abundant. In a similar way, 
one of the two elements of any $2$-set is abundant. 
The pattern, however, breaks with $3$-sets. 
Renaud and Sarvate~\cite{SR90}
describe a family with a unique smallest member-set
of $3$ elements, none of which is abundant. 
Poonen~\cite{Poo92} constructs a similar family,
a generalisation of which we present here:

For each $k \ge 3$ we define a \ucf~$\A^k$ with the property that $[k]$ is the unique smallest set, but no element of $[k]$ is abundant. 
For this, we use Poonen's notation $\A \uplus \B$ for two set families $\A$ and $\B$
to denote the family
\[
\A \uplus \B:=\{S \cup T : S\in\A,\,T\in\B\}.
\]

Now let
\begin{align*}
\A^k = \{[k]\} \cup \bigcup_{i=1}^k(\{\emptyset,\{i\},[k]\}\uplus\B^i) \cup (2^{[k]}\uplus[k+1,3k]),
\end{align*}
where
\[
\B^i=\{[k+1,3k]\sm\{2i+2\},[k+1,3k]\sm\{2i+3\}\}\text{ for every }i \in [k].
\]
Note that the set $[k]$ is the unique smallest set in $\A$.
In total, $\A^k$ contains $1 + 6k + 2^k$ many sets, but every $i \in [k]$ is contained in exactly $1+(2k+2)+2^{k-1}$ sets of $\A$.
Therefore, no element of $[k]$ is abundant.

\medskip
Poonen's family highlights one of the major obstacles on the 
way to a proof of the union-closed sets conjecture: we do not 
know where to expect an abundant element. 
However, 
there are special cases where this is known. 
We treat these cases next.

\section{Local configurations}\label{localconfigs}

Sarvate and Renaud~\cite{SR89} observed that
any singleton in a union-closed family is abundant, and of the two 
elements of a $2$-set at least one is abundant. 
This motivates the search for good \emph{local configurations}: 
a family $\mathcal L$ on few elements so that any union-closed family
$\A$ containing $\mathcal L$ has an abundant element among the elements of $\mathcal L$.
Poonen~\cite{Poo92} gives a complete characterisation of such families:

\begin{theorem}[Poonen~\cite{Poo92}]\label{poonenlocal}
Let $\mathcal L$ be a \ucf~with universe $[k]$. 
The following statements are equivalent:
\begin{enumerate}[\rm (i)]
\item
Every union-closed family $\A$ 
containing $\mathcal L$ satisfies the union-closed sets conjecture.
In particular, $\A$ has an abundant element in $[k]$.
\item
There are reals $c_1, c_2 , \ldots, c_k\geq 0$ with $\sum_{i=1}^kc_i=1$ such that 
for every \ucf~$\mathcal K \subseteq 2^{[k]}$ with $\mathcal K = \mathcal L \uplus \mathcal K$ 
it holds that
\[
\sum_{i=1}^k c_i |\mathcal K_i| \ge \tfrac{1}{2}|\mathcal K|.
\]
\end{enumerate}
\end{theorem}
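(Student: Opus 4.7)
For (ii) $\Rightarrow$ (i), the argument is a direct averaging. Suppose the weights $c$ of (ii) exist and let $\A \supseteq \mathcal L$ be a \ucf\ (with $\emptyset \in \mathcal L$, without loss of generality). Partition $\A$ by its trace outside $[k]$: for each $B \subseteq U(\A) \sm [k]$ set $\mathcal K_B = \{A \cap [k] : A \in \A,\, A \sm [k] = B\}$. Each non-empty $\mathcal K_B$ is a \ucf\ in $\pow{[k]}$, and because $L \sm [k] = \emptyset$ for every $L \in \mathcal L$ it satisfies $\mathcal K_B = \mathcal L \uplus \mathcal K_B$. Applying (ii) to each $\mathcal K_B$ and summing over $B$ gives $\sum_i c_i |\A_i| \ge \tfrac12 |\A|$, and since $\sum_i c_i = 1$ some $i \in [k]$ must be abundant in $\A$.

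The converse is harder. The valid families $\mathcal K \subseteq \pow{[k]}$ form a finite collection, say $\{\mathcal K_1,\ldots,\mathcal K_m\}$, so von Neumann's minimax theorem yields
\[
\max_{c \in \Delta_k} \min_j \sum_i c_i \frac{|(\mathcal K_j)_i|}{|\mathcal K_j|}
\;=\;
\min_\nu \max_i \sum_j \nu_j \frac{|(\mathcal K_j)_i|}{|\mathcal K_j|}.
\]
If (ii) failed, the right-hand side would be strictly less than $\tfrac12$, producing a probability vector $\nu$ (rational, by a density argument) with $\sum_j \nu_j |(\mathcal K_j)_i|/|\mathcal K_j| < \tfrac12$ for every $i \in [k]$. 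It then suffices to manufacture from $\nu$ a \ucf\ $\A^\star \supseteq \mathcal L$ in which no $i \in [k]$ is abundant, thereby contradicting (i).

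The construction is a blow-up. Introduce fresh markers $F = \{f_1, \ldots, f_T\}$ disjoint from $[k]$, fix an assignment $\pi \colon [T] \to [m]$ whose fibre sizes $b_j = |\pi^{-1}(j)|$ are proportional to $\nu_j/|\mathcal K_j|$, and put $\mathcal M = \mathcal L \join \mathcal K_1 \join \cdots \join \mathcal K_m$. Define
\[
\A^\star \;=\; \mathcal L \;\cup\; \bigcup_{t=1}^T \{K \cup (F \sm \{f_t\}) : K \in \mathcal K_{\pi(t)}\} \;\cup\; \{K \cup F : K \in \mathcal M\}.
\]
A short case analysis shows $\A^\star$ is union-closed; the only nontrivial case is the union of two slices with $t \neq t'$, whose tag collapses to all of $F$ and whose $[k]$-parts land in $\mathcal M$. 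For $T \ge 2$ the three pieces are pairwise disjoint (distinguished by their trace on $F$), so $|\A^\star| = |\mathcal L| + \sum_j b_j|\mathcal K_j| + |\mathcal M|$ and $|\A^\star_i| = |\mathcal L_i| + \sum_j b_j |(\mathcal K_j)_i| + |\mathcal M_i|$ for $i \in [k]$.

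The main obstacle is making the blow-up large enough to drown out the constant pieces. If the $b_j$ are chosen as roughly $C\nu_j/|\mathcal K_j|$ for a large parameter $C$, the middle sums scale like $C$ while $|\mathcal L|, |\mathcal M|, |\mathcal L_i|, |\mathcal M_i|$ stay bounded, so $|\A^\star_i|/|\A^\star|$ converges to $\sum_j \nu_j |(\mathcal K_j)_i|/|\mathcal K_j|$, which is strictly below $\tfrac12$ by the choice of $\nu$. Since $[k]$ is finite, for $C$ sufficiently large no $i \in [k]$ is abundant in $\A^\star$, and (i) is contradicted. Beyond this scaling argument, everything else reduces to routine verification.
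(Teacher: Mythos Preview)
The survey does not actually prove this theorem; it is stated with attribution to Poonen and then used as a tool. Your argument is correct and is essentially Poonen's original proof: the direction (ii)$\Rightarrow$(i) is the fibrewise averaging over traces outside $[k]$, and the converse is linear-programming duality (here phrased as von Neumann minimax) followed by a blow-up that realises the dual witness as a genuine \ucf\ containing $\mathcal L$ with no abundant element in $[k]$.

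Two cosmetic points. First, your $\mathcal M = \mathcal L \join \mathcal K_1 \join \cdots \join \mathcal K_m$ should be spelled out as the union-closure of $\mathcal L \cup \mathcal K_1 \cup \cdots \cup \mathcal K_m$, since in this paper $\join$ is reserved for lattice joins. Second, the passage to rational $\nu$ is cleaner if you note that the game has rational payoff matrix, so an optimal mixed strategy can be taken rational; the density argument also works but is slightly roundabout.
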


We stress that~(ii) is indeed a local condition: for fixed $k$ there are only finitely
many such families~$\mathcal K$.
As an application of his theorem, 
Poonen showed that the \ucf~consisting of a $4$-set together with any three 
distinct $3$-subsets satisfies the conditions of his theorem.
This was later generalised by Vaughan~\cite{Vaug04}
to three distinct $3$-sets with a non-empty common intersection.
As mentioned in Section~\ref{sec:GraphResults}, Vaughan's result is 
used to prove Frankl's conjecture for subcubic bipartite graphs.

A \ucf~$\mathcal L$ as in Theorem~\ref{poonenlocal} 
is called \emph{Frankl-complete} by Vaughan~\cite{Vaug02}, \emph{FC} for short.
Several FC-families are listed in~\cite{Vaug02}, for example a $5$-set together with all 
its $4$-subsets or a $6$-set with all $5$-subsets and eight $4$-subsets.
The list was later extended by Morris~\cite{Mor04},
who, in particular, completely characterised the FC-families on at most $5$ elements.

To study FC-families in a more quantitative way, 
Morris~\cite{Mor04} introduced the function $\mbox{FC}(k,m)$ defined as the smallest $r$ 
for which 
the set of every $r$ of the $k$-sets in $[m]$ generates an FC-family.
He showed  that $\lfloor \tfrac{m}{2} \rfloor +1 \le \mbox{FC}(3,m)$, while
Vaughan~\cite{Vaug04} gave an upper bound of $\mbox{FC}(3,m) \le \tfrac{2m}{3}$.
A proof of Morris' conjecture that $\mbox{FC}(3,m) = \lfloor \tfrac{m}{2} \rfloor +1$ was announced by Vaughan~\cite{Vaug05}, 
but has apparently never been published.

Mari\'c, \v Zivkovi\'c and Vu\v ckovi\'c~\cite{MZV12} verified some known FC-families 
and found a new one using the automatic proof assistant Isabelle/HOL.
For this, they formalised the condition of FC-families to enable a computer search.
As a result, we know now
that all families containing four $3$-subsets of a $7$-set are FC-families.

\subsection{Small finite families}\label{sec:FirstFiniteCases}

The union-closed sets conjecture has been verified for  families
on few member-sets or few elements.
The current best results use local configurations
to reduce the number of special cases substantially.

With respect to the size of the universe, the conjecture has to-date
been verified up to $m=12$:
\begin{theorem}[\v Zivkovi\'c and Vu\v ckovi\'c~\cite{ZV12}]\label{m=12}
The union-closed sets conjecture holds for \ucfs~on at most $12$ elements.
\end{theorem}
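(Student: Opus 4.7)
The plan is an exhaustive computer-assisted verification, where the role of mathematics is to shrink the search space enough to make the computation feasible. First I would invoke the reductions already collected in the paper to narrow attention to a separating $\A$ with $\emptyset\in\A$, universe exactly $[m]$ for some $m\le 12$, size $50<|\A|<\tfrac{2}{3}2^m$, and no singleton or $2$-set member. Second, I would prune by FC-families: by Theorem~\ref{poonenlocal} and the concrete FC-families of Poonen, Vaughan, Morris and Mari\'c--\v Zivkovi\'c--Vu\v ckovi\'c, any $\A$ that contains one of them automatically satisfies the conjecture. In particular $\A$ may be assumed to contain no three $3$-sets with a common element and none of the currently known FC-configurations on $\le 7$ points.

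\medskip

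With these structural restrictions in hand, the core of the proof is an enumeration organised by basis sets. Representing each subset of $[12]$ as a $12$-bit integer, I would grow candidate families by adding basis sets in non-decreasing cardinality, keeping only lexicographically smallest representatives under the action of $S_{12}$ to break isomorphism. At each node of the search tree I would maintain the union-closure $\A$, the vector of frequencies $|\A_i|$, and upper bounds on how much each frequency can still grow when further basis sets are added. A branch is killed as soon as either some element is already abundant, or one of the admitted FC-families appears as a subfamily, or the remaining budget makes it impossible for every frequency to stay strictly below $|\A|/2$. When the search reaches a complete union-closed family, the abundance test is applied directly.

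\medskip

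The main obstacle is, of course, feasibility. The raw number of union-closed families on $12$ elements is far beyond brute force, and FC-family pruning is effective only insofar as one actually knows enough FC-families and can test membership quickly. The hard part of the proof will therefore not be a single clever argument but a combination of (i) proving additional structural necessary conditions on a minimal counterexample, such as lower bounds on the smallest non-trivial basis size or constraints on the number of $3$-sets and $4$-sets, (ii) designing an efficient, bit-parallel FC-test together with a good branching order on the basis sets, and (iii) handling the surviving hard cases either by symmetry-broken exhaustive enumeration or by encoding them as SAT instances in the style of Mari\'c--\v Zivkovi\'c--Vu\v ckovi\'c. If any of these components is weak, the computation simply will not terminate within reasonable time, so the informal notion of ``proof'' here really is a package consisting of mathematical reductions plus a verified program plus a finite log of solver outputs.
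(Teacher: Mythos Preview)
The paper is a survey and does not itself prove Theorem~\ref{m=12}; it only cites \v Zivkovi\'c and Vu\v ckovi\'c and sketches the lineage of the method in Section~\ref{sec:FirstFiniteCases}. So there is no detailed proof in the paper to compare against, only the description that the actual argument follows Markovi\'c and Bo\v snjak--Markovi\'c: one assigns non-negative weights to the elements, derives sufficient weight inequalities in the spirit of Theorem~\ref{poonenlocal}, and then uses a computer to produce enough FC-families and ``more general local configurations'' so that every family on at most $12$ elements is forced to contain one. In other words, the search in the cited work is over \emph{local configurations}, not over union-closed families themselves. Your proposal instead enumerates candidate families (via basis sets) and uses FC-families only as a pruning device. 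Both are computer-assisted and both lean on FC-families, but the orientation is different: their method succeeds once the catalogue of local configurations is provably exhaustive, whereas yours succeeds once the family-enumeration tree has been fully explored. Your route is conceptually simpler but puts far more pressure on the enumeration; theirs requires proving coverage lemmas but avoids ever touching the astronomically many union-closed families on $[12]$.

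One concrete slip: you invoke the bound $|\A|>50$, but in the paper that is Corollary~\ref{n=50}, which is \emph{derived from} Theorem~\ref{m=12} via Lemma~\ref{4q-1}. Using it here is circular. What you may legitimately use is the previously established case $m\le 11$ of Bo\v snjak--Markovi\'c together with Lemma~\ref{4q-1}, which gives only $|\A|\ge 4\cdot 12-1=47$ for a minimal counterexample on $12$ elements. Similarly, be careful that the Balla--Bollob\'as--Eccles bound $|\A|<\tfrac{2}{3}2^m$ is a 2013 result and was not available to \v Zivkovi\'c and Vu\v ckovi\'c; it is fine for \emph{a} proof, but not for reconstructing \emph{their} proof.
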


The following result, that has not been improved upon
in the last twenty years, allows to leverage bounds on the universe size
to bounds on the number of member-sets:

\begin{lemma}[Lo Faro~\cite{LF94b}]\label{4q-1}
Under the assumption that the union-closed sets conjecture fails, let $m$ denote 
the minimum cardinality of $|U(\A)|$ taken over all counterexamples $\A$ to the union-closed sets conjecture.
Then any counterexample has at least $4m-1$ member-sets.
\end{lemma}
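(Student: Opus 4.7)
By hypothesis the UCC fails, so $m$ is well-defined and some counterexample $\A$ realises $|U(\A)|=m$. I will prove the bound $|\A|\ge 4m-1$ for such a minimum-universe counterexample $\A$; for an arbitrary counterexample $\A'$ with $|U(\A')|>m$ the same strategy applies with $m$ replaced by $|U(\A')|$, which produces the stronger conclusion $|\A'|\ge 4|U(\A')|-1\ge 4m-1$. So fix $\A$ with $|U(\A)|=m$ and write $n=|\A|$.

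\textbf{The slices $\A_{\overline x}$ satisfy the conjecture.} For each $x\in U(\A)$, $\A_{\overline x}$ is union-closed (Section~\ref{sec:elementary}) with universe contained in $U(\A)\sm\{x\}$, so of size at most $m-1<m$. By minimality of $m$, $\A_{\overline x}$ is not a counterexample. The degenerate option $\A_{\overline x}=\{\emptyset\}$ forces every non-empty member-set of $\A$ to contain $x$, giving $|\A_x|\ge n-1\ge n/2$ (for $n\ge 2$), which is incompatible with $\A$ being a counterexample. Thus $\A_{\overline x}$ has an abundant element, i.e.\ there is $y(x)\neq x$ with $|\A_{\overline x,y(x)}|\ge\tfrac12|\A_{\overline x}|$.

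\textbf{From local abundance to a frequency inequality.} Because $\A$ is a counterexample, $|\A_z|<n/2$ for every $z\in U(\A)$, and in particular $|\A_{\overline x}|>n/2$. Plugging in,
\[
|\A_{y(x)}|\ \ge\ |\A_{\overline x,y(x)}|\ \ge\ \frac{n-|\A_x|}{2}\ >\ \frac{n}{4},
\]
so every element in the image of $y(\cdot)$ has frequency strictly between $n/4$ and $n/2$. Rearranging yields the pointwise inequality
\[
|\A_x|+2\,|\A_{y(x)}|\ \ge\ n\qquad\text{for every }x\in U(\A).
\]
Summing over the $m$ elements of $U(\A)$,
\[
\sum_{x\in U(\A)}|\A_x|\ +\ 2\sum_{x\in U(\A)}|\A_{y(x)}|\ \ge\ mn.
\]
The first sum equals $\sum_{A\in\A}|A|$; the second groups as $\sum_y d(y)|\A_y|$ with $d(y)=|\{x:y(x)=y\}|$ and $\sum_y d(y)=m$. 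The plan is now to exploit $|\A_y|<n/2$ together with the narrowness of the window $(n/4,n/2)$ and the constraint $y(x)\neq x$ to squeeze out $n\ge 4m-1$.

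\textbf{Main obstacle.} The heart of the argument lies in this last aggregation: the crude bounds $|\A_x|,|\A_{y(x)}|<n/2$ on their own only return the vacuous $n\le 3n/2$. One therefore has to combine the inequalities structurally: the fact that $y(x)\neq x$ and that every element of $U(\A)$ participates, together with the restriction of the image frequencies to a window of width less than $n/4$, must be used simultaneously. I expect Lo~Faro's proof closes the gap either by iterating the reduction inside $\A_{\overline x,\overline{y(x)}}$ (whose universe has at most $m-2<m$ elements and hence again satisfies the UCC) and peeling off elements one at a time, or by a double count that tracks the multiplicities $d(y)$ more carefully than the naive estimate. This combinatorial bookkeeping is the genuinely delicate part of the argument, while everything preceding it is a direct consequence of the minimality of~$m$.
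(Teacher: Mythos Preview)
The survey does not include a proof of this lemma; it merely cites Lo~Faro and notes the rediscovery by Roberts and Simpson. So there is nothing in the paper to compare your attempt against, and the question becomes whether the proposal stands on its own. It does not: there are two genuine gaps.

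First, the reduction in your Setup is broken. You assert that for a counterexample $\A'$ with $|U(\A')|=m'>m$ ``the same strategy applies with $m$ replaced by $|U(\A')|$''. But the only leverage you possess is that a family with universe of size strictly less than $m$ satisfies the conjecture, since $m$ is the \emph{global} minimum over all counterexamples. For $\A'$ with $m'>m$ the slice $\A'_{\overline x}$ has universe of size at most $m'-1\ge m$ and may itself be a counterexample; nothing lets you conclude it has an abundant element. Even if your argument for a minimum-universe counterexample $\A$ were complete, it would only yield $|\A|\ge 4m-1$ for that particular $\A$, which does not imply the lemma: the minimum-$n$ counterexample need not have minimum universe. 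The standard remedy is to work instead with a minimum-$n$ counterexample. Then both $\A_{\overline x}$ and $\{A-x:A\in\A_x\}$ have fewer than $n$ sets and hence satisfy the conjecture by $n$-minimality, regardless of their universe size; one then shows $|U(\A)|\le(n+1)/4$ for this $\A$ and reads off $m\le|U(\A)|\le(n+1)/4$, which gives $n_0=n\ge 4m-1$ and hence the bound for every counterexample.

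Second---and you say so yourself---the main step is missing. The inequality $|\A_x|+2|\A_{y(x)}|\ge n$ is correct, but summing over $x$ gives $\sum_x|\A_x|+2\sum_x|\A_{y(x)}|\ge mn$ with both sums trivially below $mn/2$, and no bookkeeping on the multiplicities $d(y)$ rescues this. Your speculative iteration into $\A_{\overline x,\overline{y(x)}}$ halves the family at each step, so sizes drop below anything useful long before $m$ is forced into the picture. The published arguments use the $n$-minimal setup and exploit, beyond the abundance in $\A_{\overline x}$, that $n$ is odd and that the maximum frequency is \emph{exactly} $(n-1)/2$ (otherwise deleting a basis set would yield a smaller counterexample); the interaction between the two halves $\A_x$ and $\A_{\overline x}$, both of which now satisfy the conjecture, is then exploited in a way your single summed inequality cannot capture. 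As written, the proposal is a reasonable opening move but not a proof.
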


The lemma was later rediscovered by Roberts and Simpson~\cite{RSi10}.
Together with Theorem~\ref{m=12} we obtain:

\begin{corollary}\label{n=50}
The union-closed sets conjecture holds for \ucfs~with at most 50 sets.
\end{corollary}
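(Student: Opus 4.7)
The plan is to deduce the corollary from the two results immediately preceding it by a short counting argument, with no new ideas required. I would argue by contradiction: suppose some counterexample to the union-closed sets conjecture exists, and let $m$ denote the minimum value of $|U(\A')|$ taken over all such counterexamples $\A'$. The goal is to show that every counterexample must then have strictly more than $50$ member-sets, which would preclude the existence of a counterexample on at most $50$ sets.

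First, I would apply Theorem~\ref{m=12} to bound $m$ from below. Since the conjecture holds for every union-closed family with $|U(\A)| \le 12$, no such family can be a counterexample, forcing $m \ge 13$.

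Next, I would plug this into Lemma~\ref{4q-1}. The lemma tells us that \emph{every} counterexample (not merely the one attaining the minimum universe size) has at least $4m - 1$ member-sets. Combining with $m \ge 13$ gives that every counterexample has at least $4 \cdot 13 - 1 = 51$ member-sets. Hence no union-closed family with $|\A| \le 50$ can be a counterexample, which is precisely the statement of the corollary.

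The argument is entirely mechanical once Theorem~\ref{m=12} and Lemma~\ref{4q-1} are in hand, so there is no genuine obstacle; the substantive work has already been done in those two results. The only thing worth flagging is the quantifier in Lemma~\ref{4q-1}: the bound $4m-1$ applies to \emph{all} counterexamples, with $m$ defined as the global minimum universe size, so it is legitimate to combine it with the lower bound on $m$ coming from Theorem~\ref{m=12}.
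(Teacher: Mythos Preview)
Your argument is correct and is exactly the intended one: the paper does not spell out a proof but simply says the corollary follows from Theorem~\ref{m=12} together with Lemma~\ref{4q-1}, which is precisely the combination $m\ge 13$ and $4m-1\ge 51$ that you wrote out. Your remark about the quantifier in Lemma~\ref{4q-1} is also on point.
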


Various authors verified the conjecture for small values of $n$ and $m$,
where as usual $n$ is the number of member-sets and $m$ the size of the universe. 
The first were Sarvate and Renaud~\cite{SR89} who treated a close 
variant  that excludes the empty set. In a first paper 
they covered all cases 
up to $n\leq 11$; in  
Sarvate and Renaud~\cite{SR90} the case analysis was pushed up to $n\leq 19$.
Using his Theorem~\ref{poonenlocal}, Poonen 
improved the bounds to  $m \le 7$ and $n \le 28$.
This was followed by
Lo Faro~\cite{LF94b}, who settled the union-closed sets conjecture for $m \le 9$ and $n \le 36$.
For this, he investigated several necessary conditions on a minimal counterexample,
among them Lemma~\ref{4q-1} above. Roberts~\cite{Rob92} shows the conjecture
up to $n\leq 40$.

Using the list of known FC-families, Morris~\cite{Mor04} proved the union-closed sets conjecture for families with $m \le 9$ and $n \le 36$, apparently unaware of the older result by Lo Faro~\cite{LF94b}.
Nevertheless, there is merit in Morris' proof as 
it showcases how FC-families may be used to substantially reduce the number of cases.
This method is at the heart of all subsequent work in this direction.

In order to prove the conjecture for $m \le 10$, 
Markovi\'c~\cite{Mar07} 
imitated the method of Theorem~\ref{poonenlocal}:
he assigns non-negative weights to the elements of $\A$ and extends this to the member-sets of $\A$.
He then observes  that a total weight of the member-sets of at least 
$\tfrac{1}{2} n$ times the weight of the universe is sufficient for the union-closed sets conjecture.
As a by-product of this method, Markovi\'c discovered a number 
of new FC-families.

Bo\v{s}njak and Markovi\'c~\cite{BM08} improve upon~\cite{Mar07}
by developing more general local configurations that allow them to verify the 
conjecture up to $m=11$. 
With a very similar method and the use of a computer, 
\v Zivkovi\'c and Vu\v ckovi\'c~\cite{ZV12} pushed this to $m \le 12$.

\section{Averaging}\label{sec:averaging}

Obviously, a union-closed family $\A$ has an element of 
frequency $\geq\tfrac{1}{2}|\A|$ if
the \emph{average frequency} is 
at least $\tfrac{1}{2}|\A|$. In other words, if
\begin{equation}\label{avfrequency}
\frac{1}{|U(\A)|} \cdot \sum_{u\in U(\A)}|\A_u|\geq \frac{1}{2}|\A|,
\end{equation}
then $\A$ satisfies the union-closed sets conjecture. 

So far, not much is gained. 
Calculating $\sum_{u\in U(\A)}|\A_u|$ directly 
is clearly out of question, as this would presuppose 
knowledge about the individual frequencies $|\A_u|$.
Fortunately, this is not necessary, as the sum of frequencies
can be determined indirectly with a
simple double-counting argument:
\begin{equation}
\sum_{u\in U(\A)}|\A_u| = \sum_{A\in\A}|A|.
\label{eq:double-counting}
\end{equation}
This identity is the heart of the averaging method. The total 
set size is usually much easier to control, and in some cases
may be estimated quite well.

Combining~\eqref{avfrequency} and~\eqref{eq:double-counting}, 
a  condition equivalent to~\eqref{avfrequency} is that 
\[
\frac{1}{|\A|} \cdot \sum_{A\in\A}|A| \ge \frac{1}{2}|U(\A)|.
\]
That is, if the \emph{average set size} of $\A$ is at least half the size 
of the universe then $\A$ again satisfies the union-closed sets conjecture.

As discussed in Section~\ref{sec:hardness}, it is not obvious where 
to look for an abundant element. 
The averaging method has the clear advantage that 
it simply sidesteps this  obstacle. 
In this section we describe how both~\eqref{avfrequency} and~\eqref{eq:double-counting} 
lead to some of the strongest results on the union-closed sets conjecture.

\subsection{Large families}

In a clearly overlooked paper, Nishimura and Takahashi~\cite{NT96} prove for the first 
time that the union-closed sets conjecture always holds for large families. 
Their proof uses the average set size argument: 
it is shown that the average set size is  greater than $\tfrac{m}{2}$, 
which implies that there is an abundant element. 

\begin{theorem}[Nishimura and Takahashi~\cite{NT96}]
Let $\A$ be a \ucf~of more than $2^m-\tfrac{1}{2}\sqrt{2^m}$ member-sets
on a universe of size $m$. Then $\A$ satisfies the union-closed sets conjecture. 
\end{theorem}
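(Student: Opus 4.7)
The plan is to apply the averaging strategy of Section~\ref{sec:averaging}: I will show that the average set size of $\A$ exceeds $m/2$, so that by the double-counting identity~\eqref{eq:double-counting} the average frequency over $U(\A)$ exceeds $n/2$ and an abundant element exists. Relabelling so that $U(\A)=[m]$, I set $\B:=\pow{[m]}\sm\A$ for the family of \emph{missing} sets, write $N:=|\B|=2^m-n<2^{m/2-1}$, and recall $\sum_{S\in\pow{[m]}}|S|=m\cdot 2^{m-1}$. The target $\sum_{A\in\A}|A|\ge \tfrac{m}{2}n$ is thus equivalent to
\[
\sum_{B\in\B}|B|\le \tfrac{m}{2}N,
\]
i.e.\ to showing that the average size of a missing set is at most $m/2$.

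The key structural step exploits union-closedness as follows: for every non-empty $B\in\B$ there is an element $x_B\in B$ such that \emph{every} $B'\subseteq B$ containing $x_B$ lies in $\B$. Indeed, let $U_B:=\bigcup\{A\in\A:A\subseteq B\}$. By union-closedness $U_B\in\A$ whenever the collection is non-empty, and since $B\notin\A$ this forces $U_B\subsetneq B$, so I may pick $x_B\in B\sm U_B$ (any $x_B\in B$ works if no subset of $B$ lies in $\A$). Should some $B'\in\A$ satisfy $x_B\in B'\subseteq B$, then $B'$ would appear in the collection defining $U_B$ and thus $x_B\in U_B$, a contradiction.

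The claim plants $2^{|B|-1}$ distinct members of $\B$ into the interval $\{B':x_B\in B'\subseteq B\}$, so $N\ge 2^{|B|-1}$. The hypothesis $N<2^{m/2-1}$ then forces $|B|<m/2$ for every non-empty $B\in\B$; since $|B|$ and $m$ are integers this sharpens to $2|B|-m\le -1$. Summing over $\B$ (the empty set, if missing, contributes nothing) yields
\[
\sum_{B\in\B}(2|B|-m)\le -N,\quad\text{equivalently}\quad \sum_{B\in\B}|B|\le\tfrac{m-1}{2}N<\tfrac{m}{2}N,
\]
as required. Substituting back, $\sum_{A\in\A}|A|=m\cdot 2^{m-1}-\sum_{B\in\B}|B|>\tfrac{m}{2}(2^m-N)=\tfrac{m}{2}n$, and~\eqref{eq:double-counting} then produces an element of $[m]$ of frequency strictly above $n/2$.

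The only real hurdle is spotting the right structural claim: once we recognise that a missing $B$ drags an entire `upper cone' through $x_B$ into $\B$, the bound $|B|<m/2$ is immediate from $N<\tfrac{1}{2}\sqrt{2^m}=2^{m/2-1}$, and the averaging closes out by routine arithmetic.
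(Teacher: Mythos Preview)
Your proof is correct and follows the same high-level averaging strategy as the paper: show that every missing set has size strictly below $m/2$, so the average set size of $\A$ is at least $m/2$, whence an abundant element exists by double counting.

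The structural step, however, is genuinely different. The paper argues that if $S\notin\A$ then for every subset $R\subseteq S$ at most one of $R$ and $S\sm R$ can lie in $\A$ (else $S=R\cup(S\sm R)\in\A$), so at least half of the $2^{|S|}$ subsets of $S$ are missing. You instead take $U_B:=\bigcup\{A\in\A:A\subseteq B\}\subsetneq B$, pick $x_B\in B\sm U_B$, and observe that the entire `upper cone' $\{B':x_B\in B'\subseteq B\}$ of size $2^{|B|-1}$ lies in $\B$. Both arguments reach the identical bound $N\ge 2^{|B|-1}$, but yours is constructive---it exhibits a specific family of missing sets---whereas the paper's is a pure pairing count. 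The paper's pairing is marginally slicker (one line), while your observation that $\A$ restricted to $2^B$ has a `ceiling' $U_B$ is a nice structural fact in its own right. The closing arithmetic in your write-up is also a touch more explicit than the paper's, which simply asserts that removing only small sets cannot lower the average below $m/2$.
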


\begin{proof}
Suppose there is a set $S \subseteq U(\A)$ with $S \notin \A$ 
but $|S| \ge \frac{m}{2}$.
Then for any subset $R \subseteq S$ with $R \in \A$ it holds that $S \setminus R \notin \A$.
Thus, at least half of the subsets of $S$ are missing in $\A$.
This gives $|\A| \le 2^m - \frac{1}{2} \cdot 2^{\frac{m}{2}}$, a contradiction.
Hence, every set $S \subseteq U(\A)$ of size at least $\tfrac{m}{2}$ is contained in $\A$.
This means that the average set size is at least $\tfrac{m}{2}$, finishing the proof.
\end{proof}

Cz\'edli~\cite{Cze09} employed some involved lattice-theoretic arguments 
to push the bound from 
$2^m-\tfrac{1}{2}\sqrt{2^m}$ to $2^m-\sqrt{2^m}$.
A weaker result  than Nishimura and Takahashi's was proved by Gao and Yu~\cite{GY98}.
Recently, a serious improvement of the above bound was given by Balla, Bollob\'as and Eccles~\cite{BBE13}, 
which we present in Section~\ref{secupcompression}.

\subsection{Bounds on the average}\label{secbounds}

Averaging does not always work. It is easy to construct union closed 
families with  an average frequency and average set size that is too 
low to deduce the union-closed sets conjecture. 
Reimer~\cite{Rei03} gave a bound on the average set size that
is in some respect best possible. 

\begin{theorem}[Reimer~\cite{Rei03}]\label{Reimertheorem}
Let $\A$ be a \ucf~on $n$ sets. Then
\begin{equation}
\frac{1}{n} \cdot \sum_{A \in \A} |A| \ge \frac{\log_2 n}{2}.
\label{eq:Reimer}
\end{equation}
\end{theorem}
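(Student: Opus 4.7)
The plan is to prove Theorem~\ref{Reimertheorem} by induction on $n = |\A|$. The base case $n=1$ is trivial since $\log_2 1 = 0$. For the inductive step, pick any $x \in U(\A)$ and split $\A = \A_x \cup \A_{\overline x}$. Both subfamilies are union-closed, and moreover $\A_x^- := \{A - x : A \in \A_x\}$ is union-closed with $|\A_x^-| = |\A_x|$.

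Applying the inductive hypothesis to $\A_x^-$ (so that the $+1$ contributed by $x$ to each set in $\A_x$ is pulled out) and to $\A_{\overline x}$, with $n_1 = |\A_x|$ and $n_0 = |\A_{\overline x}|$, I obtain
\[
\sum_{A \in \A}|A| \;=\; n_1 + \sum_{B \in \A_x^-}|B| + \sum_{A \in \A_{\overline x}}|A| \;\geq\; n_1 + \tfrac{n_1}{2}\log_2 n_1 + \tfrac{n_0}{2}\log_2 n_0.
\]
Setting $p = n_1/n$ and expanding, the right-hand side equals $\tfrac{n \log_2 n}{2} + n\bigl(p - \tfrac{1}{2}H(p)\bigr)$, where $H(p) = -p\log_2 p - (1-p)\log_2(1-p)$ is binary entropy. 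Thus the inductive step succeeds precisely when $H(p) \leq 2p$. Since $H(p) \leq 1 \leq 2p$ for $p \geq 1/2$, the induction goes through whenever $\A$ contains some element $x$ of frequency at least $n/2$.

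The main obstacle is the complementary regime, in which every element has frequency strictly less than $n/2$: here $H(p)$ can exceed $2p$ (for instance $H(0.01) \approx 0.08 > 0.02$), so the naive split fails. Finding an element with $|\A_x| \geq n/2$ is of course exactly Frankl's conjecture, so a purely inductive argument of this shape cannot be unconditional. The real content of Reimer's theorem must therefore lie in a deeper use of union-closedness. The route I would pursue is a Shannon-entropic one: taking $X$ uniform on $\A$, subadditivity gives $\log_2 n = H(X) \leq \sum_x H(p_x)$, and I would try to upgrade this via a carefully ordered chain-rule $H(X) = \sum_i H(X_{x_i}\mid X_{x_1},\ldots,X_{x_{i-1}})$, choosing the order so that union-closedness forces many conditional bits to be either deterministic or biased.

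The hard part is exactly this last step: translating an entropy-type inequality $\sum_x H(p_x) \geq \log_2 n$ into the mean-type inequality $\sum_x p_x \geq (\log_2 n)/2$. In the absence of a globally abundant element, one would need a structural consequence of union-closedness—possibly an injection from $\A_{\overline x}$ into $\A_x$ along a well-chosen chain, or a compression that replaces $\A$ by a family of the same cardinality but larger average set size—to avoid losing a factor of two at the entropy-to-mean conversion. Identifying that structural lemma is, to my mind, the real obstacle.
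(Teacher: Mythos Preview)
Your analysis is honest and the gap you identify is real: the inductive split closes only when some element has frequency at least $n/2$, which is precisely Frankl's conjecture, so as it stands the argument is circular. The entropy route you sketch runs into the same wall: subadditivity gives $\sum_x H(p_x) \geq \log_2 n$, but converting this to $\sum_x p_x \geq \tfrac12\log_2 n$ again asks for $2p_x \geq H(p_x)$ on average, which is no easier than what you started with. So neither the induction nor the entropy chain-rule can be completed without an external structural input.

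The paper supplies that input via \emph{up-compression}, which you correctly guessed at but did not develop. For each element $i$ set $u_i(A) = A+i$ if $A+i \notin \A$ and $u_i(A) = A$ otherwise; one checks that $u_i(\A)$ remains union-closed. Iterating over all elements produces an up-set $u(\A)$ of the same cardinality. For an up-set $\mathcal U$ on a universe of size $m$ there is the exact identity $2\sum_{A\in\mathcal U}|A| = m|\mathcal U| + |\eb(\mathcal U)|$, where $\eb(\mathcal U)$ is the edge boundary. Reimer then proves two bounds on the total growth $\sum_A |u(A) \sm A|$: it is at most $|\eb(u(\A))|$, and it is also at most $|\A|(m - \log_2 |\A|)$. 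Substituting, the $m|\A|$ and edge-boundary terms cancel and one is left with $2\sum_{A\in\A}|A| \geq |\A|\log_2|\A|$. Union-closedness enters only to ensure that each $u_i$ preserves it and that the growth estimates hold; no abundant element is ever invoked, which is exactly what your approach was missing.
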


The result is too weak for Frankl's conjecture
as usually $\log_2(n)<m$. 
In terms of the average frequency, Reimer's bound reads as
\begin{equation}\label{Reimeravg}
\frac{1}{m} \cdot \sum_{u\in U(\A)}|\A_u| \ge \frac{\log_2 n}{m} \cdot \frac{n}{2}.
\end{equation}
We discuss the beautiful proof of Theorem~\ref{Reimertheorem} in Section~\ref{secupcompression}.

We now focus on separating \ucfs, 
where for every two elements there is a set
containing exactly one of them.
As explained in Section~\ref{sec:elementary}, for the purpose of the union-closed sets conjecture it is not a restriction to consider only separating families.

\begin{theorem}[Falgas-Ravry~\cite{FR11}]\label{thm:Ravry}
Let $\A$ be a separating \ucf~on $m$ elements.
Then 
\begin{equation}\label{falgasbound}
\frac{1}{m} \cdot \sum_{u\in U(\A)}|\A_u| \ge \frac{m+1}{2}.
\end{equation}
\end{theorem}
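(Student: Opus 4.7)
The plan is to establish the equivalent statement that $\sum_{A \in \A} |A| \ge \binom{m+1}{2}$ via the double-counting identity \eqref{eq:double-counting}, by exhibiting $m+1$ distinct member-sets of $\A$ whose sizes already add up to at least $\binom{m+1}{2}$. As usual, we may assume $\emptyset \in \A$ without affecting either side of \eqref{falgasbound}. Write $M := U(\A)$, which lies in $\A$ because $\A$ is union-closed, and for each $i \in [m]$ let $M_i := \bigcup \A_{\overline{i}}$, the largest member-set of $\A$ that avoids~$i$; this is well-defined in $\A_{\overline{i}}$ since $\A_{\overline{i}}$ is union-closed and non-empty (containing $\emptyset$).

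The first step is to translate separation into incidence information about the $M_i$. Given distinct $i,j \in [m]$, a separating set $A$ with $|A\cap\{i,j\}|=1$ lies in $\A_{\overline{j}}$ or $\A_{\overline{i}}$ and contains the other element, so at least one of $i \in M_j$ or $j \in M_i$ holds. Counting the pairs $(i,j)$ with $i \neq j$ and $i \in M_j$ in two ways,
\[
\sum_{j=1}^m |M_j| \;=\; \bigl|\{(i,j): i\neq j,\ i\in M_j\}\bigr| \;\ge\; \binom{m}{2},
\]
since each of the $\binom{m}{2}$ unordered pairs contributes at least one ordered incidence. Combined with $|M|=m$, this gives $|M|+\sum_{j=1}^m|M_j| \ge m+\binom{m}{2}=\binom{m+1}{2}$.

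Finally, I would check that $M, M_1, \ldots, M_m$ are $m+1$ distinct members of $\A$: clearly $M \neq M_j$ because $j \in M \setminus M_j$, while if $M_i = M_j$ for some $i\neq j$ then $i \notin M_j$ and $j \notin M_i$ would both hold, contradicting the separation conclusion of the previous paragraph. Hence
\[
\sum_{A \in \A}|A| \;\ge\; |M|+\sum_{j=1}^m|M_j| \;\ge\; \binom{m+1}{2},
\]
and dividing by $m$ and invoking \eqref{eq:double-counting} yields \eqref{falgasbound}. The only subtle step is the second one, identifying the right double-counting invariant; everything else is bookkeeping. The chain $\{\emptyset,\{1\},\{1,2\},\ldots,[m]\}$ shows the bound is tight and, reassuringly, makes all the above inequalities equalities.
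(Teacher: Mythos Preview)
Your proof is correct and follows the same overall strategy as the paper---exhibit a subfamily of distinct member-sets whose total size is at least $\binom{m+1}{2}$---but the execution differs in an interesting way. The paper first orders the elements by increasing frequency; this guarantees that for $i<j$ the separating set can always be chosen to avoid $i$ and contain $j$, and hence the constructed set $X_i$ satisfies $[i+1,m]\subseteq X_i$, giving $|X_i|\ge m-i$ directly. You instead work symmetrically with $M_i=U(\A_{\overline i})$ and replace the ordering trick by the double-counting over pairs, which is arguably cleaner and avoids the auxiliary labelling. What the paper's route buys in return is the extra structural fact that the most frequent element $m$ lies in every $X_i$, which immediately yields the corollary (Theorem~\ref{falgasbound2}) that any separating family with at most $2m$ sets satisfies the conjecture; your sets $M_i$ need not all contain a common element, so this corollary is not a free byproduct of your argument.
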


He remarks that this bound is stronger than Reimer's bound if $m > \sqrt{n \log_2 n}$.
The proof of~\eqref{falgasbound} is rather simple:

\begin{proof}
Assume that the elements $1 , 2 , \ldots , m$ of $U(\A)$ are labelled
in order of increasing frequency.
As $\A$ is separating, this ordering ensures
that for any $1 \le i < j \le m$ there is a set $X_{ij} \in \A$ 
such that $i \notin X_{ij}$ and $j \in X_{ij}$.
For all $1 \le i \le m-1$ let $X_i = \bigcup_{j= i+1}^m X_{ij}$, 
and put $X_0:=U(\A)$.
Observe that (a) the $X_i$ are all distinct and that (b) $[i+1,m] \subseteq X_i$.
Thus, the statement follows from
\[
\sum_{u\in U(\A)}|\A_u| \stackrel{(a)}{\ge} 
\sum_{i=0}^{m-1} |X_i| \stackrel{(b)}{\ge} \sum_{i=0}^{m-1} (m-i) = \frac{m(m+1)}{2}.
\]
\end{proof}

Let us point out an easy consequence of the proof.
As Nishimura and Takahashi observed, the union-closed sets conjecture 
holds for families that are very large with respect to their universe. 
Here we obtain the analogous result for very \emph{small} families: 

\begin{theorem}\label{falgasbound2}
Any separating family on $m$ elements with at most $2m$ member-sets
satisfies the union-closed sets conjecture.
\end{theorem}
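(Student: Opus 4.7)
The plan is to reuse verbatim the sets $X_0, X_1, \ldots, X_{m-1}\in\A$ constructed in the proof of Theorem~\ref{thm:Ravry}, and to observe that a single element -- namely the most frequent one -- is contained in all $m$ of them simultaneously.

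Concretely, I would first order the elements of $U(\A)$ in increasing order of frequency as $1, 2, \ldots, m$, and then, exactly as in the proof of Theorem~\ref{thm:Ravry}, pick for each pair $1\le i<j\le m$ a separating set $X_{ij}\in\A$ with $i\notin X_{ij}$ and $j\in X_{ij}$, and set $X_i := \bigcup_{j=i+1}^m X_{ij}$ for $1\le i\le m-1$, together with $X_0 := U(\A)$. Because $\A$ is union-closed and contains its universe, each $X_i$ is a member-set of $\A$, and the $X_i$ are pairwise distinct by observation (a) in the proof of Theorem~\ref{thm:Ravry}.

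The key additional remark, which goes beyond the original averaging argument, is that each of $X_0, X_1, \ldots, X_{m-1}$ contains the element $m$: $X_0=U(\A)$ of course does, and for $1\le i\le m-1$ the set $X_i$ contains $X_{im}$, which contains $m$ by construction. Thus the $m$ distinct sets $X_0, \ldots, X_{m-1}$ all lie in $\A_m$, so $|\A_m|\ge m$. Since $|\A|=n\le 2m$ by hypothesis, this immediately gives $|\A_m|\ge \tfrac{n}{2}$, so $m$ is an abundant element. There is no real obstacle here; the argument is essentially a re-reading of the Falgas-Ravry proof, with attention drawn to one specific element instead of to an averaged sum, and the only routine check -- that the $X_{ij}$ can actually be chosen in $\A$ -- was already handled by the separation hypothesis in that earlier proof.
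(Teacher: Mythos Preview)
Your proposal is correct and is essentially identical to the paper's own proof: the paper likewise observes that each of the $m$ distinct sets $X_0,\ldots,X_{m-1}$ constructed in the proof of Theorem~\ref{thm:Ravry} contains the most frequent element $m$, whence $|\A_m|\ge m\ge \tfrac{n}{2}$.
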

\begin{proof}
Each of the $m$ sets $X_i$ as constructed above 
contains the most 
frequent element $x_m$.
\end{proof}
We note that this is a weaker bound than the one obtained by Lo~Faro
for a minimal counterexample (Lemma~\ref{4q-1}): $n\leq 4m-1$. 
However, Lo~Faro's techniques
do not extend easily to small families and there is a good reason for this. 
If the factor in Theorem~\ref{falgasbound2} can be improved to $c>2$ then 
we may deduce that there is always an element whose frequency is 
a constant fraction of the number of member-sets. This natural 
weakening of the union-closed sets conjecture is still very much open.
\begin{theorem}[Hu~\cite{HuMaster}]\label{thm:Hu}
Suppose there is a $c>2$ so that any separating union-closed
family $\A'$ with $|\A'|\leq c|U(\A')|$ satisfies the union-closed 
sets conjecture. Then, for every union-closed family $\A$,
there is an element $u$ of frequency 
\[
|\A_u|\geq \frac{c-2}{2(c-1)}|\A|.
\]
\end{theorem}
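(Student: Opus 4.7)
The plan is to reduce to the regime of the hypothesis by passing to a suitable subfamily. Set $n=|\A|$, $m=|U(\A)|$, and $\alpha=\tfrac{c-2}{2(c-1)}$, so that $1-2\alpha=\tfrac{1}{c-1}$. Replacing $\A$ by its separating version (identifying elements of $U(\A)$ lying in the same member-sets) does not change any frequency, so we may assume $\A$ is separating. If $n\leq cm$ then the hypothesis applied directly to $\A$ yields an element of frequency $\geq n/2$, which already exceeds $\alpha n$ since $\alpha<\tfrac{1}{2}$.

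Suppose instead $n>cm$. For $u\in U(\A)$, consider $\A_{\overline u}$ and let $\tilde\A_u$ denote its separating version, so that $|\tilde\A_u|=|\A_{\overline u}|=n-|\A_u|$ and $|U(\tilde\A_u)|\leq m-1$. If $\tilde\A_u$ satisfies the hypothesis regime $|\tilde\A_u|\leq c\,|U(\tilde\A_u)|$, then the hypothesis yields an equivalence class $[v]$ with $|(\tilde\A_u)_{[v]}|\geq|\tilde\A_u|/2$. For any representative $v$ we lift this to
\[
|\A_v|\;\geq\;|(\A_{\overline u})_v|\;=\;|(\tilde\A_u)_{[v]}|\;\geq\;\frac{n-|\A_u|}{2}.
\]
This bound reaches $\alpha n$ precisely when $|\A_u|\leq (1-2\alpha)n=n/(c-1)$.

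The proof therefore reduces to exhibiting an element $u\in U(\A)$ with $|\A_u|\leq n/(c-1)$ such that $\tilde\A_u$ falls into the hypothesis regime. A natural candidate is an element of minimum frequency $f_{\min}$: by separation, no element $v\neq u$ can satisfy $\A_v\subseteq\A_u$, because $|\A_v|\leq f_{\min}$ would force equality and hence $\A_v=\A_u$, contradicting separation; consequently $|U(\A_{\overline u})|=m-1$ for this~$u$.

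The main obstacle is that the identification step producing $\tilde\A_u$ from $\A_{\overline u}$ may shrink the universe well below $m-1$, so the hypothesis condition $|\tilde\A_u|\leq c\,|U(\tilde\A_u)|$ need not hold automatically for the chosen $u$. The technical heart of the proof is therefore to show that, by a careful averaging argument trading off the smallness of $|\A_u|$ against the separation loss of $\A_{\overline u}$, at least one element $u\in U(\A)$ satisfies both requirements; the specific form $\alpha=\tfrac{c-2}{2(c-1)}$ emerges from the resulting balance.
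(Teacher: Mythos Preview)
Your proposal is incomplete: you correctly isolate the key difficulty---that passing from $\A_{\overline u}$ to its separating quotient $\tilde\A_u$ may collapse the universe far below $m-1$, so the regime condition $|\tilde\A_u|\le c\,|U(\tilde\A_u)|$ need not hold---but you do not resolve it. The ``careful averaging argument'' you invoke is the entire substance of the proof, and it is absent. There is a second, more basic gap in the single-step scheme: you need some $u$ with $|\A_u|\le n/(c-1)$. One may assume every frequency lies below $\alpha n$ (else done), but for $c>4$ one has $\alpha>1/(c-1)$, so this does not produce such a $u$; a family with all frequencies in the interval $\bigl(n/(c-1),\alpha n\bigr)$ would block the argument outright, and ruling that out is at least as hard as the problem you are trying to solve.

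The paper's route is entirely different and sidesteps both issues. Rather than \emph{shrinking} $\A$ into the regime $|\A'|\le c|U(\A')|$, it \emph{enlarges the universe}: pick an element, create $k$ clones of it (every member-set containing the element now also contains every clone), and add the $k$ sets of the form $U'-x$ to separate the clones. The resulting $\A'$ is union-closed and separating with $|\A'|=n+k$ and $|U(\A')|=m+k$, so taking $k\approx(n-cm)/(c-1)$ forces $|\A'|\le c|U(\A')|$. An abundant element of $\A'$ has frequency at least $(n+k)/2$ there, hence at least $(n+k)/2-k=(n-k)/2\ge\alpha n$ back in $\A$. The reason this works cleanly while your approach stalls is that cloning raises $|\A'|$ and $|U(\A')|$ by the \emph{same} amount, driving their ratio monotonically down toward~$1$; deleting an element, by contrast, gives no control over how far the separating universe collapses.
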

The theorem is proved along the following lines: 
by cloning some element, the universe $U$
of $\A$ is enlarged to $U'$. At the same 
time, we add sets of the form $U'-x$ in order to 
separate the clones from each other. The resulting family $\A'$ 
is then separating and will be made to have size $|\A'|\leq c|U'|$. 
Now an element of frequency $\geq\tfrac{1}{2}|\A'|$ will still 
have high frequency in $\A$.

\medskip
Falgas-Ravry also gives a family of separating \ucfs~which shows that the 
combination of the
bounds~\eqref{eq:Reimer} and~\eqref{falgasbound} is close to optimal, in the sense that the sum of both bounds can serve as an upper bound on the minimum possible weight of a separable \ucf.
For this, he calls a pair $(m,n)$ \emph{satisfiable} if there is a separating \ucf~with $n$ sets on a universe of $m$ elements.

\begin{theorem}[Falgas-Ravry~\cite{FR11} and Reimer~\cite{Rei03}]\label{upperlowerboundavg}
Let $(m,n)$ be a satisfiable pair of integers.
Let $\A$ be a \ucf~on $m$ elements and $n$ sets of minimal average frequency.
Then
\begin{equation}
\max \left( \frac{n \log_2 n}{2m} , \frac{m+1}{2} \right) \le \frac{1}{m} \cdot \sum_{u\in U(\A)}|\A_u| \le \frac{n \log_2 n}{2m} + \frac{m+1}{2} + \frac{n}{m}.
\end{equation}
\end{theorem}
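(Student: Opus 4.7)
The plan is to establish the two bounds separately. For the lower bound, both halves of the maximum are inherited from earlier results. Reimer's Theorem~\ref{Reimertheorem} gives $\frac{1}{n}\sum_{A\in\A}|A|\ge\frac{\log_2 n}{2}$; combining with the double-counting identity $\sum_{A\in\A}|A|=\sum_{u\in U(\A)}|\A_u|$ and dividing by $m$ yields $\frac{1}{m}\sum_{u\in U(\A)}|\A_u|\ge\frac{n\log_2 n}{2m}$. Theorem~\ref{thm:Ravry} gives the other half $\frac{1}{m}\sum_u|\A_u|\ge\frac{m+1}{2}$ directly. Both hold simultaneously for every separating \ucf, so their maximum is a lower bound on the average frequency too.

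For the upper bound, I would exhibit, for each satisfiable pair $(m,n)$, an explicit separating \ucf~witnessing the inequality. The natural candidate is a hybrid of the two component-extremal examples: the power set of $[k]$, which is tight for Reimer's bound, fused with the chain of initial segments $\{[1,j]:j\in[k+1,m]\}$, which is tight for Falgas-Ravry's bound. Concretely, pick $k$ so that $2^k+m-k$ is close to $n$ and set
\[
\A_0 \;=\; 2^{[k]} \,\cup\, \{[1,j]:j\in[k+1,m]\}.
\]
This family has $2^k+(m-k)$ members. Union-closedness follows because unions inside $2^{[k]}$ or inside the chain stay in the respective piece, and any mixed union collapses into the chain since $S\cup[1,j]=[1,j]$ whenever $S\subseteq[k]\subseteq[1,j]$. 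Separation holds because $2^{[k]}$ separates elements of $[k]$, the chain $[1,j]$'s separate elements of $[k+1,m]$, and any singleton $\{i\}\in 2^{[k]}$ separates an element of $[k]$ from one of $[k+1,m]$.

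A direct count gives $|(\A_0)_i|=2^{k-1}+(m-k)$ for $i\in[k]$ and $|(\A_0)_j|=m+1-j$ for $j\in[k+1,m]$, so
\[
\frac{1}{m}\sum_{u}|(\A_0)_u| \;=\; \frac{k\cdot 2^{k-1}+k(m-k)+\binom{m-k+1}{2}}{m}.
\]
Using $\log_2 n\ge k$ (as $n\ge 2^k$), one checks that this quantity is at most $\frac{n\log_2 n}{2m}+\frac{m+1}{2}$, comfortably inside the claimed upper bound. For values of $n$ not of the form $2^k+(m-k)$, the family must be padded by a handful of extra sets of the shape $[1,j]\cup T$ with $T\subseteq[k]$; each such set is a union of existing members, so union-closedness is preserved, and the contribution to $\sum_u|\A_u|$ stays below $n$, absorbed by the $\frac{n}{m}$ slack term.

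The main obstacle is precisely this padding step. For a generic satisfiable $(m,n)$ one must interpolate between the exact values $n=2^k+(m-k)$ so that the enlarged family has exactly $n$ members, remains union-closed and separating, and does not overshoot the $\frac{n}{m}$ slack. A careful, lexicographic selection (by $(j,|T|,T)$) of which sets $[1,j]\cup T$ to include keeps the bookkeeping manageable, but this delicate counting is where the technical weight of the proof sits; everything else is bookkeeping on top of the clean decomposition into Reimer's and Falgas-Ravry's extremal families.
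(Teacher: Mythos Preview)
Your lower bound and the core construction for the upper bound match the paper's account: the lower bound is simply the combination of Reimer's theorem with Falgas-Ravry's theorem, and the paper notes that Falgas-Ravry's construction for the upper bound is ``not unlike that of Duffus and Sands'', i.e.\ a power set with a chain stacked on top---precisely your $\A_0 = 2^{[k]} \cup \{[1,j]:k<j\le m\}$.

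There is, however, a concrete slip in the padding step. For $j\ge k+1$ the interval $[1,j]$ already contains $[k]$, so for any $T\subseteq[k]$ you have $[1,j]\cup T=[1,j]$. Thus the ``extra sets of the shape $[1,j]\cup T$'' are not extra at all: they are already members of $\A_0$ and cannot raise the count to a general $n$. (You even observe that ``each such set is a union of existing members''---but $\A_0$ is union-closed, so any union of existing members is itself an existing member.) The interpolation therefore has to be done differently, for instance by growing the power-set block rather than the chain: adjoin sets $S\cup\{k+1\}$ for $S$ ranging over an up-set of $2^{[k]}$ not containing $[k]$. This keeps the family union-closed and separating, each added set has size at most $k+1\le\log_2 n+1$, and the resulting contribution can be absorbed into the $\frac{n\log_2 n}{2m}+\frac{n}{m}$ slack. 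With the padding corrected along these lines, the rest of your outline goes through.
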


To establish the upper bound in Theorem~\ref{upperlowerboundavg}, Falgas-Ravry uses  
a construction not unlike that of Duffus and Sands~\cite{DS99}
that we discuss below.

\subsection{Limits of averaging}\label{seclimitsofaveraging}

In the framework of the lattice formulation, 
Cz\'edli, Mar\'oti and Schmidt~\cite{CMS09} construct for every size $m$ of the universe
a family of $\lfloor \tfrac{2}{3}2^{m}\rfloor$ members, for which averaging fails. 
We present here a lattice-free version of their family and a short and elementary proof
that the average is always too small.

On the set $\mathbb N^{<\omega}$ of finite subsets of 
the positive integers, let $\hord$ be the order defined 
by first sorting by increasing largest element and then 
by reverse colex order. 
In other words,  we set
$A\hord B$ if 
\begin{itemize}
\item $\max A < \max B$; or
\item $\max A = \max B$ but $\max (A\Delta B)\in A$
\end{itemize}
for finite $A,B\subseteq\mathbb N$.

As an illustration, here is the initial segment of the order, where we write
$124$ for the set $\{1,2,4\}$:
\begin{align*}
\emptyset < 1 < 12 < 2 < 123 < 23 < 13 < 3 < 1234 < 234 &\\
< 134 < 34 < 124 < 24 < 14 < 4 < 12345 < ...
\end{align*}

For any positive integer $n$, define 
the \emph{Hungarian family $\hfam$}
to be the inital segment of length
$n$
of $\mathbb N^{<\omega}$
under $\hord$. It is easy to check that $\hfam$ is union-closed and that its
universe is $[\lceil \log_2 n \rceil]$.

\begin{theorem}[Cz\'edli, Mar\'oti and Schmidt]\label{hunthm}
For the Hungarian family on $[m]$ of size $n=\lfloor \tfrac{2}{3}2^{m}\rfloor$
\[
\frac{1}{m} \cdot \sum_{i\in [m]}|\hfam_i|<\frac{|\hfam|}{2}.
\]
for any $m>1$.
\end{theorem}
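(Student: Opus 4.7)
The plan is to split $\hfam$ into the $2^{m-1}$ sets not containing $m$ and the remaining $r := n - 2^{m-1}$ sets that do contain $m$, compute $\sum_i |\hfam_i|$ directly, and then finish by induction on $m$. Since $n = \lfloor 2^{m+1}/3\rfloor$, one has $r = \lfloor 2^{m-1}/3\rfloor$, and in particular $r < 2^{m-2}$. The first $2^{m-1}$ sets in $\hord$ are exactly all subsets of $[m-1]$, while the next $r$ sets form an initial segment of length $r$ in the reverse colex order on the subsets of $[m]$ having maximum $m$. Writing these latter sets as $([m-1]\setminus C_j)\cup\{m\}$ for $j = 0,\ldots,r-1$, where $C_0,C_1,\ldots$ is the colex enumeration of the subsets of $[m-1]$, and setting $T := \sum_{j=0}^{r-1}|C_j|$, the double-counting identity $\sum_i|\hfam_i| = \sum_A |A|$ yields
\[
\sum_{i\in[m]}|\hfam_i| \;=\; (m-1)2^{m-2} + rm - T.
\]
The theorem's inequality then rearranges to the compact assertion $T > rm/2 - 2^{m-2}$, which is what I would actually prove.

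To establish this I would induct on $m$, handling $m\in\{2,3,4\}$ as base cases (for $m=2$ the claim is immediate since element $2$ has frequency $0$ in $\hfam$; the other two are direct computations). Write $r_m := \lfloor 2^{m-1}/3\rfloor$ and let $T_m$ denote the corresponding $T$. The key ingredient is the recurrence $r_{m+2} = r_m + 2^{m-1}$, which follows from $2^{m+1}/3 - 2^{m-1}/3 = 2^{m-1}$ together with a brief check that the two floors align for each residue of $2^{m-1}\pmod 3$. Since the first $2^{m-1}$ subsets of $\mathbb N$ in colex order are exactly the subsets of $[m-1]$, and the subsequent $r_m$ subsets are obtained by adjoining the element $m$ to the first $r_m$ of them, one obtains
\[
T_{m+2} \;=\; (m-1)2^{m-2} + r_m + T_m.
\]
Substituting this into the discrepancy $\Delta_m := T_m - r_m m/2 + 2^{m-2}$ and using $2^m = 4\cdot 2^{m-2}$, the various terms collapse to $\Delta_{m+2} = \Delta_m$. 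The base cases compute to $\Delta_3 = 1/2$ and $\Delta_4 = 1$, both strictly positive, so $\Delta_m > 0$ for all $m\geq 2$, which is exactly the desired inequality.

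The main subtlety lies in the tightness of the bound: $\Delta_m$ alternates between $1/2$ (odd $m$) and $1$ (even $m$), so any arithmetic slip in the inductive step would be fatal. In particular one has to verify carefully that the integer identity $r_{m+2} = r_m + 2^{m-1}$ really holds (the floors might conceivably misalign, but they do not), and that $r_{m+2} \leq 2^m$ so that the first $r_{m+2}$ colex subsets of $\mathbb N$ fit inside $[m]$ and the recursion for $T_{m+2}$ is valid. Beyond this bookkeeping, the argument is otherwise entirely elementary and self-contained.
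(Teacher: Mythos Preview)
Your argument is correct. The decomposition into the $2^{m-1}$ subsets of $[m-1]$ and the remaining $r=\lfloor 2^{m-1}/3\rfloor$ sets containing $m$, the complementation trick expressing each of the latter as $([m-1]\setminus C_j)\cup\{m\}$ with $C_j$ running through a colex initial segment, and the recursion $T_{m+2}=(m-1)2^{m-2}+r_m+T_m$ together with $r_{m+2}=r_m+2^{m-1}$ all check out, and the algebra collapsing $\Delta_{m+2}$ to $\Delta_m$ is clean. Your values $\Delta_3=\tfrac12$ and $\Delta_4=1$ in fact recover the exact totals $\sum_i|\hfam_i|=\tfrac{mn}{2}-\tfrac12$ (odd $m$) and $\tfrac{mn}{2}-1$ (even $m$), matching the paper.

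The route, however, is genuinely different from the paper's. The paper refines your two-block split into a telescoping decomposition $\hfam=I_0\cup I_1\cup\cdots\cup I_k$ with $|I_i|=2^{m-(2i+1)}$, describes each $I_i$ explicitly, and then computes the individual frequencies $|\hfam_j|$ element by element (separately for $j$ of the form $m-(2i-1)$ and $m-2i$), summing them directly to $\tfrac{m}{2}|\hfam|-1$ without induction. Your approach trades this finer structural description for a two-step induction on $m$ via the colex recursion for $T_m$. The paper's method yields more: one sees exactly which elements are frequent and which are rare. Your method is arguably slicker for the bare inequality, since it bypasses the element-by-element bookkeeping and reduces everything to the single invariant $\Delta_m$.
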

\begin{proof}
The key to the proof are the simple and well-known identities
\begin{align}
\lfloor \tfrac{2}{3}2^m\rfloor &= \frac{2^{m+1}-1}{3} = 2^{m-1}+2^{m-3}+\ldots+4+1\text{ if $m$ odd.}\\
\label{twothirds}
\lfloor \tfrac{2}{3}2^m\rfloor &= \frac{2^{m+1}-2}{3} = 2^{m-1}+2^{m-3}+\ldots+8+2\text{ if $m$ even.}
\end{align}

Put $k=\lfloor\tfrac{m-1}{2}\rfloor$.
Denote by $I_0$ the initial segment of $\mathbb N^{<\omega}$ of length~$2^{m-1}$, 
by $I_1$ the set of the next $2^{m-3}$ sets in the order, 
by $I_2$ the following $2^{m-5}$ sets and so on until we 
reach $I_{k}$.

Clearly, $|I_i|=2^{m-(2i+1)}$ and 
\(
\hfam = I_0\cup I_1\cup \ldots\cup I_{k}.
\)
Moreover, we can see that $I_0=2^{[m-1]}$ and that for $i\geq 1$, the set
$I_i$ is the set of all $X\subseteq [m]$ 
that contain all of $m-1,m-3,\ldots, m-(2i-1)$ and of $m,m-2i$, but
none of $m-2,m-4,\ldots,m-(2i-2)$.

Thus, an element $m-(2i-1)$ appears in half of the members 
of $I_0\cup\ldots\cup I_{i-1}$ and in all of 
the sets in $I_i\cup\ldots\cup I_{k}$.
Its frequency is therefore
\begin{equation}\label{oddfreq}
|\hfam_{m-(2i-1)}|=\tfrac{1}{2}\left(|I_0|+\ldots+|I_{i-1}|\right)
+|I_i|+\ldots+|I_{k}|.
\end{equation}

An element $m-2i$ is contained in half of the sets of $I_0\cup \ldots\cup I_{i-1}$,
in all of the sets in $I_i$ but in none of $I_{i+1}\cup \ldots\cup I_{k}$.
Its frequency is 
\begin{equation}\label{evenfreq}
|\hfam_{m-2i}|=\tfrac{1}{2}\left(|I_0|+\ldots+|I_{i-1}|\right)+|I_i|.
\end{equation}
Moreover, we observe that $m$ lies in all of sets of $\hfam$ but those in $I_0$. 

For the final argument, we assume $m$ to be even, that is $m=2k+2$. The case
of odd $m$ is very similar. With~\eqref{oddfreq} and~\eqref{evenfreq},
we obtain
\begin{align*}
\sum_{j=1}^m|\hfam_j|&=|\hfam_m|+\sum_{i=1}^{k}\left(|\hfam_{m-(2i-1)}|+|\hfam_{m-2i}|\right)+|\hfam_1|\\
&=|\hfam|-|I_0|+\sum_{i=1}^{k}\left(|\hfam|+|I_i|\right)+\frac{1}{2}|\hfam|\\
&=(k+1)|\hfam|-2|I_0|+\frac{3}{2}|\hfam|\\
&=\frac{m}{2}|\hfam|-2^m+\frac{3}{2}\cdot\frac{2^{m+1}-2}{3}
=\frac{m}{2}|\hfam|-1,
\end{align*}
where we used~\eqref{twothirds} in the penultimate step.
\end{proof}

\bigskip

So, the averaging method can never yield the union-closed sets
conjecture in its full generality. Might it perhaps be possible 
to at least obtain the natural relaxation, in which we only 
ask for an element that appears in $\geq 1\%$ of the member-sets?
As Duffus and Sands~\cite{DS99} observed, not even this 
more modest aim may be attained just by averaging.
We present here their construction.

Let $V$ be a set of size $2t$, and $W=\{w_1,\ldots,w_{2^{t}}\}$ be a 
disjoint set of $2^t$ elements. Put
\[
\A=2^V\cup\{V\cup\{w_1,\ldots,w_i\}:i=1\ldots,2^t\}.
\]	
Then $\A$ is a (separating) union-closed family of size $|\A|=2^{2t}+2^t$
on a universe $U=V\cup W$
of size $2t+2^t$. Averaging yields
\begin{align*}
\frac{1}{|U|} \cdot \sum_{u\in U}\frac{|\A_u|}{|\A|}
&=\frac{2t(2^{2t-1}+2^t)+\sum_{i=1}^{2^t}(2^t-i+1)}{(2t+2^t)(2^{2t}+2^t)}\\
&=\frac{2t(2^{2t-1}+2^t)+2^{t-1}(2^t-1)}{(2t+2^t)(2^{2t}+2^t)}\to 0\text{ as }t\to\infty,
\end{align*}
as the largest summand in the numerator is $t2^{2t}$, while the largest
one in the denominator is $2^{3t}$. This shows that an averaging argument cannot
always guarantee an element of frequency at least $c|\A|$ for any $c>0$.

\subsection{Up-compression}\label{secupcompression}

We now outline Reimer's proof of Theorem~\ref{Reimertheorem} because it uses 
a common technique in extremal combinatorics: shifting or compression.
We first restate the theorem.

\newtheorem*{reimerthm}{Theorem~\ref{Reimertheorem}}
\begin{reimerthm}[Reimer~\cite{Rei03}]
Let $\A$ be a \ucf~on $n$ sets. Then
\[
\frac{1}{n} \cdot \sum_{A \in \A} |A| \ge \frac{\log_2 n}{2}.
\]
\end{reimerthm}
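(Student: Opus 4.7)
The plan is to apply up-compression (shifting) repeatedly until the family becomes an up-set in the Boolean lattice $2^{U(\A)}$, and then to prove the bound for up-sets by induction on the universe size.

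For each $x \in U(\A)$ I define the compression $C_x(\A)$ by replacing each $A \in \A$ with $A + x$ whenever $x \notin A$ and $A + x \notin \A$, and leaving $A$ unchanged otherwise. Three properties then need to be verified: the replacement map is injective, so $|C_x(\A)| = |\A|$; each set either stays the same or grows by one, so $\sum_A |A|$ is weakly increasing; and $C_x(\A)$ is again union-closed. The last property requires a case analysis on pairs $A', B' \in C_x(\A)$ according to whether each came from an unmoved or a compressed set, and in every case one checks $A' \cup B' \in C_x(\A)$ using union-closedness of the original $\A$. I expect this preservation of union-closedness to be the most delicate step of the plan, though the cases are mechanical.

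Iterating $C_x$ over all $x \in U(\A)$ must terminate, since each application that actually changes the family strictly increases the bounded quantity $\sum_A |A|$. Writing $\A'$ for the stable family obtained at the end, we have $|\A'| = |\A|$ and $\sum_{A \in \A'} |A| \geq \sum_{A \in \A} |A|$; moreover stability means that for every $A \in \A'$ and every $x \in U(\A') \setminus A$ one has $A + x \in \A'$, so that $\A'$ is an up-set in $2^{U(\A')}$ (and, as such, automatically union-closed). Hence it suffices to prove the inequality for up-sets.

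For an up-set $\A$ on $[m]$ I proceed by induction on $m$. Splitting $\A = \A_{\bar m} \cup \A_m$ according to whether the set contains $m$, both $\A_{\bar m}$ and $\{A - m : A \in \A_m\}$ are up-sets on $[m-1]$, and the embedding $A \mapsto A + m$ from $\A_{\bar m}$ into $\A_m$ shows $a := |\A_{\bar m}| \leq |\A_m| =: b$. The induction hypothesis then yields
\[
\sum_{A \in \A} |A| \;\geq\; \tfrac{1}{2} a \log_2 a \;+\; b \;+\; \tfrac{1}{2} b \log_2 b,
\]
so the argument reduces to the scalar inequality
\[
a \log_2 a + b \log_2 b + 2b \;\geq\; (a+b) \log_2(a+b) \qquad (0 \leq a \leq b),
\]
which I would verify by noting that the difference vanishes at $a = b$ and is monotonically decreasing in $a$ for each fixed $b$.
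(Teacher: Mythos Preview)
Your reduction step is backwards. You correctly observe that up-compression can only \emph{increase} the total set size, i.e.\ $\sum_{A\in\A'}|A|\geq\sum_{A\in\A}|A|$. But the inequality you are trying to prove is a \emph{lower} bound on $\sum_{A\in\A}|A|$; knowing that the larger quantity $\sum_{A\in\A'}|A|$ exceeds $\tfrac{n}{2}\log_2 n$ tells you nothing about the smaller one. So the sentence ``Hence it suffices to prove the inequality for up-sets'' does not follow. Concretely, your argument would equally well ``prove'' that every $n$-set family with $U(\A)\in\A$ satisfies Reimer's bound, since you never used union-closedness after the compression step; but a family such as $\{\emptyset,\{1\},\ldots,\{m-1\},[m]\}$ has $n=m+1$ sets and total size $2m-1$, which is far below $\tfrac{n}{2}\log_2 n$ for large $m$.

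Your induction on up-sets is fine, and in fact for up-sets the bound does hold (and is tight on power sets, so there is no slack to exploit). What Reimer's proof in the paper does differently is that it does \emph{not} simply pass to the up-set $u(\A)$; instead it keeps track of the growth $\sum_{A}|u(A)\setminus A|$ and proves two bounds on it, one via the edge boundary $|\mathrm{EB}(u(\A))|$ and one via $n(m-\log_2 n)$. Combined with the exact formula $2\sum_{A}|u(A)|=mn+|\mathrm{EB}(u(\A))|$ for up-sets, the edge-boundary terms cancel and the desired inequality for the original family drops out. The crucial content you are missing is precisely this control on how much compression inflates the total size, and that is where union-closedness of $\A$ is genuinely used.
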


Compression subjects the given initial object (the union-closed
family), to small incremental changes until a simpler object is reached
(an up-set), while maintaining the essential properties of the 
initial object. Variants of compression have been used by Frankl in order
to prove the Kruskal-Katona theorem~\cite{Fra84} and in the context of 
traces of finite sets~\cite{Fra83}. 
The technique is also used by Alon~\cite{Alo83} and various others;
see Kalai's blog post~\cite{Kal08} for an enlightening discussion.

Returning to Reimer's proof we define the \emph{up-compression}
of a union-closed family $\mathcal A$. For this, consider an element $i$,
and define 
\[
u_i(A)=\begin{cases}
A+i&\text{ if $A+i\notin\mathcal A$}\\
A&\text{ otherwise},
\end{cases}
\]
for every $A\in\mathcal A$.
Then it turns out that the up-compressed family
 $u_i(\mathcal A):=\{u_i(A):A\in\mathcal A\}$ is still union-closed.
Moreover, iteratively applying up-compression for every element $i$ in the 
universe of $\mathcal A$ results in an \emph{up-set}: a family
$\mathcal U$ on universe $U$ for which $X \in U$ and 
$X \subseteq Y \subseteq U$ implies 
$Y\in\mathcal U$. 
We may always assume $\A$ to have universe $[m]$. 
We then write $u(\mathcal A)$ for the iterated 
up-compression $u_m\circ\ldots\circ u_1(\mathcal A)$. 

\begin{lemma}[Reimer~\cite{Rei03}]
Let $\mathcal A$ be a union-closed family on universe $U$. Then
\begin{enumerate}[\rm (i)]
\item $u_i(\mathcal A)$ is union-closed for any $i\in U$; and
\item $u(\mathcal A)$ is an up-set.
\end{enumerate}
\end{lemma}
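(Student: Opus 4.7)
The plan is to tackle (i) by a direct case analysis and (ii) by identifying the right inductive invariant.

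For (i), fix $C,D\in u_i(\mathcal A)$ and pick $A,B\in\mathcal A$ with $u_i(A)=C$ and $u_i(B)=D$. Since $\mathcal A$ is union-closed, $A\cup B\in\mathcal A$, and I plan to exhibit a pre-image of $C\cup D$ under $u_i$. A quick inspection shows that $C\cup D$ is either $A\cup B$ or $(A\cup B)+i$. In the former case, the definition of $u_i$ together with the union-closedness of $\mathcal A$ force $(A\cup B)+i\in\mathcal A$ (trivially when $i\in A\cup B$; otherwise because $A+i, B+i\in\mathcal A$ and $(A+i)\cup(B+i)=(A\cup B)+i$), so $u_i(A\cup B)=A\cup B=C\cup D$. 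In the latter case, either $u_i(A\cup B)=(A\cup B)+i$ directly, or $(A\cup B)+i$ is already a member of $\mathcal A$ and is then fixed by $u_i$. Either way, $C\cup D\in u_i(\mathcal A)$.

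For (ii), the natural invariant to track is: a family $\mathcal F$ on $U$ is \emph{$i$-up-closed} if $X\in\mathcal F$ and $i\notin X$ imply $X+i\in\mathcal F$. A family is an up-set precisely when it is $i$-up-closed for every $i\in U$. I would then prove:
\begin{enumerate}[\rm (a)]
\item $u_i(\mathcal F)$ is $i$-up-closed for any family $\mathcal F$;
\item if $\mathcal F$ is $i$-up-closed, then so is $u_j(\mathcal F)$ for every $j\neq i$.
\end{enumerate}
Fact (a) is immediate: if $X=u_i(A)\in u_i(\mathcal F)$ with $i\notin X$, then $u_i(A)\neq A+i$ forces $u_i(A)=A=X$ and $A+i\in\mathcal F$; since $(A+i)+i=A+i\in\mathcal F$, we get $u_i(A+i)=A+i=X+i$. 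For (b), write $X=u_j(A)$ and use $i$-up-closedness of $\mathcal F$ to produce $A+i\in\mathcal F$; a short case distinction on whether $X=A$ or $X=A+j$, and on whether $A+i+j\in\mathcal F$, identifies a pre-image of $X+i$ under $u_j$ (either $A+i$ itself or $A+i+j$). Applying (a) once and then (b) repeatedly along $u_m\circ\cdots\circ u_1$ shows that $u(\mathcal A)$ is $i$-up-closed for every $i$, hence an up-set.

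The main obstacle will be the case analysis in (b): the image $u_j(A+i)$ depends on whether $A+i+j$ lies in $\mathcal F$, so one must carefully verify that the correct pre-image exists in each sub-case. The saving feature is that $i$-up-closedness always delivers $A+i\in\mathcal F$ for free, and from this one of the two natural candidates always has $u_j$-image equal to the required $X+i$. Once this invariant is set up, both (i) and (ii) reduce to routine bookkeeping.
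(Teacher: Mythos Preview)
Your argument is correct. The paper states this lemma without proof (it is attributed to Reimer), so there is nothing to compare against; your case analysis for (i) and the invariant ``$i$-up-closed'' for (ii) are exactly the standard way to verify these facts, and each sub-case checks out as you describe.
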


\newcommand{\eb}{EB}

What have we gained? The key to the averaging technique is to 
control the total set size $\sum_{A\in\mathcal A} |A|$. For an 
up-set the total set size can be given in a closed form.
Define the \emph{edge boundary} of an up-set $\mathcal U$
on a universe $U$ to 
be 
\[
\eb(\mathcal U)=\{(A,A+i):A\notin\mathcal U,\,i\in U \text{ and } A+i\in\mathcal U\}.
\]
Now
\begin{lemma}[Reimer~\cite{Rei03}]
Let $\mathcal U$ be an up-set on $m$ elements. Then
\[
2\sum_{A\in\mathcal U}|A|=m|\mathcal U|+|\eb(\mathcal U)|.
\]
\end{lemma}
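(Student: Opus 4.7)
The plan is to establish the identity by a double count of the set $P=\{(A,i):A\in\mathcal U,\,i\in[m]\}$, whose cardinality is trivially $m|\mathcal U|$. I would split $P$ into the two classes $P_{\in}=\{(A,i)\in P:i\in A\}$ and $P_{\notin}=\{(A,i)\in P:i\notin A\}$, of respective sizes $\sum_{A\in\mathcal U}|A|$ and $m|\mathcal U|-\sum_{A\in\mathcal U}|A|$.

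Both classes I would then interpret as enumerations of covering edges in the Boolean lattice $2^{[m]}$, viewed from a distinguished endpoint. A pair $(A,i)\in P_{\notin}$ encodes the edge $\{A,A+i\}$ as seen from its lower endpoint $A\in\mathcal U$; since $\mathcal U$ is an up-set, we also have $A+i\in\mathcal U$, so this edge is \emph{interior} (both endpoints in $\mathcal U$), and conversely every interior edge is enumerated in this way exactly once by taking its lower endpoint. A pair $(A,i)\in P_{\in}$ encodes the edge $\{A-i,A\}$ as seen from its upper endpoint $A\in\mathcal U$; this edge is either interior (if $A-i\in\mathcal U$) or contributes exactly one element of $\eb(\mathcal U)$ (if $A-i\notin\mathcal U$). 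Hence $P_{\in}$ enumerates every interior edge and every boundary edge exactly once.

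Writing $E$ for the number of interior edges, these observations yield the two identities
\[
m|\mathcal U|-\sum_{A\in\mathcal U}|A|=E\quad\text{and}\quad\sum_{A\in\mathcal U}|A|=E+|\eb(\mathcal U)|.
\]
Subtracting the first from the second eliminates $E$ and produces the desired equality $2\sum_{A\in\mathcal U}|A|=m|\mathcal U|+|\eb(\mathcal U)|$. I do not foresee any real obstacle: the only place the up-set hypothesis enters is the guarantee that pairs in $P_{\notin}$ never straddle the boundary, which is precisely what makes the double count close up cleanly.
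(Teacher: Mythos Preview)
Your proof is correct; the double count of interior versus boundary covering edges in the Boolean cube is exactly the natural argument here, and the up-set hypothesis is used precisely where you say. The paper itself states this lemma without proof (attributing it to Reimer), so there is no in-paper argument to compare against.
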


In order to finish Reimer's proof we need to see that the second essential 
part of the compression argument holds: that the object does not 
change too much during compression. Here this means that the 
total set size has  controlled growth.
 
\begin{lemma}[Reimer~\cite{Rei03}]
Let $\mathcal A$ be union-closed family. Then 
\begin{enumerate}[\rm (i)]
\item $\sum_{A\in\mathcal A}|u(A)-A|\leq |\eb(u(\mathcal A))|$; and
\item $\sum_{A\in\mathcal A}|u(A)-A|\leq |\mathcal A|(m-\log_2(|\mathcal A|))$.
\end{enumerate}
\end{lemma}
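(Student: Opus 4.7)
The plan is to exploit two simple identities and one structural fact uniformly across both parts. First, each $u_j$ is injective on the current family, so the iteration $u$ is a bijection from $\mathcal A$ onto $u(\mathcal A)$ with $A\subseteq u(A)$; hence
\[
\sum_{A\in\mathcal A}|u(A)-A|\;=\;\sum_{B\in u(\mathcal A)}|B|\;-\;\sum_{A\in\mathcal A}|A|.
\]
Second, since $u_j$ never changes $i$-membership when $j\ne i$, the total decomposes coordinate-wise as $\sum_A|u(A)-A|=\sum_i\Delta_i$, where $\Delta_i:=|\{A\in\mathcal A:i\notin A,\ i\in u(A)\}|$ is exactly the number of sets altered by the stage-$i$ compression.

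For part (i) I would argue one coordinate at a time. Writing $\mathcal A^{(i-1)}$ for the family obtained after applying $u_1,\dots,u_{i-1}$, the preservation of $i$-membership by $u_j$ with $j\ne i$ gives $|u(\mathcal A)_i|=|\mathcal A_i|+\Delta_i$ and $|u(\mathcal A)_{\bar i}|=|\mathcal A_{\bar i}|-\Delta_i$. Since $u(\mathcal A)$ is an up-set, its directional edge boundary equals $|u(\mathcal A)_i|-|u(\mathcal A)_{\bar i}|$, and summing yields
\[
|\eb(u(\mathcal A))|\;=\;\sum_i\bigl(|\mathcal A_i|-|\mathcal A_{\bar i}|\bigr)\;+\;2\sum_i\Delta_i.
\]
The target $\sum_i\Delta_i\le|\eb(u(\mathcal A))|$ now rearranges into the per-direction bound $\Delta_i\ge|\mathcal A_{\bar i}|-|\mathcal A_i|$, which I would establish as follows: the sets \emph{unchanged} by $u_i$ are precisely those $B\in\mathcal A^{(i-1)}_{\bar i}$ with $B+i\in\mathcal A^{(i-1)}$, and the map $B\mapsto B+i$ injects them into $\mathcal A^{(i-1)}_i=\mathcal A_i$, so the ``blocked'' count at stage $i$ is at most $|\mathcal A_i|$.

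For part (ii) the target is $\sum_A|u(A)-A|\le n(m-\log_2 n)$ with $n=|\mathcal A|$, and this does \emph{not} follow from (i) by bounding $|\eb(u(\mathcal A))|$: the family $\mathcal A=\{\emptyset,\{1\},\{1,2\}\}$ already has $|\eb(u(\mathcal A))|>n(m-\log_2 n)$ even though the addition total stays below the bound. The structural input I would lean on is that every $B\in u(\mathcal A)$ satisfies $|B|\ge m-\log_2 n$, because the $2^{m-|B|}$ supersets of $B$ all lie in the up-set and hence cannot exceed $n$ in number. Via the first identity above, (ii) rewrites as $\sum_B\bigl(|B|-(m-\log_2 n)\bigr)\le\sum_A|A|$, i.e.\ the total excess of the up-set elements above the forced minimum size is controlled by the total size of the original family.

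The hard part will be precisely this amortization step, since the pointwise bound $|u(A)-A|\le m-\log_2 n$ is false in general. My strategy would be to combine $\sum_A|u(A)-A|=\sum_i\Delta_i$ with a stage-by-stage telescoping that uses the union-closedness of every intermediate $\mathcal A^{(i-1)}$ to inject each addition event $(A,i)$ into a distinct slot of some $B\in u(\mathcal A)$, with each $B$ receiving at most $m-\log_2 n$ slots. Constructing such an injection compatibly with the compression dynamics and with the uniform lower bound on $|B|$ is, to my eye, the crux of the argument.
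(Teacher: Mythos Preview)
The paper does not prove this lemma; it states it with attribution to Reimer and then uses it as a black box in the proof of Reimer's theorem. So there is no in-paper proof to compare against, and I assess your proposal against Reimer's original argument.

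Your treatment of part~(i) is complete and correct, and it is genuinely different from Reimer's route. Reimer's key structural fact is that the Boolean intervals $[A,u(A)]$ for $A\in\mathcal A$ are pairwise disjoint in $2^{[m]}$. From this, (i) follows by noting that for each $i\in u(A)\setminus A$ the set $u(A)-i$ lies in $[A,u(A)]$ but cannot coincide with any $u(A')$, so $(u(A)-i,u(A))\in\eb(u(\mathcal A))$, and all these boundary edges are distinct. Your coordinate-wise counting via $|\eb(u(\mathcal A))|=\sum_i(|\mathcal A_i|-|\mathcal A_{\overline i}|+2\Delta_i)$ together with the blocked-count bound $\Delta_i\ge|\mathcal A_{\overline i}|-|\mathcal A_i|$ sidesteps the interval lemma entirely and is a clean self-contained alternative for~(i) taken on its own.

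Part~(ii), however, has a genuine gap. You have correctly reformulated the inequality and recorded the lower bound $|B|\ge m-\log_2 n$ for every $B\in u(\mathcal A)$, but the slot-injection you propose is left unconstructed, and I do not see how to build it from the pieces you have assembled; the coordinate-wise accounting that carried~(i) gives no purchase here. The idea you are missing is precisely Reimer's interval disjointness. Once the intervals $[A,u(A)]$ are known to be pairwise disjoint subsets of $2^{[m]}$, their sizes satisfy $\sum_{A\in\mathcal A}2^{|u(A)-A|}\le 2^m$, and Jensen's inequality for the convex function $t\mapsto 2^t$ yields
\[
2^{\frac{1}{n}\sum_{A}|u(A)-A|}\;\le\;\frac{1}{n}\sum_{A}2^{|u(A)-A|}\;\le\;\frac{2^m}{n},
\]
which is exactly~(ii). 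So the single structural lemma you have not located delivers both parts at once; your alternative handles~(i) neatly but does not appear to extend to~(ii) without it.
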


\begin{proof}[Proof of Theorem~\ref{Reimertheorem}]
Applying the previous lemmas we obtain
\begin{align*}
2\sum_{A\in\mathcal A}|A| =&\, 2\sum_{A\in\mathcal A}|u(A)|-2\sum_{A\in\mathcal A}|u(A)-A|\\
\geq &\, m|u(\mathcal A)|+|\eb(u(\mathcal A))|-2\sum_{A\in\mathcal A}|u(A)-A|\\
\geq &\, m|\mathcal A|+|\eb(u(\mathcal A))|-|\eb(u(\mathcal A))|-|\mathcal A|(m-\log_2(|\mathcal A|))\\
=&\,|\mathcal A|\cdot\log_2(|\mathcal A|).
\end{align*}
\end{proof}

Refining Reimer's approach, Balla, Bollob\'as and Eccles 
improve substantially on Nishimura and Takahashi's observation 
that large union-closed families  never pose a counterexample
to Frankl's conjecture. 

\begin{theorem}[Balla, Bollob\'as and Eccles~\cite{BBE13}]\label{thm:Bollobas}
Any union-closed family on $m$ elements
with at least $\lceil\tfrac{2}{3}2^m\rceil$ member-sets
satisfies the union-closed sets conjecture.
\end{theorem}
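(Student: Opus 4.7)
The plan is to show that any union-closed family $\A$ on $[m]$ with $|\A|\geq \lceil \tfrac{2}{3}2^m\rceil$ has average set size at least $\tfrac{m}{2}$; by the double-counting identity~\eqref{eq:double-counting} and pigeonhole this produces an abundant element in~$U(\A)$. Since Reimer's bound $\tfrac{1}{|\A|}\sum_A|A|\geq\tfrac{\log_2|\A|}{2}$ undershoots $\tfrac{m}{2}$ by the additive constant $\tfrac{\log_2(3/2)}{2}$ in the regime $|\A|\geq\tfrac{2}{3}2^m$, the real task is to close exactly this gap.

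Concretely, I would revisit Reimer's up-compression argument and squeeze more out of it under the additional hypothesis that $|\A|$ is large. Writing $\mathcal U = u(\A)$, the identity
\[
2\sum_{A\in\A}|A| \;=\; m|\mathcal U| + |EB(\mathcal U)| - 2\sum_{A\in\A}\bigl(|u(A)|-|A|\bigr)
\]
reduces the target inequality to a sharpening of Reimer's two competing estimates: the bound on the compression cost $\sum|u(A)-A|$ and the bound on the edge boundary $|EB(\mathcal U)|$. When $|\mathcal U|\geq \tfrac{2}{3}2^m$ the complement $2^{[m]}\setminus\mathcal U$ is a down-set of size at most $\tfrac{1}{3}2^m$, so a vertex- or edge-isoperimetric inequality in the Boolean lattice (Harper or Kruskal--Katona type) can be used to force $|EB(\mathcal U)|$ to be large, while a more careful bookkeeping of which sets are actually shifted by~$u$ should improve Reimer's crude estimate $\sum|u(A)-A|\le|\A|(m-\log_2|\A|)$.

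The main obstacle will be attaining the threshold exactly. The Hungarian family of Theorem~\ref{hunthm} realises average set size strictly below $\tfrac{m}{2}$ at size $\lfloor\tfrac{2}{3}2^m\rfloor$, so the refined inequality has to be razor-sharp at the extremal family, and in particular must be sensitive to the parity of $m$ since $\lfloor\tfrac{2}{3}2^m\rfloor$ behaves differently for odd and even $m$. This tightness also suggests that pure averaging is just barely enough and probably needs to be combined with some genuine structural input—most naturally a weighted or iterated variant of compression—to push the improvement past $\lfloor\tfrac{2}{3}2^m\rfloor$ and reach $\lceil\tfrac{2}{3}2^m\rceil$. Once the strengthened inequality $\sum_{A\in\A}|A|\geq\tfrac{m}{2}|\A|$ is secured, the conclusion is immediate: by~\eqref{eq:double-counting} the total frequency $\sum_{u\in[m]}|\A_u|$ is at least $\tfrac{m}{2}|\A|$, and pigeonhole produces an abundant element.
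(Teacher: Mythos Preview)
Your overall strategy is exactly the one the paper attributes to Balla--Bollob\'as--Eccles: prove $\sum_{A\in\A}|A|\geq \tfrac{m}{2}|\A|$ whenever $|\A|\geq\lceil\tfrac{2}{3}2^m\rceil$, then conclude by double-counting. You also correctly identify the toolbox (up-compression plus a Kruskal--Katona type input) and the fact that the Hungarian family is the extremal object.

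Where your plan diverges from the paper's description of the proof is in \emph{organisation}. You propose to revisit Reimer's argument and separately sharpen the edge-boundary lower bound and the compression-cost upper bound, hoping the two improvements close the $\tfrac{1}{2}\log_2(3/2)$ gap. The actual proof is structured around a single clean extremal statement: among \emph{all} union-closed families on $n$ member-sets, the Hungarian family $\hfam$ minimises the total set size $\sum_{A\in\A}|A|$. Once that is established (via up-compression together with Kruskal--Katona), one simply computes the total set size of $\hfam$ and checks that it reaches $\tfrac{m}{2}n$ precisely when $n\geq\lceil\tfrac{2}{3}2^m\rceil$. So rather than treating $\hfam$ as a tightness constraint that your inequalities must respect, you should make ``$\hfam$ is the minimiser'' the theorem you aim for.

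This reframing is not cosmetic. Your plan to bound $|EB(\mathcal U)|$ from below via isoperimetry and $\sum|u(A)-A|$ from above via finer bookkeeping treats the two quantities independently, but in Reimer's identity they interact: both are controlled by the same up-set $\mathcal U=u(\A)$, and the slack in one can absorb slack in the other. Without the extremality statement to anchor the analysis, there is no obvious mechanism forcing the two improvements to add up to exactly the required constant, and your own remark that ``some genuine structural input'' is needed reflects this. The structural input \emph{is} the identification of $\hfam$ as the extremal family; with that in hand, the parity and threshold issues you worry about resolve themselves through direct computation rather than through delicate inequality-balancing.
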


In fact, Balla et al.\ prove that the average frequency of such a family $\mathcal A$ is always
at least $\frac{|\mathcal A|}{2}$. In view of Theorem~\ref{hunthm} this is best possible. 

The key idea of the proof of Theorem~\ref{thm:Bollobas} is to exploit the Kruskal-Katona theorem 
in conjunction with up-compression.
This allows to show that, among all union-closed families on $n$ member-sets, 
the Hungarian family $\hfam$ has minimal total set size. 
Since the total set size of $\hfam$ is large, 
provided that $n\geq \lceil\tfrac{2}{3}2^m\rceil$, 
the double-counting argument~\eqref{eq:double-counting} then yields  an
average frequency that is large enough to imply the union-closed
sets conjecture for the given family.

\medskip

Up-compression, and in particular, the effect of the order in which 
the elements $i$ of the universe are chosen  for the up-compression
is further investigated by Rodaro~\cite{Rod12}.
In a fairly involved article with a heavy algebraic flavour he arrives 
at an upper-bound on the number of basis sets of the union-closed family.
(Recall that a non-empty $B\in\A$ is a basis set if $B=A\cup A'$
for $A,A'\in\A$ implies $A=B$ or $A'=B$.) Rodaro's bound, however, 
is weaker than a result of Kleitman from 1976 on set families that 
are union-free. Cast in the language of basis sets of a union-closed family 
the result becomes:

\begin{theorem}[Kleitman~\cite{Kle76}]
Let $\A$ be a union-closed family on $m$ elements. Then the number of basis sets
is at most
\[
{m \choose \lfloor \frac{m}{2} \rfloor} + \frac{2^m}{m}.
\] 
\end{theorem}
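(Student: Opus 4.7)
The plan is to reduce the statement directly to Kleitman's 1976 bound on union-free families. Recall that a family $\mathcal F \subseteq 2^{[m]}$ is called \emph{union-free} if there are no three \emph{distinct} sets $A,B,C \in \mathcal F$ with $A \cup B = C$; Kleitman's theorem asserts that any such family satisfies $|\mathcal F| \le \binom{m}{\lfloor m/2 \rfloor} + \tfrac{2^m}{m}$.

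The first step is to verify that the collection $\mathcal B \subseteq \A$ of basis sets is itself union-free. Suppose for contradiction that there are three distinct basis sets $B_1, B_2, B_3 \in \mathcal B$ with $B_1 \cup B_2 = B_3$. Since basis sets belong to $\A$, we have $B_1, B_2 \in \A$, and by distinctness $B_1 \neq B_3$ and $B_2 \neq B_3$. (Note $B_1 \neq B_2$ is automatic: otherwise $B_1 = B_1 \cup B_2 = B_3$, contradicting distinctness.) But the defining property of $B_3$ being a basis set demands that $B_3 = X \cup Y$ with $X,Y \in \A$ forces $X=B_3$ or $Y=B_3$, which is violated. Hence $\mathcal B$ is union-free.

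With this reduction in hand, applying Kleitman's theorem to $\mathcal B$ immediately yields $|\mathcal B| \le \binom{m}{\lfloor m/2 \rfloor} + \tfrac{2^m}{m}$, which is exactly the stated bound. In particular, no structural features of $\A$ beyond union-closedness are used; only the extremal property encoded in the definition of a basis set.

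The main obstacle, therefore, is entirely contained in Kleitman's theorem. If one were to reprove it from scratch, the natural route is to isolate the middle layer $\binom{[m]}{\lfloor m/2 \rfloor}$, which contributes at most $\binom{m}{\lfloor m/2 \rfloor}$ sets automatically, and then to control the total contribution from the remaining layers by the error term $\tfrac{2^m}{m}$. This can be attempted via a symmetric chain decomposition of $2^{[m]}$ combined with a counting argument that uses the union-freeness to forbid ``long'' configurations, or alternatively via an LYM-type inequality sharpened by accounting for forbidden union-triples. The delicate point is showing that the additive loss over the middle layer is genuinely of order $\tfrac{2^m}{m}$ rather than a larger antichain-style bound, which is what makes Kleitman's result strictly stronger than the bound obtained by Rodaro through up-compression.
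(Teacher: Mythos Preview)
Your proposal is correct and matches the paper's approach exactly: the paper does not give an independent proof but simply observes that the basis sets of a union-closed family form a union-free family, so Kleitman's 1976 bound on union-free families applies directly. Your verification that $\mathcal B$ is union-free is precisely the content of the phrase ``cast in the language of basis sets,'' and the discussion you add about reproving Kleitman's bound from scratch goes beyond what the paper attempts.
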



While it is not clear how sharp the bound is, 
a family with 
${m \choose \lfloor \frac{m}{2} \rfloor}$ basis sets is easily found: 
simply take all subsets of $2^{[m]}$ of size at least $\lfloor \frac{m}{2} \rfloor$.

\medskip

Up-compression is clearly a powerful concept. So, it seems enticing to apply 
the method in a more direct way to attack Frankl's conjecture: given 
a union-closed family $\A$, choose an element $i$ in its universe and apply
up-compression with respect to $i$, and then reduce the problem to 
the hopefully simpler family $u_i(\A)$. Unfortunately, the up-compressed family $u_i(\A)$
is much too simple with respect to the union-closed sets conjecture: the family satisfies 
it for trivial reasons. Indeed, the element $i$
always appears in at least half of the member-sets of $u_i(\A)$. 

Lo Faro~\cite{LF94b} found a way to circumvent this. Call an element $y$
\emph{dominated by $x$} if $y\in A\in\A$ implies $x\in A$---in other words, 
when $\A_y\subseteq\A_x$. Then we may apply up-compression with respect to $y$
selectively to the sets in $\A_x$. That is, we set 
\[
u'_y(A):=\begin{cases}
A+y&\text{ if $A\in\A_x$ and $A+y\notin \A$}\\
A&\text{ otherwise}.
\end{cases}
\]
The resulting family $\A':=u'_y(\A)$ is still union-closed. Moreover, 
the frequency of $y$ is bounded by the frequency of $x$, which has not 
changed. If $\A'$ satisfies the union-closed sets conjecture 
then this is also the case for the original family $\A$. Thus,
this restricted up-compression allows to force more structure without augmenting
the frequency. While Lo Faro manages to exploit this technique in order 
to obtain a bound on a minimal counterexample it is not clear 
whether it or a variant may be used to a more far-reaching effect.

We note that up-compression is also used by
Leck and Roberts~\cite{LR13} in the context of the union-closed sets conjecture.

\subsection{Generalised averages}

We saw in the previous section that  the Hungarian family $\hfam$ has minimum total set size
among all union-closed families with $n$ member-sets.
Leck, Roberts and Simpson~\cite{LRS12} study a more general set-up, in which they allow 
the set sizes to be weighted. 
For this, they consider non-negative
weight functions $w : 2^{[m]} \to \mathbb{R}_{\ge 0}$ 
that are constant on all sets of the same size. That is, there are reals 
$w_i\geq 0$ so that $w(X)=w_i$ if $|X|=i$, for every $X\subseteq [m]$. 
Moreover, the weights are non-decreasing with $i$, meaning $w_0 \le w_1 \le \ldots \le w_m$.
The \emph{weight} of a non-empty \ucf~$\A$ is then defined as 
$\sum_{A \in \A} w(A)$. 
For example, if $w_i = i$ for all $i \in [0,m]$, then 
$w(\A)$ is just the total set size.

For families generated by $2$-sets, Leck et al.\ managed 
to determine the extremal families. These families turn out to be 
independent of the actual weight. 
In contrast to above, where we used the reverse colex order
we need here the standard colex order:
if $X,Y \subseteq [m]$ are distinct then $X < Y$ if and only if $\max(X \Delta Y) \in Y$.
Then, we define  $\mathcal U_k$ to be the union-closure of the 
first $k$ distinct $2$-sets in the colex order.
For any weight $w$, Leck et al.\ 
calculate the weight of $\mathcal U_k$ to be
\[
\sum_{i=2}^{a+2} \left( {a+1 \choose i} - {a-b \choose i-1} \right) \cdot w_i,
\]
where $a$ and $b$ are any integers such that $0 \le b \le a$ and $k = {a \choose 2} + b$.

\begin{theorem}[Leck, Roberts and Simpson~\cite{LRS12}]\label{thm:Leck}
For every $k$ and every weight~$w$, 
the family $\mathcal U_k$ has minimum weight $w(\mathcal U_k)$
among all \ucfs\ generated by $k$ distinct $2$-sets.
\end{theorem}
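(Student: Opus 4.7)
The plan is to recast the problem in graph-theoretic language and attack it by a compression argument in the spirit of Kruskal--Katona. Identify the $k$ distinct $2$-sets of $\mathcal S$ with the edge set of a graph $G$ on $[m]$. A direct check shows that $\langle\mathcal S\rangle$ consists of $\emptyset$ together with all $T\subseteq[m]$ such that every vertex of $T$ has a neighbour in $G[T]$. Write $N_i(G)$ for the number of such $T$ of size $i$, and set $v_\ell:=w_\ell-w_{\ell-1}\geq 0$. Then Abel summation gives
\[
w(\langle\mathcal S\rangle)=w_0\cdot|\langle\mathcal S\rangle|+\sum_{\ell\geq 1}v_\ell\sum_{i\geq \ell}N_i(G),
\]
so it is enough to prove that, for every $\ell\geq 0$, the colex-initial graph $G^*$ (namely, $K_a$ on $[a]$ together with the $b$ edges $\{i,a+1\}$, $i\in[b]$, where $k=\binom{a}{2}+b$) minimises $\sum_{i\geq \ell}N_i$ over all graphs on $[m]$ with $k$ edges.

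For each pair $u<v$ of vertices, I would introduce the shift operator $S_{uv}$: for every edge $\{v,w\}$ with $w\neq u$ such that $\{u,w\}\notin E(G)$, delete $\{v,w\}$ and add $\{u,w\}$. The edge count is preserved; moreover a graph that is fixed by every shift $S_{uv}$ with $u<v$ is easily classified, and on $k$ edges it must be $G^*$. The heart of the argument is the \emph{shifting lemma}
\[
\sum_{i\geq \ell}N_i(S_{uv}(G))\leq \sum_{i\geq \ell}N_i(G)\qquad\text{for every }\ell\geq 0.
\]
Granted the lemma, iterated application of the shifts drives any $G$ down to $G^*$ without ever increasing the relevant threshold sum. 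The explicit formula for $w(\mathcal U_k)$ then follows by enumerating $N_i(G^*)$: size-$i$ subsets of $[a]$ contribute $\binom{a}{i}$, and size-$i$ subsets of $[a+1]$ containing $a+1$ contribute $\binom{a}{i-1}-\binom{a-b}{i-1}$, giving $\binom{a+1}{i}-\binom{a-b}{i-1}$ in total, matching the statement.

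The main obstacle is the shifting lemma. Applying $S_{uv}$ can both destroy and create closed vertex sets: a set containing $v$ may lose its ``covered'' property as edges are stripped from $v$, while a set containing $u$ may newly acquire it. The natural attack is to build an explicit injection from $\{T\in\langle S_{uv}(G)\rangle:|T|\geq \ell\}$ into $\{T\in\langle G\rangle:|T|\geq \ell\}$ by partitioning sets according to their intersection with $\{u,v\}$. Sets containing both or neither of $u,v$, and those containing $v$ but not $u$, already lie in $\langle G\rangle$. The delicate case is a set $T$ with $u\in T$ and $v\notin T$: the natural image $(T\sm\{u\})\cup\{v\}$ lies in $\langle G\rangle$ unless a former neighbour of $v$ in $G$ was shifted to become a neighbour only of $u$, so that $v$ is uncovered in that image. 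Handling this case cleanly, via a compensating swap whose targets do not collide with the images of the easy cases, is where I expect most of the combinatorial bookkeeping to sit; once it is in place, the rest of the argument is routine.
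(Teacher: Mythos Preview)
The paper is a survey and does not prove Theorem~\ref{thm:Leck}; it merely quotes the result from Leck, Roberts and Simpson~\cite{LRS12}. So there is no ``paper's proof'' to compare against, and your plan must be judged on its own merits.

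There is a genuine gap. Your claim that ``a graph that is fixed by every shift $S_{uv}$ with $u<v$ \ldots\ on $k$ edges must be $G^*$'' is false. Take $k=3$ and the star with edges $\{1,2\},\{1,3\},\{1,4\}$. This graph is fixed by every $S_{uv}$: the only edge incident with any $v\in\{2,3,4\}$ is $\{1,v\}$, and for $u<v$ with $u\neq 1$ the required edge $\{u,1\}$ is already present. Yet $G^*$ for $k=3$ is the triangle on $[3]$, a non-isomorphic graph. More generally, left-compressed families of $2$-sets (equivalently, threshold graphs with vertices labelled by degree) are far from unique; the colex initial segment is just one of them. In your star example the union-closure has $8$ members, whereas $\mathcal U_3$ has only $5$, so the theorem is fine but your shifting cannot reach $G^*$ from the star.

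Consequently, even granting your shifting lemma, iterated compression only lands you in the class of left-compressed (threshold) graphs, and you still owe the heart of the argument: that among \emph{those}, $G^*$ minimises every tail sum $\sum_{i\geq\ell}N_i$. This is the analogue of the non-trivial second step in the compression proof of Kruskal--Katona, and it is entirely absent from your plan. One way forward is to introduce, in addition to the vertex shifts $S_{uv}$, an \emph{edge}-level compression that replaces a colex-largest edge by a colex-smaller missing one whenever the tail sums do not increase; but showing that such a move is always available when the edge set is not the colex initial segment is exactly the substantive combinatorics you have not yet supplied.

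A secondary point: your sketch of the shifting lemma is honest about the delicate case ($u\in T$, $v\notin T$) but does not actually resolve it. The ``compensating swap'' you allude to can collide with images from the easy cases, and your description gives no mechanism to prevent this. This needs a careful injection, not a hand-wave. That said, the more serious structural hole is the one above: fix the classification error first.
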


A partial result of this had already been proved 
by Imrich, Sauer and Woess~\cite{ISW93}, 
first mentioned in their technical report~\cite{ISW88},
which showed that
any \ucf~$\A$ that is generated by basis sets of size~$2$, 
has an average set size of at least $\tfrac{1}{2}|U(\A)|$.

\medskip
As we observed in Section~\ref{seclimitsofaveraging}, 
averaging does not always succeed, that is, 
the arithmetic mean of the frequencies is sometimes too low to conclude 
that the union-closed sets conjecture holds for a given family. 
For some families, such as the Hungarian family discussed above, this is because 
there is one or perhaps a few elements with very low frequency. 
Those elements might be so rare that, on the whole, the average frequency
drops below the Frankl threshold of half of the  member-sets.

One way to overcome this obstacle is to use a different mean 
than the arithmetic mean, one that de-emphasises the weight of  
extremely rare outliers. This approach has been pursued by Duffus and Sands~\cite{DS99}.
While they consider a quasi-arithmetic mean for the lattice formulation, 
we present here the equivalent form in the set formulation.
In particular, Duffus and Sands pose the question whether there is a $c>1$ so that 
\begin{equation}\label{DSmean}
\frac{1}{|U|}\sum_{u\in U}c^{|\A_u|}\geq c^{\frac{|\A|}{2}}
\end{equation}
for all union-closed families $\A$ 
with universe $U$. Clearly,~\eqref{DSmean} would imply the union-closed 
sets conjecture. As evidence, Duffus and Sands prove that
the lattice version of~\eqref{DSmean}
holds for distributive lattices when $c=4$.

\bigskip
While~\eqref{DSmean} seems quite enticing,  
a new idea is needed to make this, or some other, generalised average work. 
Indeed, it is no longer obvious how the main advantage of the averaging approach
can be exploited, namely that the frequencies are analysed \emph{indirectly} via the 
set sizes. In the case of distributive lattices, Duffus and Sands could 
investigate the individual 
frequencies $|\A_u|$ to arrive at their result. In general, this will not be possible.
For, if it was, then there would be no need to consider a quasiarithmetic mean (or of any 
other kind), as one could immediately exhibit an abundant element.


\subsection{Families of minimum density}

Rather than averaging the frequencies over the whole universe, we may hope to gain more
by restricting the range of the average, for example to the elements 
of the smallest member-set. 
This approach 
was developed by W\'ojcik~\cite{Woj92} and followed up by Balla~\cite{Bal11}.

Define $s_k$ to be the largest real so that for any union-closed family $\A$
and any $k$-set $S$ in $\A$ it holds that 
\begin{equation}\label{eq:WojcikBound}
\frac{1}{|S|}\sum_{u\in S}|\A_u|\geq s_k|\A|.
\end{equation}
The first $10$ values have been determined exactly by W\'ojcik; 
we list here the first five: 
$s_1=\tfrac{1}{2}$, $s_2=\tfrac{1}{2}$, $s_3=\tfrac{4}{9}$, $s_4=\tfrac{2}{5}$
and $s_5=\tfrac{9}{25}$. So, in particular, any $5$-set in any union-closed 
family will always contain an element that appears in at least a third of the 
member-sets.

Somewhat surprisingly, the value $s_k$ coincides with the 
so-called minimal \emph{density} of a family on $k$ elements:
\begin{theorem}[W\'ojcik~\cite{Woj92}]\label{thm:Wojcik}
For every $k\in\mathbb N$ it holds that 
\[
s_k=\min_{\A}\frac{1}{k|\A|} \cdot \sum_{u\in U(\A)}|\A_u|,
\]
where the minimum ranges over all union-closed families $\A$
with $|U(\A)|=k$.
\end{theorem}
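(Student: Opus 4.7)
Denote the right-hand side of the claimed equality by $d_k$; we need to prove $s_k=d_k$.

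For the direction $s_k\le d_k$, let $\B$ be a union-closed family with $|U(\B)|=k$ attaining the minimum $d_k$. Because $\B$ is union-closed and nonempty, the set $S:=U(\B)=\bigcup_{B\in\B}B$ is a member of $\B$, obtained as an iterated pairwise union of members, and $|S|=k$. So $(\B,S)$ is a pair of the form admitted in the definition of $s_k$, and inequality~\eqref{eq:WojcikBound} applied to it reads $d_k=\frac{1}{k|\B|}\sum_{u\in S}|\B_u|\ge s_k$.

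For the direction $s_k\ge d_k$, fix a union-closed family $\A$ and a $k$-set $S\in\A$; the task is to show that $\frac{1}{k|\A|}\sum_{u\in S}|\A_u|\ge d_k$. Grouping members by their trace on $S$, set $m_T:=|\{A\in\A:A\cap S=T\}|$ for $T\subseteq S$ and $\B:=\{T:m_T\ge 1\}=\{A\cap S:A\in\A\}$. Then $\B$ is union-closed on universe $S$ (since $S$ is its own trace), so $|U(\B)|=k$ and therefore $\frac{1}{k|\B|}\sum_T|T|\ge d_k$ by the very definition of $d_k$. A double-counting gives $|\A|=\sum_T m_T$ and $\sum_{u\in S}|\A_u|=\sum_T|T|m_T$, so the density of $\A$ on $S$ equals the $m_T$-weighted mean of $|T|/k$ over $\B$ while the density of $\B$ is the unweighted mean; it remains to show that the weighted mean is also $\ge d_k$.

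The plan for this last step is induction on $|U(\A)\sm S|$. The base case $U(\A)=S$ is immediate since $\A$ is then itself a UCF on the $k$-element universe $S$, whose density is $\ge d_k$. For the inductive step, pick $x\in U(\A)\sm S$ and write $\A=\A_{\overline x}\sqcup\A_x$. Both pieces are union-closed, and the density of $\A$ on $S$ is the $(|\A_{\overline x}|,|\A_x|)$-weighted mediant of the densities of $\A_{\overline x}$ and $\A_x$ on $S$. Since $S\in\A_{\overline x}$ and its universe is a proper subset of $U(\A)$, the inductive hypothesis gives that the density of $\A_{\overline x}$ on $S$ is $\ge d_k$. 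The main obstacle is $\A_x$: passing to the isomorphic union-closed family $\{A\sm\{x\}:A\in\A_x\}$ on $U(\A)\sm\{x\}$ preserves frequencies on $S$ and reduces the universe size, but this family contains $S$ only if $S\cup\{x\}\in\A$. When this happens the induction closes the proof; when it fails, one must argue directly, exploiting the injections $\A_T\hookrightarrow\A_S$, $A\mapsto A\cup S$, which show $m_T\le m_S$ for every $T\in\B$ and so impose a dominance of the large trace $T=S$ that keeps the weighted mean above the unweighted one.
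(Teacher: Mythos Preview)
The direction $s_k\le d_k$ is fine, and your inductive setup for $s_k\ge d_k$ is reasonable up to the point where you hit the case $S\cup\{x\}\notin\A$. There, however, your fallback claim is wrong: the inequalities $m_T\le m_S$ do \emph{not} force the $m_T$-weighted mean of $|T|$ over $\B$ to dominate the unweighted mean. A concrete counterexample is
\[
\A=\{\emptyset,\,4,\,45,\,1,\,14,\,145,\,1245,\,123,\,1234,\,12345\},\qquad S=\{1,2,3\}.
\]
This family is union-closed, $S\in\A$, and the trace family is the chain $\B=\{\emptyset,1,12,123\}$ with multiplicities $(m_\emptyset,m_1,m_{12},m_{123})=(3,3,1,3)$. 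All $m_T\le m_S=3$, yet the weighted mean of $|T|$ is $14/10=1.4$, strictly below the unweighted mean $6/4=1.5$. So the step ``dominance of $T=S$ keeps the weighted mean above the unweighted one'' fails as stated, and the induction does not close.

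A clean repair bypasses the induction entirely. Write $\D^T:=\{A\setminus S:A\in\A,\,A\cap S=T\}$; your injection $A\mapsto A\cup S$ shows $\D^T\subseteq\D^S$ for every $T\in\B$. Now reverse the order of summation: for each $D\in\D^S$ set
\[
\B_D:=\{T\subseteq S:\,T\cup D\in\A\}=\{T\in\B:\,D\in\D^T\}.
\]
Each $\B_D$ is union-closed (if $T_i\cup D\in\A$ for $i=1,2$ then $(T_1\cup T_2)\cup D\in\A$) and contains $S$, hence has universe $S$ and density at least $d_k$. Since every $A\in\A$ is uniquely $T\cup D$ with $T=A\cap S\in\B$ and $D=A\setminus S\in\D^T\subseteq\D^S$, one gets
\[
\sum_{u\in S}|\A_u|=\sum_{A\in\A}|A\cap S|
=\sum_{D\in\D^S}\sum_{T\in\B_D}|T|
\;\ge\; d_k\,k\sum_{D\in\D^S}|\B_D|
= d_k\,k\,|\A|,
\]
which is exactly the desired inequality. (The paper itself does not give a proof of this theorem, merely attributing it to W\'ojcik; the argument above is the natural one.)
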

We mention that we have reversed here definition and consequence, as 
W\'ojcik defines the $s_k$ as minimal densities but then proves the 
equivalence to~\eqref{eq:WojcikBound}.

W\'ojcik conjectured and Balla proved that:

\begin{theorem}[Balla~\cite{Bal11}]\label{thm:Balla}
For all $k$, $s_k \ge \frac{\log_2 k}{2k}$.
\end{theorem}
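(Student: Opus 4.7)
The plan is based on reducing the claim to a Reimer-style lower bound on the total set size. By W\'ojcik's Theorem~\ref{thm:Wojcik}, the inequality $s_k \geq \frac{\log_2 k}{2k}$ is equivalent to
\[
\sum_{u\in U(\A)}|\A_u|\geq\tfrac{1}{2}|\A|\log_2 k
\]
holding for every union-closed family $\A$ with $|U(\A)|=k$; using the double-counting identity~\eqref{eq:double-counting}, this is the same as
\[
\sum_{A\in\A}|A|\geq\tfrac{1}{2}|\A|\log_2 k.
\]
I would aim to establish this inequality.

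Two immediate lower bounds on $\sum_{A\in\A}|A|$ are at hand. Reimer's Theorem~\ref{Reimertheorem} gives $\sum_{A\in\A}|A|\geq\tfrac{1}{2}|\A|\log_2|\A|$, which already suffices whenever $|\A|\geq k$. Since every element of the universe lies in some member-set, we also have the trivial bound $\sum_{A\in\A}|A|=\sum_{u}|\A_u|\geq k$, which suffices whenever $|\A|\leq 2k/\log_2 k$. What remains is the intermediate range
\[
\tfrac{2k}{\log_2 k}<|\A|<k,
\]
in which neither bound alone is strong enough.

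In this intermediate range my plan is to argue by induction on $|\A|$: remove a well-chosen basis set $B\in\A$, apply the inductive hypothesis to the smaller union-closed family $\A\setminus\{B\}$ (ideally still on universe $[k]$), and show that $|B|$ is large enough so that $\sum_{A\in\A}|A|=\sum_{A\in\A\setminus\{B\}}|A|+|B|$ recovers the required strengthening. A complementary tactic, when no basis set is large, is to use the structural constraints union-closure places on low-frequency elements: all elements of frequency one lie in a common member-set (else the union of their witnessing sets would violate frequency one), and analogous restrictions bound the number of elements of any small fixed frequency. This sharpens the crude bound $\sum_u|\A_u|\geq k$ by accounting for elements forced to have higher frequency, narrowing the gap with Reimer's bound.

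The main obstacle is precisely the intermediate range, where neither of the two natural lower bounds is individually tight. Smoothly interpolating between them appears to require a genuinely new combinatorial input that exploits the union-closure condition in the regime where the number of member-sets is comparable to the universe size, and identifying the right inductive invariant -- or, alternatively, the right structural dichotomy splitting the intermediate regime -- is where I expect the real work of the proof to lie.
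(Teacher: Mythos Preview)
Your reduction via W\'ojcik's Theorem~\ref{thm:Wojcik} to the inequality $\sum_{A\in\A}|A|\geq\tfrac{1}{2}|\A|\log_2 k$ for union-closed $\A$ with $|U(\A)|=k$, together with the observation that Reimer's Theorem~\ref{Reimertheorem} settles the case $|\A|\geq k$, is exactly the right framework and matches what the paper says: Reimer's theorem is indeed the main step. You are also right that the intermediate range $\tfrac{2k}{\log_2 k}<|\A|<k$ is not covered by either Reimer or the trivial bound $\sum_u|\A_u|\geq k$; that gap is genuine.

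Where your proposal falls short is in the plan for closing this gap. Inducting on $|\A|$ by deleting a basis set $B$ requires $|B|\geq\tfrac{1}{2}\log_2 k$ \emph{and} that the universe does not shrink; neither is guaranteed (in $2^{[r]}\cup\{[k]\}$, for instance, the only basis set whose removal preserves the universe is a singleton). The frequency-one observation is correct but does not supply the needed quantitative gain. A cleaner route is to induct on $k$ rather than on $|\A|$: whenever $|\A|<k$ the family cannot be separating (a separating union-closed family on $k$ elements has at least $k$ member-sets), so there exist $i\neq j$ with $\A_i=\A_j$. Deleting $j$ from every member-set yields a union-closed family of the same cardinality $n$ on a universe of size $k-1$, and the inductive step then only demands
\[
|\A_j|\;\geq\;\tfrac{n}{2}\log_2\tfrac{k}{k-1}\;\leq\;\tfrac{k-1}{2}\log_2\tfrac{k}{k-1}\;\leq\;\tfrac{1}{2\ln 2}<1,
\]
which holds automatically since $|\A_j|\geq 1$. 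So the intermediate range is in fact easy once one inducts on the right parameter; your proposed inductions were aimed at the wrong one.
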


The main step in the proof is an application of Reimer's theorem.
As W\'ojcik~\cite{Woj92} indicated, this lower bound is asymptotically optimal.
To see this, consider the family $2^{[r]} \cup [k]$, where $r=\lceil \log_2 k \rceil$,
and observe that its 
density is $(1+o(1)) \frac{\log_2 k}{2k}$.
Note, however, that this family is not separating.

Combining Theorems~\ref{thm:Wojcik} and~\ref{thm:Balla}, 
Balla arrives at a lower bound on the maximum frequency in terms of the size of the universe.

\begin{corollary}[Balla~\cite{Bal11}]
In every \ucf~on $m \ge 16$ elements and $n$ sets there is an 
element contained in at least $\sqrt{\tfrac{\log_2 m}{m}} \cdot \tfrac{n}{2}$ many 
member-sets.
\end{corollary}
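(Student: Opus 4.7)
The plan is to combine the density bound of Theorems~\ref{thm:Wojcik} and~\ref{thm:Balla} with the elementary double-counting identity $\sum_u |\A_u| = \sum_A |A|$, trading the two estimates off against one another at a carefully chosen threshold for the minimum non-empty member-set size.

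Let $k^*$ denote the size of a smallest non-empty member-set of $\A$, and fix $S \in \A$ with $|S| = k^*$. Applying Theorem~\ref{thm:Wojcik} to $S$ and invoking $s_{k^*} \geq \log_2(k^*)/(2k^*)$ from Theorem~\ref{thm:Balla} produces an element of frequency at least $\tfrac{\log_2 k^*}{2k^*} \cdot n$. On the other hand, since every non-empty member has at least $k^*$ elements, double counting gives an element of frequency at least $k^* n / m$; by removing $\emptyset$ from $\A$ first if necessary (which decreases $n$ by one but leaves the maximum frequency untouched) I may assume this cleanly. Writing $M$ for the maximum frequency, this yields
\[
M \geq \frac{\log_2 k^*}{2k^*} \cdot n \quad\text{and}\quad M \geq \frac{k^* n}{m}.
\]

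Next I balance these bounds at the threshold $k_0 := \tfrac{1}{2}\sqrt{m \log_2 m}$: the first bound decreases in $k^*$ once $k^* \geq 3$, while the second increases. If $k^* \geq k_0$, the averaging bound alone yields $M \geq k_0 n / m = \tfrac{n}{2}\sqrt{\log_2 m / m}$, as required. Otherwise $k^* < k_0$; either $k^* \in \{1,2\}$, in which case $s_{k^*} = \tfrac{1}{2}$ already gives the stronger $M \geq n/2$, or $k^* \geq 3$ and the function $f(x) := \log_2 x / x$ is decreasing, so $f(k^*) \geq f(k_0)$. A direct calculation gives
\[
f(k_0) = \frac{\log_2 m + \log_2 \log_2 m - 2}{\sqrt{m \log_2 m}},
\]
which exceeds $\sqrt{\log_2 m / m}$ precisely when $\log_2 \log_2 m \geq 2$, i.e.\ when $m \geq 16$; the density bound then delivers $M \geq \tfrac{n}{2}\sqrt{\log_2 m / m}$ in this case too.

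The main obstacle is purely numerical: the condition $\log_2 \log_2 m \geq 2$ needed for the density case to succeed at the threshold $k_0$ is exactly the hypothesis $m \geq 16$, and indeed at $m = 16$ one has $k_0 = 4$ and both bounds simultaneously match the target $n/4$, showing that the choice of threshold is tight. Handling a possible $\emptyset \in \A$ in the averaging step is a secondary nuisance, but is absorbed by the preprocessing remark above since the maximum frequency is insensitive to the empty set.
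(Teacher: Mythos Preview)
Your argument is correct and is the natural fleshing-out of the paper's hint: the survey itself gives no proof beyond ``combining Theorems~\ref{thm:Wojcik} and~\ref{thm:Balla}'', and what you do---pair the density bound $s_{k^*}\ge \log_2 k^*/(2k^*)$ for a smallest non-empty member against the elementary averaging bound $M\ge k^*n/m$, then balance at $k_0=\tfrac12\sqrt{m\log_2 m}$---is exactly the intended computation. Your verification that the numerical condition $\log_2\log_2 m\ge 2$ coincides with the hypothesis $m\ge 16$, and your separate treatment of $k^*\in\{1,2\}$ via the exact values $s_1=s_2=\tfrac12$, are both accurate. One cosmetic point: what you actually invoke for the first bound is the defining inequality~\eqref{eq:WojcikBound} for $s_k$, not the density characterisation in Theorem~\ref{thm:Wojcik}; the latter is really the tool behind Theorem~\ref{thm:Balla}.

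The only genuine (if minor) slip is in your handling of $\emptyset$. Removing $\emptyset$ does leave the maximum frequency $M$ unchanged, but it also replaces $n$ by $n-1$ in the \emph{target} bound, so your preprocessing establishes only $M\ge\tfrac{n-1}{2}\sqrt{\log_2 m/m}$ for families containing $\emptyset$, not the stated inequality with $n$. Your closing remark that this is ``absorbed'' because $M$ is insensitive to $\emptyset$ is therefore not quite right: the frequency is insensitive, but the claim is not. The defect is at most $\tfrac12\sqrt{\log_2 m/m}<\tfrac12$, so integrality of $M$ together with a slightly sharper use of the averaging step (the universe $U(\A)$ is always a member of size $m$, which strengthens the total-size estimate beyond $k^*(n-1)$) closes the gap; but this deserves a sentence rather than being waved away.
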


\section{Further results}\label{furtherresults}

Sarvate and Renaud~\cite{SR89} observed that if the \ucc~holds for 
\ucfs~on $n$ sets, 
$n$ odd, then it holds for \ucfs~with $n+1$ sets.
In particular, $n_0$ is odd.
Lo Faro~\cite{LF94b} and later Roberts and Simpson~\cite{RSi10} proved $n_0 \ge 4m_0-1$.
As discussed earlier, this result 
turns out to be very useful for families on few sets.\sloppy

Another result in this direction is given by Norton and Sarvate~\cite{NS93}: 
any counterexample with $n_0$ sets contains at least three distinct elements of frequency exactly $\tfrac{n_0-1}{2}$.
Other necessary properties of counterexamples were given by 
Lo Faro~\cite{LF94,LF94b} and Dohmen~\cite{Doh01}.

\medskip
Peng, Sissokho and Zhao~\cite{PSZ12} study 
what they call the \emph{half-life} of set families.
Given a set family $\B$ that is not necessarily union-closed,
they consider the family $\bigcup^k \B$ defined as the family of unions of at most $k$ sets of $\B$.
The half-life of $\B$ is then the least $k$ such that $\bigcup^k \B$
satisfies the assertion of the \ucc.

\section{Extremal frequency}\label{exConway}

Any induction proof of the union-closed sets conjecture will likely 
necessitate a strengthened induction hypothesis coupled with structural
insight on those families with low maximum frequencies. Let us therefore
look at the minimal maximum element frequency a family on a given number
of sets may have.

For a union-closed family $\A$ define $\phi(\A)$ to be the maximum 
frequency of an element of the universe, that is,
\[
\phi(\A)=\max_{u\in U(\A)} |\A_u|.
\] 
Let $\phi(n)$ be the minimum over all $\phi(\A)$, where $\A$ is 
a union-closed family of $n\geq 2$ member-sets. Clearly, this allows the
trivial reformulation of the \ucc\ as:
\begin{conjecture}
$\phi(n)\geq\tfrac{n}{2}$ for all integers $n\geq 2$.
\end{conjecture}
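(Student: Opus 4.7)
Since this conjecture is literally the union-closed sets conjecture recast in the notation of $\phi(n)$, any proof proposal is necessarily speculative. The plan I would try is to split the argument according to the density $|\A|/2^m$ and treat each regime with a different technique from those surveyed. For dense families with $|\A|\ge \lceil \tfrac{2}{3}2^m\rceil$, Theorem~\ref{thm:Bollobas} of Balla, Bollob\'as and Eccles produces an abundant element directly. For sparse families with $|\A|\le 2m$ (after reducing to the separating case as discussed in Section~\ref{sec:elementary}), Theorem~\ref{falgasbound2} finishes. All of the difficulty is concentrated in the intermediate regime $2m<|\A|<\tfrac{2}{3}2^m$.

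For the intermediate regime I would attempt induction on $n=|\A|$ using the basis sets $\B$ of $\A$. Pick a basis set $B\in\B$ of minimum size: the family $\A\setminus\{B\}$ is again union-closed with $n-1$ members. If some $u\in B$ is abundant in $\A\setminus\{B\}$, then $u$ is still abundant in $\A$, since its frequency increases by $1$ while $|\A|$ grows by $1$. The task therefore reduces to ruling out the case that the abundant elements of $\A\setminus\{B\}$ all lie outside $B$. To force an abundant element into $B$, I would try to guarantee that $\A$ contains a Frankl-complete subfamily supported on $B$ in the sense of Theorem~\ref{poonenlocal}, exploiting the catalogues of FC-families assembled by Poonen, Vaughan, Morris and Mari\'c et al.\ together with Vaughan's result on three $3$-sets with common intersection. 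To preserve the induction hypothesis when this fails, I would apply the restricted up-compression of Lo~Faro from Section~\ref{secupcompression}: shifting along a dominated element only makes the family more structured without raising any frequency above the maximum of $\A$, so the compressed family lies in the same regime and has a strictly simpler basis.

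The hard part is exactly the obstacle that the survey highlights repeatedly: we lack a conjectured structure for tight extremal families. Poonen's construction with an unabundant minimum $3$-set (Section~\ref{sec:hardness}) shows that the abundant element cannot be pinned down inside the smallest basis set, so the induction step above is unjustified in its most natural form; no known FC-theorem delivers what is needed on the chosen~$B$. The Hungarian family (Theorem~\ref{hunthm}) saturates the averaging method precisely at the density $\tfrac{2}{3}2^m$ where Theorem~\ref{thm:Bollobas} already bites, leaving no slack in the intermediate regime for a purely averaging-based reduction. Consequently any honest attack must begin by strengthening Poonen's Conjecture~\ref{powconj} into a full structural classification of union-closed families with $\phi(\A)$ close to $|\A|/2$; without such a classification the induction cannot be closed, and this is exactly the gap that has kept the conjecture open for more than four decades.
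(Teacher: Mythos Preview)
The statement you were asked to prove is not proved in the paper: it is precisely the union-closed sets conjecture, restated in terms of the function $\phi$, and the paper is a survey of this \emph{open} problem. There is therefore no paper proof to compare against.

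You correctly recognise this at the outset, and your proposal is not really a proof attempt but an honest sketch of how the known partial results tile the parameter space together with a candid explanation of why they do not cover it. That is appropriate. Your identification of the obstacles is accurate and well sourced from the survey: Theorem~\ref{thm:Bollobas} handles the dense regime, Theorem~\ref{falgasbound2} the very sparse separating regime, and the intermediate regime is exactly where all current techniques fail. Your remark that Poonen's construction blocks the naive induction on basis sets, and that the Hungarian family shows averaging has no slack below density $\tfrac{2}{3}\cdot 2^m$, matches the paper's own discussion in Sections~\ref{sec:hardness} and~\ref{seclimitsofaveraging}.

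One small correction of emphasis: you write that Lo~Faro's restricted up-compression ``makes the family more structured without raising any frequency above the maximum of $\A$'' and that ``the compressed family \ldots has a strictly simpler basis''. The first clause is right, but the second is not established anywhere and is not obviously true; up-compression can create new basis sets as easily as it destroys them, so this does not furnish a well-founded induction. Since you already concede that the induction cannot be closed, this does not change your overall conclusion, but the specific mechanism you propose for progress in the compressed family is itself unjustified.
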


In this way, the union-closed sets conjecture becomes a problem about
an integer sequence.
What can be said about this sequence $\phi(n)$? For instance, that it
is a slowly growing sequence:

\begin{lemma}[Renaud~\cite{Ren91}]
$\phi(n-1)\leq\phi(n)\leq\phi(n-1)+1$ for all $n\geq 2$.
\end{lemma}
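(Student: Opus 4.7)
The plan is to establish the two inequalities by simple one-set modifications of a family realising $\phi$.

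For the upper bound $\phi(n)\leq\phi(n-1)+1$, I would start with a union-closed family $\A$ on $n-1$ member-sets with $\phi(\A)=\phi(n-1)$. The idea is to adjoin a single new set to $\A$ so that exactly one element's frequency jumps by one. Pick a fresh element $z\notin U(\A)$ and put
\[
\A' \;=\; \A\cup\{U(\A)+z\}.
\]
Then $\A'$ is union-closed: for any $A\in\A$ the union $A\cup(U(\A)+z)$ equals $U(\A)+z$ and is back in $\A'$, while unions inside $\A$ already lie in $\A$. Since $U(\A)+z\notin\A$, we have $|\A'|=n$. Frequencies are easy to track: every $u\in U(\A)$ satisfies $|\A'_u|=|\A_u|+1$ (the set $U(\A)+z$ contains each such $u$), and $|\A'_z|=1$. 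Consequently $\phi(\A')=\phi(\A)+1=\phi(n-1)+1$, so $\phi(n)\leq\phi(n-1)+1$.

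For the lower bound $\phi(n-1)\leq\phi(n)$, I would take a union-closed family $\A$ on $n$ member-sets with $\phi(\A)=\phi(n)$ and delete one carefully chosen member-set. By the observation quoted earlier in the paper, $\A\setminus\{B\}$ is union-closed precisely when $B$ is a basis set or $B=\emptyset$. Since $n\geq 2$, such a $B$ certainly exists: if $\emptyset\in\A$ we may take $B=\emptyset$, and otherwise any inclusionwise minimal non-empty member-set is a basis set (if it were the union of two strict members of $\A$, both would be empty or smaller, contradicting minimality). Setting $\A'=\A\setminus\{B\}$ gives a union-closed family on $n-1$ sets, and removing a single member can only decrease frequencies, so $\phi(\A')\leq\phi(\A)=\phi(n)$, whence $\phi(n-1)\leq\phi(n)$.

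The argument is essentially book-keeping, so there is no serious obstacle. The two points that require a line of justification are (i) verifying that the enlarged family $\A\cup\{U(\A)+z\}$ is indeed union-closed, which is immediate from $U(\A)+z$ being a maximum element absorbing all other members, and (ii) exhibiting a basis set or empty set to remove in the lower bound, which is handled by the minimal non-empty member-set argument above together with the paper's earlier characterisation of removable sets.
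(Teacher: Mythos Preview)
Your proof is correct. The paper itself does not supply a proof of this lemma (it is merely stated and attributed to Renaud), so there is nothing to compare against; your argument via adding $U(\A)+z$ for the upper bound and deleting a basis set (or $\emptyset$) for the lower bound is exactly the kind of elementary book-keeping one would expect, and all the checks you flag (union-closedness of the enlarged family, existence of a removable member) go through as you describe.
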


Renaud\footnote{
We point out here that our sequence $\phi(n)$ equals 
Renaud's~\cite{Ren91} $\phi(n-1)$.
} used the lemma to compute the first $17$ values of $\phi(n)$.
We put $\phi(1)=1$ so that the sequence starts from $n=1$ on:
\begin{equation}\label{phiseq}
1,1,2,2,3,4,4,4,5,6,7,7,8,8,8,8,9,10,...
\end{equation}

Moreover, if the union-closed sets conjecture is true, then 
$\phi(n)=\tfrac{n}{2}$ if $n$ is a power of two, and 
$\phi(n)>\tfrac{n}{2}$ otherwise, provided 
Poonen's conjecture (Conjecture~\ref{powconj}) is valid as well.

Now, there is a well-known slowly growing integer sequence
that coincides with $\phi(n)$
on  the initial segment~\eqref{phiseq} and that, in addition, 
has $a(n)=\tfrac{n}{2}$ if and only if $n$ is power of two.
This is 
Conway's challenge sequence, defined by
$a(1)=a(2)=1$
and the recurrence relation 
\[
a(n)=a(a(n-1))+a(n-a(n-1)).
\]
See, for instance, Kubo and Vakil~\cite{KB96} for background
on the sequence. 

As Mallows~\cite{Mal91} proved that $a(n)\geq\tfrac{n}{2}$
for all $n\geq 1$, it seems tempting to seek a deeper relation 
between $\phi(n)$ and $a(n)$, and in some sense there is one. 
Renaud and Fitina construct, for every $n$, a union-closed family
whose maximum element frequency is exactly equal to $a(n)$. We discuss
this construction next.

\newcommand{\rfo}{<}
\newcommand{\rfoq}{\leq}
\newcommand{\rf}[1]{\mathcal R(#1)}

Let us define an order $\rfo$ on $\mathbb N^{(<\omega)}$, the set of finite 
subsets of $\mathbb N$, by first sorting by largest element, then 
by decreasing cardinality and finally by colex order.
Thus, 
$A\rfo B$ if
\begin{itemize} \itemsep1pt \parskip0pt 
\item $\max A<\max B$; or
\item $\max A=\max B$ but $|A|>|B|$; or
\item $\max A=\max B$ and $|A|=|B|$
but $\max(A\Delta B)\in B$
\end{itemize}
Omitting parentheses and commas this yields 
\begin{align*}
\emptyset \rfo 1 \rfo 12 \rfo 2 \rfo 123 \rfo 13 \rfo 23 \rfo 3 \rfo 1234 \rfo 124 &\\
\rfo 134 \rfo 234 \rfo 14 \rfo 24 \rfo 34 \rfo 4 \rfo 12345 \rfo ...
\end{align*}
as initial segment. It is easy to see that $A\rfoq C$ and $B\rfoq C$ implies
$A\cup B\rfoq C$, which means that the first $n$ sets of this order form a union-closed
family, denoted by $\rf{n}$. 

\begin{theorem}[Renaud and Fitina~\cite{RF93}]
For every $n\geq 2$,
the most frequent element of the Renaud-Fitina family $\rf{n}$
has frequency $a(n)$, that is, 
\[
\phi(n)\leq a(n).
\]
\end{theorem}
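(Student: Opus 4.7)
The plan is to establish, by induction on $n$, the stronger claim that $\phi(\mathcal{R}(n))=a(n)$, with element $1$ always attaining the maximum frequency. This breaks into three tasks: (a) verify that $\mathcal{R}(n)$ is union-closed, (b) show that $1$ maximises the frequency, and (c) compute that frequency as $a(n)$.

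For (a), the excerpt already remarks that $A\rfoq C$ and $B\rfoq C$ imply $A\cup B\rfoq C$, so the only interesting case is $\max A=\max B=\max C$, settled by a short case analysis on cardinalities. I would then exploit the dyadic structure of $\rfo$: for $2^{k-1}<n\le 2^k$ and $m:=n-2^{k-1}$, the family decomposes as $\mathcal{R}(n)=2^{[k-1]}\cup\mathcal{N}_m$, where $\mathcal{N}_m$ consists of the first $m$ subsets of $[k]$ with maximum element $k$, listed by decreasing cardinality and, within each cardinality, by colex. In particular the new element $k$ has frequency exactly $m$, and the map $A\mapsto A\setminus\{k\}$ sends $\mathcal{N}_m$ bijectively onto an initial segment of the auxiliary order ``decreasing cardinality, then colex'' on $2^{[k-1]}$.

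For (b), every $i\in[k-1]$ has frequency $2^{k-2}$ inside $2^{[k-1]}$, so the contest is decided by $\mathcal{N}_m$. Since the colex order favours sets containing small labels but is not preserved by transposition of labels, one must argue cardinality-by-cardinality that among the first $\ell$ sets of any given size in $\mathcal{N}_m$ the element $1$ appears at least as often as any other $i\in[k-1]$. Together with the crude bound $m\le 2^{k-1}\le 2^{k-2}+|\mathcal{R}(n)_1\cap\mathcal{N}_m|$, this also dominates the frequency of $k$, so $1$ is indeed a maximum-frequency element of $\mathcal{R}(n)$.

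The main obstacle is (c). Setting $g(n):=|\mathcal{R}(n)_1|$, I would prove $g(n)=a(n)$ by strong induction matching Conway's recurrence $a(n)=a(a(n-1))+a(n-a(n-1))$ against a recursive split of $\mathcal{R}(n)$: the dyadic base case $g(2^k)=2^{k-1}=a(2^k)$ is immediate, but for the inductive step one must identify a natural subfamily of $\mathcal{R}(n)$ of size $a(n-1)$ whose contribution to the frequency of $1$ is $a(a(n-1))$, while the complement contributes $a(n-a(n-1))$. This self-similar decomposition of the Renaud--Fitina order --- reproducing inside $\rfo$ a Mallows-style structural description of Conway's sequence --- is the technical heart of the proof; once it is in place, the induction closes by a direct computation.
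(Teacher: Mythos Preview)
The survey does not contain a proof of this theorem; it merely states the result and cites Renaud and Fitina's original paper. So there is no in-paper proof to compare against, and I can only assess your proposal on its own merits.

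Your architecture is sound---one does prove the stronger claim $\phi(\mathcal R(n))=a(n)$, and reducing to the frequency of element~$1$ is the right move---but there is a genuine gap in part~(c). You correctly flag it as the technical heart, yet you do not carry it out: you merely assert that \emph{some} subfamily of $\mathcal R(n)$ of size $a(n-1)$ with $1$-frequency $a(a(n-1))$ should exist, without exhibiting it or explaining why. That decomposition \emph{is} the theorem; until it is produced, nothing has been proved. And matching the Conway recurrence directly is not the obvious route. The natural dyadic split $\mathcal R(n)=2^{[k-1]}\cup\mathcal N_m$ does not have the required shape (in general $a(n-1)\neq 2^{k-1}$), and the $1$-frequency inside $\mathcal N_m$, as a function of $m$, is \emph{not} $a(m)$: for $k=4$ one gets the sequence $1,2,3,3,4,4,4,4$ rather than $1,1,2,2,3,4,4,4$. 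So whatever self-similar decomposition works, it is not the one your outline gestures at. A more tractable route is to compute $g(n)=|\mathcal R(n)_1|$ in closed form from the layered structure of $\rfo$ and match it against a known explicit description of $a(n)$ (Mallows, or Kubo--Vakil), bypassing the recurrence entirely.

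A smaller point: your ``crude bound'' $m\le 2^{k-1}\le 2^{k-2}+|\mathcal R(n)_1\cap\mathcal N_m|$ is wrong as written---the second inequality fails whenever $m$ is small. The correct argument is that $|\mathcal R(n)_1\cap\mathcal N_m|\ge m/2$ (via the colex injection you already have in~(b)), whence the frequency of~$1$ is at least $2^{k-2}+m/2\ge m$ since $m\le 2^{k-1}$.
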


So, is $\phi(n)$ always equal to $a(n)$? By Mallows' result, that would 
clearly prove
the union-closed sets conjecture. Unfortunately, this is not the case. 
In a subsequent paper, Renaud~\cite{Ren95} described families $\mathcal B(n)$ 
whose element frequency is sometimes strictly smaller than Conways' challenge
sequence. This happens for the first time at $n=23$, where $a(n)=14$.
However, no element in the family 
\[
\mathcal B(23)=2^{[4]}\cup\{12345,1235,1245,1345,2345,125,345\}
\]
on $23$ member-sets appears more often than $13$ times. 
We omit the precise construction of $\mathcal B(n)$ but mention that 
it only differs from $\rf{n}$ in the last step, when we delete 
sets of the same size of the power set $2^{[m]}$. There the sets
to delete are chosen in a more balanced way, so that the frequency
of the elements $1,\ldots, m-1$ differs by at most one. 
 
Renaud determines the maximum frequency as follows. Let 
\[
n=2^m-\sum_{i=0}^{r-1}{m-1\choose i}-v,
\]
where $0\leq r < m-1$ and $0\leq v<{m-1\choose r}$. Then
\[
\phi(\mathcal B(n))=2^{m-1}-\sum_{i=0}^{r-2}{m-1\choose i}
-\left\lfloor\frac{rv}{m-1}\right\rfloor
\]
Furthermore, he shows that always $\phi(\mathcal B(n))\leq a(n)$. 
Are the families $\mathcal B(n)$ now truly extremal, that is 
$\phi(n)=\phi(\mathcal B(n))$ for all $n$? Again, this is not 
the case. Renaud gives the example of the family
\[
\mathcal C=2^{[6]}\sm\{6,5,16,25,36,45,136,245\},
\]
in which the most frequent element appears in $30$ member-sets. 
However, in $\mathcal B(56)$ there is an element of frequency~$31$. 

\medskip
To conclude,  we do not know much, in general, about the structure
of an extremal family, nor are there any convincing candidates. 
The only exception are power sets $\mathcal P$, for which holds
$\phi(\mathcal P)=\phi(|\mathcal P|)$,
provided the union-closed sets conjecture is true. 
Nevertheless, the examples in this section seem to indicate
that an extremal family would have relatively few elements compared
to the number of member-sets: let us call a family on 
$n$ member-sets and a universe of size $m$ \emph{compact}
if $2^{m-1}<n\leq 2^m$. For example, 
power sets, the Renaud-Fitina families as well as the Hungarian families
are compact.

\begin{question}
Is it true that for a union-closed family $\A$  
it follows from $\phi(\A)=\phi(|\A|)$ that $\A$ is compact?
\end{question}

An affirmative answer would be a major step towards the union-closed sets
conjecture. Indeed, Reimer's bound~\eqref{Reimeravg} in conjunction with
Theorem~\ref{m=12}  gives:
\begin{observation}
Any compact union-closed family $\A$ 
contains an element that is contained in at least 
$\tfrac{6}{13}|\A|$ member-sets.
\end{observation}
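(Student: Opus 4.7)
The plan is to combine two ingredients already established in the paper: Theorem~\ref{m=12}, which settles the full \ucc~for $|U(\A)|\leq 12$, and Reimer's averaging bound~\eqref{Reimeravg}, which gives strong control once $\log_2 |\A|$ is comparable to $|U(\A)|$. The compactness hypothesis $2^{m-1} < |\A| \leq 2^m$ translates to $\log_2 |\A| > m-1$, and this is precisely what makes Reimer's bound kick in at the threshold where Theorem~\ref{m=12} stops applying.

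I would split on the size $m$ of the universe. If $m\leq 12$, then Theorem~\ref{m=12} immediately supplies an element of frequency at least $|\A|/2 \geq \tfrac{6}{13}|\A|$, and there is nothing more to do. If $m\geq 13$, I would apply Reimer's bound. Writing $n=|\A|$, estimate~\eqref{Reimeravg} gives
\[
\frac{1}{m} \sum_{u\in U(\A)} |\A_u| \ \geq\ \frac{n\log_2 n}{2m}.
\]
Compactness yields $\log_2 n > m-1$, so the right-hand side strictly exceeds $\frac{n(m-1)}{2m}$. For $m\geq 13$ one has $\frac{m-1}{m}\geq \frac{12}{13}$, hence the average frequency is at least $\frac{6n}{13}$. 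Since the maximum frequency is at least the average, some $u\in U(\A)$ satisfies $|\A_u|\geq \tfrac{6}{13}|\A|$, as required.

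There is really no major obstacle here: the argument is an immediate assembly of the two ingredients. The only thing to verify is that the numerical cut-off matches, and it does so exactly. The inequality $\frac{m-1}{2m}\geq \frac{6}{13}$ is equivalent to $m\geq 13$, which dovetails precisely with the range left open by Theorem~\ref{m=12}. Thus the argument slots together with no slack, and the constant $\tfrac{6}{13}$ in the statement is determined by the current state of Theorem~\ref{m=12}; any future improvement from $m_0=12$ to larger $m_0$ would immediately upgrade $\tfrac{6}{13}$ to $\tfrac{m_0}{2(m_0+1)}$ by the same proof.
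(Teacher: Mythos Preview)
Your proof is correct and follows exactly the route indicated in the paper: split on whether $m\leq 12$ (handled by Theorem~\ref{m=12}) or $m\geq 13$ (handled by Reimer's averaging bound~\eqref{Reimeravg} together with compactness), with the numerical threshold matching precisely. The paper merely names these two ingredients without spelling out the computation, so your write-up is a faithful fleshing-out of the intended argument.
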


While
we have arrived at the end of this survey, the union-closed sets conjecture
 still has a bit of a journey ahead of it. We hope it will 
be an exciting trip.
\section*{Acknowledgement}

We are grateful for the 
extensive bibliography of Markovi\'c~\cite{Mar07} that was of great help
for our own literature research. 
We thank Bela Bollob\'as,
Dwight Duffus, Peter Frankl, Tomasz \L uczak, Ian Roberts, Jamie Simpson,
Peter Winkler
and David Yost for their input on the history of the conjecture and for 
help in tracking down seemingly lost items of the literature.
We thank Eric Balandraud for inspiring discussions about the Hungarian family.
Finally, we thank the referee who pointed us to the result of Kleitman
in Section~\ref{secupcompression}, and observed that Knill's graph-generated families form lower semimodular lattices.

\small
\bibliographystyle{amsplain}
\bibliography{ucsbib}

\vfill
\small
\vskip2mm plus 1fill
\noindent
Version 25 Oct 2013
\bigbreak

\noindent
Henning Bruhn
{\tt <henning.bruhn@uni-ulm.de>}\\
Universit\"at Ulm, Germany\\[3pt]
Oliver Schaudt
{\tt <schaudto@uni-koeln.de>}\\
Institut f\"ur Informatik\\
Universit\"at zu K\"oln\\
Weyertal 80\\
Germany

\end{document}